\documentclass[a4paper,11pt]{article}
%\usepackage{showkeys} 
% -------------------------------------------------
% Set up the page margins.
% -------------------------------------------------
% \usepackage[text={6.4in,8.4in},centering]{geometry}
\usepackage[margin=1in]{geometry}
\setlength{\topmargin}{-0.25in}
 
% -------------------------------------------------
% Load Packages here
% -------------------------------------------------
\usepackage{graphicx,url,subfig}
\RequirePackage[OT1]{fontenc} 
\RequirePackage{amsthm,amsmath,amssymb,amscd}
\RequirePackage[numbers]{natbib}
\RequirePackage[colorlinks,citecolor=blue,urlcolor=blue]{hyperref}
\usepackage{enumerate}
\usepackage{datetime}
\usepackage{etex}
\usepackage{rotating}

\usepackage{bbm}

\makeatother
\numberwithin{equation}{section} 
\allowdisplaybreaks
 
% -------------------------------------------------
% check references
% -------------------------------------------------
% \usepackage{refcheck}
% \usepackage[notref,notcite]{showkeys}
% \usepackage[notcite]{showkeys}
% \usepackage{showkeys}

% -------------------------------------------------
% Environments here
% -------------------------------------------------
\theoremstyle{plain}
\newtheorem{theorem}{Theorem}[section]
\newtheorem{corollary}[theorem]{Corollary}
\newtheorem{lemma}[theorem]{Lemma}
\newtheorem{proposition}[theorem]{Proposition}

\theoremstyle{definition}
\newtheorem{definition}[theorem]{Definition}

\newtheorem{assumption}[theorem]{Assumption}

\newtheorem{remark}[theorem]{Remark}

\newtheorem{example}[theorem]{Example}

% -------------------------------------------------
%  Define short hand symbols.
% -------------------------------------------------
\newcommand{\E}{\mathbb{E}}

\newcommand{\ud}{\ensuremath{\mathrm{d}}}

\newcommand{\sgn}{\text{sgn}}

\newcommand{\Norm}[1]{\left|\left|  #1   \right|\right|}

\newcommand{\InPrd}[1]{\left\langle #1 \right\rangle}

\newcommand{\sprt}{\operatorname{supp}}

\newcommand{\calB}{\mathcal{B}}

\newcommand{\calF}{\mathcal{F}}

\newcommand{\calN}{\mathcal{N}}

\newcommand{\bbF}{\mathbb{F}}
\newcommand{\bbN}{\mathbb{N}}
\newcommand{\bbZ}{\mathbb{Z}}

\newcommand*{\one}{{{\rm 1\mkern-1.5mu}\!{\rm I}}}

\newcommand{\bbP}{\mathbb{P}}

\newcommand{\R}{\mathbb{R}}
\newcommand{\Z}{\mathbb{Z}}

\newcommand{\RR}{\mathbb{R}}

\newcommand{\sign}{\text{sign}}

\DeclareMathOperator{\Lip}{Lip}
\DeclareMathOperator{\LIP}{Lip}

\DeclareMathOperator{\Vip}{\overline{\varsigma}}

% %------------------------------------------------------
% \title{
% Comparison principle for stochastic heat equation on $\R^d$
% }
% \author{
% {\bf Le Chen}
% \quad and\quad   {\bf Jingyu Huang}
% % \quad {\bf Kunwoo Kim}
% % \footnote{Research supported in part by the NSF grants DMS-1307470.}
% \date{\vspace{0em}\small \today}
% % \date{}
% }

%%%%%%%%%%%%%%%%%%%%%%%%%%%%%%%%%%%%%%%%%%%%%%
%%%%% HEAD: Book-Begin
%%%%%%%%%%%%%%%%%%%%%%%%%%%%%%%%%%%%%%%%%%%%%%

\title{Stochastic comparisons for stochastic heat equation  
}
% \runtitle{A Sample Document}
% \thankstext{T1}{Footnote to the title with the ``thankstext'' command.}

% \author{Le Chen \and
% \and Kunwoo Kim}
% \title{Decorrelation of total mass via energy}
% \date{Last update:  September 7, 2014}
%
\author{Le Chen\\Emory University\\
Atlanta, Georgia, USA.\\
\texttt{le.chen@emory.edu}\\
\and
Kunwoo Kim\footnote{Research supported by the NRF (National Research Foundation of Korea) grant 2017R1C1B1005436.}\\Pohang University of Science\\ and Technology, Korea\\\texttt{kunwoo@postech.ac.kr}}
\date{\small \today}
% 	 
% \author{
% {\bf Le Chen} \; and {\bf Kunwoo Kim\footnote{Research supported by the NRF (National Research Foundation of Korea) grant 2017R1C1B1005436.}  }
% % \footnote{Research supported in part by the NSF grants DMS-1307470.}
% \\[1em]
% %University of Utah
%  \date{\vspace{-1em}\small \today}
% %\date{}
% }

%%%%%%%%%%%%%%%%%%%%%%%%%%%%%%%%%%%%%%%%%%%%%%
%%%%% HEAD: Book-Begin
%%%%%%%%%%%%%%%%%%%%%%%%%%%%%%%%%%%%%%%%%%%%%%

\begin{document}
\maketitle
\vspace{-2em}
\begin{center}
\begin{minipage}[rct]{5 in}
\footnotesize \textbf{Abstract:}
We establish the stochastic comparison principles, including moment comparison principle as a special case, for solutions to the following nonlinear stochastic heat equation on $\mathbb{R}^d$
\[
\left(\frac{\partial }{\partial t} -\frac{1}{2}\Delta \right) u(t,x) =  \rho(u(t,x))
\:\dot{M}(t,x),
\]
where $\dot{M}$ is a spatially homogeneous Gaussian noise that is white in time and colored in space, and $\rho$ is a Lipschitz continuous function that vanishes at zero. These results are obtained for rough initial data and under Dalang's condition, namely,  
$\int_{\mathbb{R}^d}(1+|\xi|^2)^{-1}\hat{f}(\text{d} \xi)<\infty$,
where $\hat{f}$ is the spectral measure of the noise. We establish the comparison principles by comparing  either  the diffusion coefficient $\rho$ or the correlation function of the noise $f$. As corollaries, we obtain Slepian's inequality for SPDEs and SDEs. 

\vspace{2ex}
\textbf{MSC 2010 subject classifications:}
Primary 60H15. Secondary 60G60, 35R60.

\vspace{2ex}
\textbf{Keywords:}
Stochastic heat equation, parabolic Anderson model, infinite dimensional SDE, spatially homogeneous noise, stochastic comparison principle, moment comparison principle, Slepian's inequality for SPDEs,  rough initial data.
\vspace{4ex}
\end{minipage}
\end{center}

% \tableofcontents
\setlength{\parindent}{1.5em}

%%%%%%%%%%%%%%%%%%%%%%%%%%%%%%%%%%%%%%%%%%%%%%
%%%%% MAIN: The chapters of the thesis
%%%%%%%%%%%%%%%%%%%%%%%%%%%%%%%%%%%%%%%%%%%%%%

% \mainmatter
% \graphicspath{{../figs/}}
{\hypersetup{linkcolor=black}
\tableofcontents}

\section{Introduction}\label{S:Intro}
In this paper, we study the {\em stochastic comparison principle} (see Definition \ref{D:StochComp}) including {\it moment comparison principle} for the solutions to the following stochastic heat equation (SHE)
\begin{align}\label{E:SHE}
\begin{cases} 
\displaystyle \left(\frac{\partial }{\partial t} -
\frac{1}{2}\Delta \right) u(t,x) =  \rho(u(t,x))
\:\dot{M}(t,x),&
x\in \R^d,\; t>0, \\
\displaystyle \quad u(0,\cdot) = \mu(\cdot).
\end{cases}
\end{align}
In this equation, $\rho$ is assumed to be a globally Lipschitz continuous function with 
\begin{align}\label{E:rho0=0}
\rho(0)=0.
\end{align}
The linear case, i.e., $\rho(u)=\lambda u$, is called 
the {\em parabolic Anderson model} (PAM) \cite{CarmonaMolchanov94}.
The noise $\dot{M}$ is a Gaussian noise that is white in time and homogeneously colored in space.
Informally,
\[
\E\left[\dot{M}(t,x)\dot{M}(s,y)\right] = \delta_0(t-s)f(x-y),
\]
where $\delta_0$ is the Dirac delta measure with unit mass at zero and $f$ is a nontrivial ``correlation function/measure''
i.e., a nonnegative and nonnegative definite function/measure that is not identically zero \footnote{In the following, the terminology ``{\it correlation function}'' should be understood in the generalized sense, i.e., it refers a function  -- the Radon–Nikodym derivative -- when $f$ is absolutely continuous with respect to the Lebesgue measure; otherwise, it refers to a genuine measure.}. 
The Fourier transform of $f$, which is again a nonnegative and nonnegative definite measure and is usually called  the {\it spectral measure}, is denoted by $\hat{f}$
\[
\hat{f}(\xi)= \calF f (\xi) =\int_{\R^d}\exp\left(- i \: \xi\cdot x \right)f(x)\ud x.
\] 
The SPDE \eqref{E:SHE} is understood in its integral form, i.e., {\it the mild solution}, 
\begin{align}\label{E:mild}
u(t,x) = \int_{\R^d} G(t,x-y)\mu(\ud y) + \int_0^t\int_{\R^d} G(t-s,x-y) \rho(u(s,y))M(\ud s, \ud y),
\end{align}
where $G(t,x)$ is the heat kernel function  
\begin{align}\label{E:HeatKernel}
G(t,x) := (2\pi t)^{-d/2}\exp\left(-|x|^2/(2t)\right),
\end{align}
and the stochastic integral is in the sense of Walsh \cite{Dalang99,Walsh}.

\bigskip
We are interested in the stochastic comparison principles for \eqref{E:SHE}, 
which should not be confused with the {\it sample path comparison principle} \cite{CH18Comparison,CK14Comp,Mueller91, Shiga} where one compare solutions for the same equation but with different and comparable initial conditions. We consider 
under either one of the following two scenarios:
\begin{enumerate}[(S-1)]  
 \item  Let $u_1$ and $u_2$ be two solutions to \eqref{E:SHE} with the same (nonnegative) initial data and the same noise but with different diffusion coefficients, namely, $\rho_1$ and $\rho_2$, respectively.
 Assume that 
 \begin{gather*}
\text{either} \quad \rho_1(x)\ge \rho_2(x)\ge 0\quad
\text{or} \quad
\rho_1(x)\le \rho_2(x)\le 0\quad
\text{for all $x\ge 0$.}
\end{gather*}
\item   Let $u_1$ and $u_2$ be two solutions to \eqref{E:SHE} with the same (nonnegative) initial data and the same diffusion coefficient, but with different correlation functions, namely, $f_1$ and $f_2$, respectively.
Assume that
\[
f_1 \geq f_2\quad
\text{(i.e., $f_1-f_2$ is a nonnegative measure).}
\] 
\end{enumerate}
We plan to work under weakest possible conditions on \eqref{E:SHE}, which include {\it rough initial data} and {\it Dalang's condition} on $f$. Let us  explain these two conditions in more details. 
We first note that by the Jordan decomposition, 
any signed Borel measure $\mu$ can be decomposed as $\mu=\mu_+-\mu_-$ where
$\mu_\pm$ are two non-negative Borel measures with disjoint support. Denote $|\mu|:= \mu_++\mu_-$. 
The rough initial data refers to any signed Borel measure $\mu$ such that
\begin{align}\label{E:J0finite}
\int_{\R^d} e^{-a |x|^2} |\mu|(\ud x)
<+\infty\;, \quad \text{for all $a>0$}\;,
\end{align}
where $|x|=\sqrt{x_1^2+\dots+x_d^2}$ denotes the Euclidean norm.
It is easy to see that the condition \eqref{E:J0finite} is equivalent to the condition
that the solution to the homogeneous equation -- $J_0(t,x)$ defined in \eqref{E:J0} below -- exists for all $t>0$ and $x\in\R^d$.
Existence and uniqueness of a random field solution for rough initial conditions are recently established in \cite{CK15SHE} (see also \cite{CH18Comparison} and \cite{Huang}) under Dalang's condition \cite{Dalang99}, i.e.,
\begin{align}\label{E:Dalang}
\Upsilon(\beta):=(2\pi)^{-d}\int_{\R^d} \frac{\hat{f}(\ud \xi)}{\beta+|\xi|^2}<+\infty \quad \text{for some and hence for all $\beta>0$;}
\end{align}
% see Definition \ref{D:Solution} below for the precise meaning of the solution.
Dalang's condition \eqref{E:Dalang} is the weakest condition for the correlation function $f$ in order to have a random field solution (in the sense of Definition \ref{D:Solution}). 
Throughout this paper, we will assume that $\mu$ is a nonnegative measure.

\bigskip 
Instead of presenting our results in full details, which will be done in Section \ref{S:MainRes}, let us first take a look of several examples. 
Under Dalang's condition and for rough initial data, for either one of the above two scenarios (S-1) or (S-2), we have the following comparison results:
\begin{enumerate}[(E-1)]
 \item ({\it Moment comparison principle}) For $m$ arbitrary space-time points $(t_\ell,x_\ell)\in (0,\infty)\times\R^d$ (not necessarily to be distinct) and $m$ integers $k_\ell\in\bbN$, $\ell=1,\cdots,m$, it holds that 
 \begin{align}\label{E:MomComp}
 \E\left[\prod_{\ell=1}^m u_1^{k_\ell}(t_\ell,x_\ell)\right]
 \ge 
 \E\left[\prod_{\ell=1}^m u_2^{k_\ell}(t_\ell,x_\ell)\right].
\end{align} 
\item For any $(t,x)\in(0,\infty)\times\R^d$, $c> 0$ and any integer $n\ge 1$, it holds that
\begin{align}\label{E:C5}
\E\left(\left[u_1(t,x)-c\right]^{2n}\right)
\ge 
\E\left(\left[u_2(t,x)-c\right]^{2n}\right).
\end{align}
In particular, by choosing $c=J_0(t,x)$ (see \eqref{E:J0} below), \eqref{E:C5} tells us that all {\it central moments} of even orders can be compared. 
When $n=1$, this is a comparison result for the variances. 
\item 
For $m$ arbitrary space-time points $(t_\ell,x_\ell)\in (0,\infty)\times\R^d$ (not necessarily to be distinct) and $m$ integers $k_\ell\in\bbN$, $\ell=1,\cdots,m$, it holds that 
\begin{align}\label{E:CompCoordinate}
\E\left[\prod_{\ell=1}^m g_\ell^{k_\ell}(u_1(t_\ell,x_\ell))\right]
 \ge 
 \E\left[\prod_{\ell=1}^m g_\ell^{k_\ell}(u_2(t_\ell,x_\ell))\right],
\end{align}
where $g_\ell(z)$ can be any of the following functions 
\[
\exp\left(-\lambda_\ell z\right),\quad \frac{1}{(1+z)^{c_\ell}},\quad\text{or}\quad \log\left(\frac{z+a_\ell}{z+b_\ell}\right)\qquad\text{with $\lambda_\ell>0$, $a_\ell>b_\ell>0$ and $c_\ell\ge 1$.}
\]
\item Statement in \eqref{E:CompCoordinate} is true with $g_\ell(z)$ being either of the following two functions:
\[
x^{b_\ell}[\log(c_\ell+ x)]^{a_\ell}
\quad\text{or} \quad x^{d_\ell},\qquad \text{with $a_\ell, b_\ell,d_\ell\ge  1$ and $c_\ell \ge e$.}
\]
\item For any $m\ge 1$, $t_m>\cdots >t_1>0$,  $k_1,\cdots k_m\in\bbN\setminus\{0\}$, $\alpha_1,\cdots,\alpha_m\in [2,\infty)$, and $x_{j}^\ell\in\R^d$ with $\ell=1,\cdots,m$ and $j=1,\cdots,k_\ell$ such that $\left\{x_{k_1}^\ell,\cdots,x_{k_\ell}^\ell\right\}$ are distinct points for each $\ell$, 
\begin{multline}\label{E:C4}
\E\left(
\prod_{\ell=1}^m \left[u_1^2\left(t_\ell,x_{1}^\ell\right)+\cdots+ u_1^2\left(t_\ell,x_{k_\ell}^\ell\right)\right]^{\frac{\alpha_\ell}{2}}
\right)\\
\ge 
\E\left(
\prod_{\ell=1}^m \left[u_2^2\left(t_\ell,x_{1}^\ell \right)+ \cdots+u_2^2\left(t_\ell,x_{k_\ell}^\ell \right)\right]^{\frac{\alpha_\ell}{2}}
\right).
\end{multline}
\end{enumerate} 

Note that \eqref{E:C5} is not a special case of \eqref{E:MomComp} when $n\ge 2$. The oscillatory nature caused by the negative one makes \eqref{E:C5} non-trivial.
One more example, that is slightly different from the above ones, is the following  Slepian's inequality for SPDEs:
\begin{enumerate}
 \item[(E-6)] {\it (Slepian's inequality for SPDEs)} Under the scenario (S-2), if $f_1$ and $f_2$ are {\it equal} to each other near the origin (see the precise meaning in Corollary \ref{C:SPDE:Slepian} below), then for all $a>0$, $t>0$, and $x_1,\cdots, x_N\in\R^d$, 
 \begin{equation}\label{E:SPDE:Slepian2}
\bbP \left\{ \max_{1\leq k \leq N} u_1(t, x_k) \leq a  \right\} \geq \bbP \left\{ \max_{1\leq k \leq N} u_2(t, x_k) \leq a  \right\}. 
\end{equation}
\end{enumerate}

% \bigskip
For the parabolic Anderson model (PAM), it is well known that the moments enjoy the Feynman-Kac representation, based on which one can obtain very sharp estimates for the moments. The literature is vast and we refer the interested readers to Xia Chen's papers \cite{XChen15,XChen19}
and references therein.  One may also check the work by Borodin and  Corwin \cite{BC14Moment} where the $p$-th moment is represented by some multiple contour integrals.  Using the sharp estimates of the moments for PAM, intermittent phenomena (i.e., the solution develops tall peaks on small \emph{islands} of many different scales), have been studied extensively, e.g., see \cite{CarmonaMolchanov94, FK13SHE} for the definition and analysis of intermittency in terms of moments and also  \cite{KKX1, KKX2, Kim} for the study on intermittency based on the macroscopic multi-fractal analysis.   However, whenever $\rho$ is nonlinear or whenever the functionals go beyond the moments functionals, much fewer tools are available. The stochastic comparison results of the above kinds, including moment comparison principle, play a fundamental role in this setting. 
\bigskip

When the noise is additive, i.e., $\rho(u)=\text{constant}$, the moment comparison principle --- Case (E-1) --- under the second scenario (S-2) comes from Isserlis' theorem \cite{Isserlis} since the solution is a Gaussian random field whose distribution is determined by the spatial correlation function $f$. 
On the other hand, to the best of our knowledge, the comparison principle including the moment comparison principle under the second scenario is new for  \eqref{E:SHE} with the condition \eqref{E:rho0=0}.  As for the first scenario, the moment comparisons principle --- Case (E-1) --- has been studied recently. 
In \cite{JKM17}, Joseph, Khoshnevisan and Mueller proved one-time comparison of \eqref{E:MomComp} for 
the one-dimensional case, i.e., $d=1$, 
with space-time white noise $f=\delta_0$, and $t_1=\cdots=t_m$, which was later generalized by Foondun, Joseph and  Li in 
\cite{FJL18} to the multiple-time comparison of the form \eqref{E:MomComp} in the higher dimensional case $d\ge 1$ with the {\it Riesz 
kernel} 
\begin{align}
 f(x)=|x|^{-\beta} \qquad \text{with}\quad  \beta\in (0,2\wedge d).
\end{align}
It is easy to see that the Riesz kernel with the range of $\beta$ specified above satisfies Dalang's condition \eqref{E:Dalang}.
In both \cite{JKM17} and \cite{FJL18}, the initial conditions are assumed to be 
the Lebesgue measure $\mu(\ud x) =\ud x$. 
We will generalize these results to cover rough initial data and all possible correlation functions under Dalang's condition \eqref{E:Dalang}. Moreover, we will cover many other functionals other than moment functionals in Case (E-1).
The approximation results in Sections \ref{S:Aprox} and \ref{SS:Aprox2} below are interesting by themselves, where we use different approximation procedures which produce  strong solutions in this paper  rather than mild solutions as in \cite{JKM17, FJL18}.  We believe  strong solutions are more straightforward and easier to handle when showing  approximations.

\subsection{Statement of the main results}\label{S:MainRes}
In order to state our main results, we first need to introduce some notation.
We first note that under our assumptions, namely, $\rho(0)=0$ and the initial data $\mu$ being nonnegative, the solutions to \eqref{E:SHE} are nonnegative (see \cite{CH18Comparison,CK14Comp} and also Theorem \ref{T:WeakComp} below). Hence, all function spaces in Definition \ref{D:Functions} have their domains in $\R_+^m$ for some $m\ge 1$. 

\begin{definition}\label{D:Functions}
For $m\ge 1$, let $C^{2,v}\left(\R_+^m;\R_+\right)$ be the set of nonnegative  functions on $\R_+^m$ having continuous second order partial derivatives and  all second order partial derivatives are nonnegative.  Let $C_b^{2,v}\left(\R_+^m;\R_+\right)$ be the set of functions in $C^{2,v}\left(\R_+^m;\R_+\right)$ such that all partial derivatives of orders $0$, $1$ and $2$ are bounded.
Let $C_p^{2,v}\left(\R_+^m;\R_+\right)$ be the set of functions in $C^{2,v}\left(\R_+^m;\R_+\right)$ such that the gradient has at most some polynomial growth, namely, $f\in C_p^{2,v}(\R_+^m;\R_+)$, then there exists some constant $C>0$ and $k\in\bbN$ such that
 \begin{align}\label{E:GrowthCp}
 |\bigtriangledown f (z)| \le C(1+|z|^k),\quad\text{for all $z\in\R_+^m$.}
 \end{align}
Let $C_{-}^{2,v}\left(\R_+^m;\R_+\right)$ (resp. $C_{+}^{2,v}\left(\R_+^m;\R_+\right)$) be the set of functions in $C^{2,v}\left(\R_+^m;\R_+\right)$ such that all first derivatives are non-positive (resp. nonnegative) and set 
$C_{\pm}^{2,v}\left(\R_+^m;\R_+\right):=C_{+}^{2,v}\left(\R_+^m;\R_+\right)\cup C_{-}^{2,v}\left(\R_+^m;\R_+\right)$. 
Similarly, one can define 
\begin{align*}
&C_{b,-}^{2,v}\left(\R_+^m;\R_+\right),\quad
C_{b,+}^{2,v}\left(\R_+^m;\R_+\right),\quad
C_{b,\pm}^{2,v}\left(\R_+^m;\R_+\right),\quad \text{and}\\
&C_{p,-}^{2,v}\left(\R_+^m;\R_+\right),\quad 
C_{p,+}^{2,v}\left(\R_+^m;\R_+\right),\quad 
C_{p,\pm}^{2,v}\left(\R_+^m;\R_+\right).
\end{align*}
\end{definition}

\begin{definition}
\label{D:Cones}
Let $K$ be the spatial index set, which could be either $\R^d$ or $\bbZ^d$ or a finite set $\{0,\cdots,d\}$. 
Let $\mathbb{F}^K[C^{2,v}]$ denote the set of finite-dimensional nonnegative functions of twice continuously differentiable functions, namely,
\begin{align}
\begin{aligned}
\bigcup_{m=1}^{|K|}
\mathop{\mathop{\bigcup_{x_\ell\in K:}}_{\ell=1,\cdots, m,}}_{x_i\ne x_j,\: i\ne j}
\bigg\{ f: \R_+^K \mapsto \R_+: 
\exists\:  g\in C^{2,v}(\R_+^m;\R_+)\:\: \text{s.t.}\:\: 
f(z)=g\left(z(x_1),\cdots, z(x_m)\right) \bigg\}
\end{aligned}
\label{E:F}
\end{align}
where $|K|$ is the cardinality of the index set $K$, which is equal to $d$ for $K=\{1,\cdots, d\}$ and $\infty$ when there is countably or uncountable many elements in $K$.
In the same way, one can define 
\begin{align}\label{E:Cones}
\mathbb{F}^K[C_{+}^{2,v}],\quad 
\mathbb{F}^K[C_{-}^{2,v}],\quad
\mathbb{F}^K[C_b^{2,v}],\quad 
\mathbb{F}^K[C_{b,+}^{2,v}],\quad 
\mathbb{F}^K[C_{b,-}^{2,v}],
\quad 
\mathbb{F}^K[C_p^{2,v}],\quad 
\mathbb{F}^K[C_{p,+}^{2,v}].
\end{align}
Let $\mathbb{F}_M$ and $\mathbb{F}_L$ denote the set of {\it moment and Laplace functions}, i.e.,  
\begin{align}\label{E:FM}
\mathbb{F}_M^K:= & \bigcup_{m=1}^{|K|}
\mathop{\mathop{\bigcup_{(k_\ell,x_\ell)\in\bbN\times K:}}_{\ell=1,\cdots, m,}}_{x_i\ne x_j,\: i\ne j}
\left\{ f: \R_+^K \mapsto \R_+: f(z)=z(x_1)^{k_1}\cdots z(x_m)^{k_m}\right\},\\
\mathbb{F}_L^K:= & \bigcup_{m=1}^{|K|}
\mathop{\mathop{\bigcup_{(\lambda_\ell,x_\ell)\in\R_+\times K:}}_{\ell=1,\cdots, m,}}_{x_i\ne x_j,\: i\ne j}
\left\{ f: \R_+^K \mapsto \R_+: f(z)=\exp\left( -\sum_{\ell=1}^m \lambda_\ell \: z(x_\ell)\right) \right\}.
\label{E:FL}
\end{align}
\end{definition}

When there is no ambiguity from the context, we often omit the superscript $K$ for these function spaces.

\begin{remark}
In \cite{CFG96}, $\mathbb{F}[C^{2,v}]$ and any one in \eqref{E:Cones} are {\it function cones} because we will see latter that they are preserved under the certain semigroup operations (and/or multiplication). In contrast, $\mathbb{F}_M$ and $\mathbb{F}_L$ are not cones in that sense.   
It is clear that these sets of functions satisfy the following inclusion relations:
\begin{align}\label{E:Rel-Fs}
\begin{array}{ccccccc}
  &&\bbF[C_+^{2,v}] & \subseteq & \bbF[C_{\pm}^{2,v}] & \subseteq & \bbF[C^{2,v}]\\[0.2em]
  &&\text{\begin{rotate}{90}$\subseteq$\end{rotate}}&& \text{\begin{rotate}{90}$\subseteq$\end{rotate}}
 & &\text{\begin{rotate}{90}$\subseteq$\end{rotate}}\\
 \bbF_M &\subseteq & \mathbb{F}[C_{p,+}^{2,v}]& \subseteq & \bbF[C_{p,\pm}^{2,v}] & \subseteq & \bbF[C_p^{2,v}]\\[0.2em]
  &&&& \text{\begin{rotate}{90}$\subseteq$\end{rotate}}
 & &\text{\begin{rotate}{90}$\subseteq$\end{rotate}}\\
 \bbF_L &\subseteq & \mathbb{F}[C_{b,-}^{2,v}]&\subseteq & \bbF[C_{b,\pm}^{2,v}] & \subseteq & \bbF[C_b^{2,v}]
\end{array}
\end{align}
% By modifying the $f$ in \eqref{E:FL} slightly, for example $f(z)=\exp\left( -\sum_{\ell=1}^m \lambda_\ell \: (z(x_\ell)-a_\ell)^2\right)$ with some $a_\ell>0$, one finds examples that belong to $\bbF[C_b^{2,v}]$ but not to $\bbF[C_{b,\pm}^{2,v}]$.
\end{remark}

\begin{definition}\label{D:StochComp}
Let $\{u_i(t,x); (t,x)\in \R_+\times K\}$, $i=1,2$, be two random fields, where $K$ is the spatial index set as in Definition \ref{D:Cones}.
For some set of functions $\bbF$, such as those defined in Definition \ref{D:Cones}, and for some $n\ge 1$, we say that $u_1$ and $u_2$ satisfy the {\it $n$-time stochastic comparison principle over $\bbF$ with $u_1$ dominating $u_2$} if for any $0<t_1<\cdots<t_n<\infty$, and $F_1,\dots, F_n\in\bbF$, 
it holds that
\begin{align}\label{E!:MomComp}
 \E\left[\prod_{\ell=1}^n F_\ell\left(u_1(t_\ell,\cdot)\right)\right] 
 \ge 
 \E\left[\prod_{\ell=1}^n F_\ell\left(u_2(t_\ell,\cdot)\right)\right].
\end{align}
\end{definition}

Now we are ready to state our main results: 

\begin{theorem}[Comparison with respect to diffusion coefficients]
\label{T:MomComp} 
Suppose that the correlation function $f$ satisfies Dalang's condition \eqref{E:Dalang}. 
Let $\mu$ be a nonnegative measure that satisfies \eqref{E:J0finite}.
Let $u_1(t,x)$ and $u_2(t,x)$ be two solutions of \eqref{E:SHE}, both starting from $\mu$, but with diffusion coefficients $\rho_1$ and $\rho_2$, respectively. If 
\begin{gather*}
\text{either} \quad \rho_1(x)\ge \rho_2(x)\ge 0\quad
\text{or} \quad
\rho_1(x)\le \rho_2(x)\le 0\quad
\text{for all $x\ge 0$,}
\end{gather*}
then for any integer $n\ge 1$, $u_1$ and $u_2$ satisfy the $n$-time (resp. $1$-time) stochastic comparison principle over either $\bbF[C_{p,+}^{2,v}]$ or $\bbF[C^{2,v}_{b,-}]$ (resp. $\bbF[C^{2,v}_p]$) with $u_1$ dominating $u_2$, where the spatial index set is $K=\R^d$. 
\end{theorem}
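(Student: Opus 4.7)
I would split the statement into the one-time case ($n=1$), established by an It\^o--Duhamel comparison, and the multi-time case ($n\geq 2$), reduced to the one-time case via a Markov-property induction that requires the cones $\bbF[C^{2,v}_{p,+}]$ and $\bbF[C^{2,v}_{b,-}]$ to be preserved both under multiplication and under the nonlinear semigroup. Let $u_i^\epsilon$ denote the strong approximations constructed in Sections \ref{S:Aprox}--\ref{SS:Aprox2}, and $P_i^t$ their Markov semigroups. For any cylindrical $F(z)=g(z(x_1),\ldots,z(x_m))$ with $g\in C^{2,v}$, It\^o's formula applied to $u_i^\epsilon$ exhibits the generator as
\begin{equation*}
L_i F(z) \;=\; \tfrac{1}{2}\sum_{j} \partial_{j} g(z)\,\Delta_x z(x_j) \;+\; \tfrac{1}{2}\sum_{j,k}\partial_{jk} g(z)\,\rho_i(z_j)\,\rho_i(z_k)\,f_\epsilon(x_j-x_k),
\end{equation*}
whose first (transport) term is independent of $i$ and hence cancels in the difference $L_1-L_2$.

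\textbf{One-time comparison.} Applying It\^o's formula to $s\mapsto (P_2^{t-s}F)(u_1^\epsilon(s,\cdot))$ yields the Duhamel identity $P_1^t F - P_2^t F = \int_0^t P_1^s \bigl[(L_1-L_2)P_2^{t-s}F\bigr]\,ds$. Its integrand is a half-sum of
\begin{equation*}
\partial_{jk}\bigl(P_2^{t-s}F\bigr)(z)\,\bigl[\rho_1(z_j)\rho_1(z_k)-\rho_2(z_j)\rho_2(z_k)\bigr]\,f_\epsilon(x_j-x_k),
\end{equation*}
which is nonnegative term-by-term: the solutions are nonnegative by Theorem \ref{T:WeakComp}, so the bracket is nonnegative on $\R_+^2$ under either sign hypothesis on the $\rho_i$; the mollified $f_\epsilon$ is nonnegative by construction; and $\partial_{jk}F\geq 0$ by the defining property of the cone $\bbF[C^{2,v}]$. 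Since no sign condition on the first partials of $F$ has been used, this yields the $n=1$ statement on the larger cone $\bbF[C^{2,v}_p]$ once we pass $\epsilon\downarrow 0$ via the $L^p$-moment bounds that are available under Dalang's condition \eqref{E:Dalang}.

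\textbf{Multi-time iteration and main obstacle.} For $n\geq 2$, I would condition on $\calF_{t_{n-1}}$ and invoke the Markov property to rewrite
\begin{equation*}
\E\Big[\prod_{\ell=1}^{n} F_\ell(u_i(t_\ell,\cdot))\Big] \;=\; \E\Big[\prod_{\ell<n}F_\ell(u_i(t_\ell,\cdot))\cdot \bigl(P_i^{t_n-t_{n-1}}F_n\bigr)(u_i(t_{n-1},\cdot))\Big],
\end{equation*}
and then replace $P_1$ by $P_2$ in the innermost slot and cascade back through the tower. This requires \emph{multiplicative closure} of the cone (immediate from the Leibniz rule, since on $\bbF[C^{2,v}_{p,+}]$ and $\bbF[C^{2,v}_{b,-}]$ the sign conditions on first partials force the cross terms $F'_j G'_k$ to have the correct sign), and \emph{semigroup preservation}, namely that $P_i^t F$ lies in the same cone as $F$. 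Monotonicity of $P_i^t F$ in its arguments follows from the sample-path comparison principle of Theorem \ref{T:WeakComp} applied to perturbations of the initial data, while nonnegativity of the mixed second partials $\partial_{jk}(P_i^t F)$ is obtained by differentiating the mild equation twice in the initial data and checking that the second Fr\'echet variation satisfies a linear SPDE with nonnegative source whenever $F$ has nonnegative second partials. The principal technical obstacle I anticipate is making this second-variation analysis uniformly rigorous under rough initial data and mere Dalang's condition on $f$: uniform $L^p$-estimates on the first and second variations of $u_i^\epsilon$ in its initial data are what is needed to propagate nonnegativity through the nonlinear semigroup and to pass $\epsilon\downarrow 0$ in both the Duhamel identity and the iterated Markov formula. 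Once this is secured, induction on $n$ closes the argument, and the very fact that the multiplicative and semigroup closures are needed only in the multi-time setting explains why the admissible test functions restrict from $\bbF[C^{2,v}_p]$ (for $n=1$) to the sub-cones $\bbF[C^{2,v}_{p,+}]$ and $\bbF[C^{2,v}_{b,-}]$ (for $n\geq 2$).
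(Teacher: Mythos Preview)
Your high-level architecture---the Duhamel identity $P_1^tF-P_2^tF=\int_0^t P_1^s[(L_1-L_2)P_2^{t-s}F]\,ds$, the observation that $L_1-L_2$ is purely second-order, and the Markov-property induction for the multi-time case---is exactly the Cox--Fleischmann--Greven strategy the paper adopts (see Section~\ref{SS:Finite}). The multiplicative closure of the sub-cones you describe is also correct and appears verbatim in Step~3 of the proof of Theorem~\ref{T:FiniteCc}.

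The gap is in your proposed proof of \emph{semigroup preservation}, i.e.\ that $D_iD_j(P^tF)\ge 0$. You write that this follows by ``differentiating the mild equation twice in the initial data and checking that the second Fr\'echet variation satisfies a linear SPDE with nonnegative source''. But expanding $D_iD_j\,\E_z[F(U(t))]$ produces two terms: one involving $\partial_{kl}F\cdot (D_iU_k)(D_jU_l)$, which is not a sum of nonnegative terms (the products $(D_iU_k)(D_jU_l)$ have no sign), and one involving $\partial_kF\cdot D_iD_jU_k$. The linear equation satisfied by $D_iD_jU_k$ has source $\rho''(U)(D_iU)(D_jU)$, whose sign you do not control without convexity assumptions on $\rho$ that are nowhere made. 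So this route, as stated, does not close.

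The paper avoids this problem altogether. It first reduces (via the approximations of Sections~\ref{S:Aprox} and~\ref{SS:Aprox2}) all the way down to a \emph{finite}-dimensional SDE with $\rho\in C_c^2(\R_+)$, where Friedman's regularity result guarantees $T_tF\in C^2$. Then, instead of differentiating in the initial data, it uses Trotter's product formula to split the semigroup into a deterministic drift part $T_t^{(\kappa,0)}$ (for which $D_iD_j T_tF\ge 0$ is a direct computation with the matrix exponential) and a pure-diffusion part $T_t^{(0,\rho)}$ with no coupling between coordinates. For the latter, the second-difference criterion \eqref{E:CalcDD} reduces $D_iD_jT_tF\ge 0$ to the inequality $F(U^{12})-F(U^1)-F(U^2)+F(U^0)\ge 0$, which follows from the \emph{pathwise} comparison theorem for one-dimensional SDEs applied component-by-component. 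No second-variation analysis is needed, and no sign hypothesis on $\rho''$ enters. You should replace your second-variation argument by this Trotter-plus-pathwise-comparison step.
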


We now state the stochastic comparison theorem with respect to two comparable correlation functions $f_1$ and $f_2$.

\begin{theorem}[Comparison with respect to correlations of noises]\label{T2:MomComp}
Let $\dot{M}^{(1)}$ and $\dot{M}^{(2)}$ be two noises with correlation functions $f_1$ and $f_2$, respectively,  that satisfy Dalang's condition \eqref{E:Dalang}.
Let $\mu$ be a nonnegative measure that satisfies \eqref{E:J0finite}.
Let $u_1(t,x)$ and $u_2(t,x)$ be two solutions of \eqref{E:SHE}, both starting from $\mu$, with the same diffusion coefficient $\rho$, driven by $\dot{M}^{(1)}$ and $\dot{M}^{(2)}$, respectively. 
If 
\[
f_1 \geq f_2\quad
\text{(i.e., $f_1-f_2$ is a nonnegative measure),}
\] 
then for any integer $n\ge 1$, $u_1$ and $u_2$ satisfy the $n$-time (resp. $1$-time) stochastic comparison principle over either $\bbF[C_{p,+}^{2,v}]$ or $\bbF[C^{2,v}_{b,-}]$ (resp. $\bbF[C^{2,v}_p]$) with $u_1$ dominating $u_2$, where the spatial index set is $K=\R^d$.
\end{theorem}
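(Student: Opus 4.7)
The plan is to mirror the proof of Theorem \ref{T:MomComp}, but with the perturbation coming from the correlation of the noise rather than from the diffusion coefficient. First I would couple the two noises on a common probability space: since $f_1-f_2$ is a nonnegative measure, $\dot{M}^{(1)}$ has the same law as $\dot{M}^{(2)}+\dot{N}$, where $\dot{N}$ is an independent spatially homogeneous Gaussian noise, white in time, with spatial correlation $f_1-f_2$. Next, invoking the approximation machinery developed in Sections \ref{S:Aprox} and \ref{SS:Aprox2}, I would mollify each noise in space and smooth/truncate the initial datum $\mu$, so that the resulting SHEs admit strong solutions that are smooth in the spatial variable. This reduces the problem to comparing two (infinite-dimensional) SDE systems driven by correlated noises with mollified covariance kernels $f_1^\epsilon \ge f_2^\epsilon \ge 0$.

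The generator-level comparison is where the correlation hypothesis enters. For an $F\in\bbF[C^{2,v}]$ depending on the solution through finitely many spatial points $x_1,\dots,x_m$, applying \Itos formula to $F\bigl(u_i^\epsilon(t,x_1),\dots,u_i^\epsilon(t,x_m)\bigr)$ and taking expectations produces a drift term of the form
\[
\tfrac12\sum_{j,k=1}^m \partial_{jk}^2 F \cdot \rho\bigl(u_i^\epsilon(t,x_j)\bigr)\rho\bigl(u_i^\epsilon(t,x_k)\bigr)\, f_i^\epsilon(x_j-x_k).
\]
Because $F\in\bbF[C^{2,v}]$ forces $\partial_{jk}^2 F\ge 0$, because $\rho(u_i^\epsilon)\ge 0$ (by $\rho(0)=0$, nonnegativity of $\mu$, and the weak comparison Theorem \ref{T:WeakComp}), and because $f_1^\epsilon-f_2^\epsilon$ is pointwise nonnegative (the mollification of a nonnegative measure by a nonnegative kernel), one obtains the pointwise generator comparison $\calL_1 F \ge \calL_2 F$ on the cone $\bbF[C^{2,v}]$. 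The single-time inequality $\E[F(u_1(t,\cdot))]\ge \E[F(u_2(t,\cdot))]$ then follows from the standard duality
\[
T_t^{(1)}F-T_t^{(2)}F=\int_0^t T_{t-s}^{(1)}(\calL_1-\calL_2)T_s^{(2)}F\,\ud s,
\]
provided the relevant subcone ($\bbF[C_{p,+}^{2,v}]$, $\bbF[C_{b,-}^{2,v}]$, or $\bbF[C_p^{2,v}]$ for the $1$-time case) is preserved under the semigroup $T_s^{(2)}$. This semigroup stability is the same property that underlies the proof of Theorem \ref{T:MomComp}, and it is where the monotonicity requirement on the first derivatives plays its role.

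The $n$-time statement is then obtained by induction in $n$ using the Markov property. Conditioning on $\calF_{t_{n-1}}$, the inner conditional expectation of $F_n(u_i(t_n,\cdot))$ equals $\bigl(T_{t_n-t_{n-1}}^{(i)}F_n\bigr)(u_i(t_{n-1},\cdot))$, and one inserts an intermediate step $T^{(1)}_{t_n-t_{n-1}}F_n\ge T^{(2)}_{t_n-t_{n-1}}F_n$ (the single-time comparison applied to $F_n$) before invoking the inductive hypothesis against the new test product $F_1\cdots F_{n-1}\cdot T^{(2)}_{t_n-t_{n-1}}F_n$; cone preservation under both the semigroup and pointwise multiplication guarantees that this product remains in the cone.

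The main obstacle will be the passage to the limit $\epsilon\downarrow 0$: one has to show that the approximate strong solutions converge \emph{jointly} (in all $n$ time-slots simultaneously) to the mild solution of \eqref{E:SHE} under only Dalang's condition and rough initial data, and that the inequality survives the limit against test functions of possibly polynomial growth. Joint convergence is delicate because the induction above requires the Markov structure to be respected along the entire time grid $t_1<\cdots<t_n$ uniformly in $\epsilon$; the polynomial-growth cone $\bbF[C_p^{2,v}]$ additionally demands uniform-in-$\epsilon$ moment bounds, which should follow from standard Picard-iteration estimates under \eqref{E:Dalang}, whereas for the bounded cones $\bbF[C_{b,\pm}^{2,v}]$ mere convergence in distribution suffices.
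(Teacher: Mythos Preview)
Your proposal is essentially the paper's approach: reduce via the approximations of Sections~\ref{S:Aprox} and~\ref{SS:Aprox2}, run the integration-by-parts/generator comparison using $f_1^\epsilon\ge f_2^\epsilon$ together with cone preservation, handle the multi-time case by Markov induction, and pass to the limit using uniform moment bounds. Two minor remarks: the pathwise coupling $\dot M^{(1)}=\dot M^{(2)}+\dot N$ is unnecessary since the comparison is only in expectation, and the paper does \emph{not} carry out the cone preservation $D_iD_jT_s^{(2)}F\ge 0$ (nor the required $C^2$-regularity of $T_s^{(2)}F$) at the mollified-SPDE level where you seem to be working---it first discretizes in space (Step~3 of Section~\ref{S:Aprox}) and then truncates to a finite lattice with $\rho\in C_c^2$ (Section~\ref{SS:Aprox2}), so that the duality formula and Trotter argument run in genuinely finite dimensions (Theorem~\ref{T:FiniteCc2}).
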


We would like to point out that for multiple-time comparison results, working on $\mathbb{F}_M$ alone won't be sufficient since $\mathbb{F}_M$ is not a function cone, i.e., it is not preserved under the underlying semigroup and multiplication (see Step 3 of the proof of Theorem \ref{T:FiniteCc} in Section \ref{SS:Finite} below). One needs to go through  the function cone $\mathbb{F}[C^{2,v}_{+}]$ or $\mathbb{F}[C^{2,v}_{-}]$ as in \cite{CFG96}. On the other had, as an application of the 1-time comparison principle, we can obtain Slepian's inequality for SPDEs. Let $C_b^{2}(\R^d;\R_+)$ denote the set of $C^2$ functions with bounded partial derivatives of orders $0$, $1$ and $2$. %Set $\phi(x) = \one_{\{|x|\le 1\}}$ and $\phi_\epsilon(x)=\epsilon^{-d}\phi(x/\epsilon)$.}

\begin{corollary}[Slepian's inequality for SPDEs] \label{C:SPDE:Slepian}
Under the assumptions in Theorem \ref{T2:MomComp} and, in addition, either 

\begin{enumerate}
\item[(i)] for some $\epsilon>0$ such that $f_1\left([-\epsilon, \epsilon]^d\right) = f_2\left([-\epsilon, \epsilon]^d\right)$ or
\item[(ii)] 
both $f_1$ and $f_2$ are  in $C_b^2(\R^d; \R_+)$ such that $f_1(0)=f_2(0)$,
\end{enumerate}
we have that, for any numbers $a_i > 0$, $x_i \in \R^d$ for $i=1, \dots, N$, and $t\geq 0$,  
\begin{equation}\label{E:SPDE:Slepian1}
\bbP\left\{u_1(t, x_1) \leq a_1, \dots, u_1(t, x_N) \leq a_N    \right\}  \geq  \bbP\left\{u_2(t, x_1) \leq a_1, \dots, u_2(t, x_N) \leq a_N    \right\}. 
\end{equation}
In particular, for any $a>\R$, $x_i \in \R^d$ for $i=1, \dots, N$, and $t\geq 0$, the inequality \eqref{E:SPDE:Slepian2} it true. 
\end{corollary}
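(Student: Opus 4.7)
The strategy is to approximate the joint indicator in \eqref{E:SPDE:Slepian1} by products of smooth bounded decreasing $C^2$ functions and to rerun the interpolation argument behind Theorem~\ref{T2:MomComp} so that the matching hypothesis at the origin kills the diagonal second-derivative terms that the product's weaker sign structure does not control.

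For each $i = 1,\dots,N$ and $n \in \bbN$, I would fix smooth decreasing $\phi_{i,n} : \R_+ \to [0,1]$ with $\phi_{i,n}(z) \to \ind{z \leq a_i}$ pointwise as $n \to \infty$, and set $F_n(z_1,\dots,z_N) := \prod_{i=1}^N \phi_{i,n}(z_i)$. A direct computation gives, for $k \neq \ell$,
\begin{equation*}
\partial_k \partial_\ell F_n(z) = \phi_{k,n}'(z_k)\,\phi_{\ell,n}'(z_\ell) \prod_{j \neq k,\ell} \phi_{j,n}(z_j) \geq 0,
\end{equation*}
whereas $\partial_k^2 F_n$ inherits the indefinite sign of $\phi_{k,n}''$. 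Thus $F_n \notin \bbF[C^{2,v}_p]$ in general, and Theorem~\ref{T2:MomComp} does not apply to $F_n$ directly; what is needed is a comparison requiring $\partial_k \partial_\ell F \geq 0$ only for $k \neq \ell$.

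I would re-enter the proof of Theorem~\ref{T2:MomComp} at the level of its SDE discretization (Sections~\ref{S:Aprox}--\ref{SS:Aprox2}). Discretizing space on a lattice of mesh $\delta$ and coupling the two noises so that their driving Gaussian processes interpolate as $f^\lambda := \lambda f_1 + (1-\lambda) f_2$, It\^o's formula for the resulting finite-dimensional diffusion $u^{\lambda,\delta}$ expresses $\E[F_n(u^{1,\delta}(t,x_\cdot))] - \E[F_n(u^{2,\delta}(t,x_\cdot))]$ as a $(\lambda,s)$-integral of
\begin{equation*}
\tfrac12\,\E\!\left[\sum_{k,\ell} \rho\bigl(u_k^{\lambda,\delta}(s)\bigr)\rho\bigl(u_\ell^{\lambda,\delta}(s)\bigr) \bigl(c^{(1)}_{k\ell} - c^{(2)}_{k\ell}\bigr)(\partial_k \partial_\ell F_n)(u^{\lambda,\delta}(s))\right],
\end{equation*}
where $c^{(i)}_{k\ell}$ are the cell correlations of $f_i$ on the mesh. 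Under (i), choosing $\delta < \epsilon/\sqrt d$ and using that $f_1 - f_2 \geq 0$ vanishes as a measure on $[-\epsilon,\epsilon]^d$ yields $c^{(1)}_{kk} = c^{(2)}_{kk}$ identically; under (ii), Taylor-expanding $f_1 - f_2$ at $0$ with $f_1(0) = f_2(0)$, combined with the symmetry of the cell integration (which annihilates the linear term), makes $c^{(1)}_{kk} - c^{(2)}_{kk}$ of strictly higher order in $\delta$ than the off-diagonal terms, so that the total diagonal contribution vanishes in the continuum limit $\delta \to 0$. In either case only the off-diagonal sum survives, which is nonnegative by the first display above; hence $\E[F_n(u^{1,\delta}(t,x_\cdot))] \geq \E[F_n(u^{2,\delta}(t,x_\cdot))]$.

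Letting $\delta \to 0$ via the approximation results of Sections~\ref{S:Aprox}--\ref{SS:Aprox2} and then $n \to \infty$ by bounded convergence delivers \eqref{E:SPDE:Slepian1}, and specializing $a_i \equiv a$ gives \eqref{E:SPDE:Slepian2}. The main obstacle will be the diagonal cancellation under hypothesis (ii), where the vanishing is only asymptotic: one needs quantitative control on $\rho(u^{\lambda,\delta}_k)$ uniformly in $k$ together with size estimates on the second derivatives of $F_n$ (which, for fixed $n$, are bounded but may be large) strong enough to beat the growth in the number of active diagonal sites as the mesh shrinks.
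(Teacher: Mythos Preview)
Your approach is essentially the paper's: approximate the joint indicator by products of smooth decreasing one-variable functions, observe that only the off-diagonal second partials are guaranteed nonnegative, and use the matching hypothesis to make the diagonal terms in the generator difference drop out at the discretized SDE level (the paper's Corollary~\ref{C:Slepian}), then pass to the limit.

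The one substantive difference is your handling of case~(ii). The paper does not run an asymptotic argument there at all. Instead it observes that when $f_1,f_2\in C_b^2(\R^d;\R_+)$ already, one may \emph{skip the noise-mollification step} of the approximation scheme entirely (Step~1, part~(2)); the remaining steps still go through because the needed regularity \eqref{E:fee} is already present. At the SDE level this gives discrete correlations with $\gamma_1(0)=f_1(0)=f_2(0)=\gamma_2(0)$ \emph{exactly}, so the diagonal cancellation is exact at every approximation stage, just as in case~(i). Your Taylor-expansion route is more laborious and, as you note, leaves a residual to control; the paper's observation makes that residual identically zero.

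Separately, your stated ``main obstacle'' is a phantom: the functional $F_n$ depends only on the $N$ fixed observation points, so after discretization its Hessian is $N\times N$ and there are exactly $N$ diagonal terms regardless of the mesh $\delta$. There is no ``growth in the number of active diagonal sites''; the diagonal contribution under your scheme is simply $N$ terms of size $O(\delta^2)$ times bounded quantities (for fixed $n$), which vanishes as $\delta\to0$ without any competing blow-up. So your route for (ii) would in fact close, but the paper's is cleaner.
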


Here, one example for the case $(i)$ in Corollary \ref{C:SPDE:Slepian} is that $d=1$, $f_1(x)=\delta_0(x)+c(\delta_{-1}(x)+\delta_1(x))$ and $f_2(x)=\delta_0(x)$ where $c\in [0,1/2]$ is a fixed constant. For the case $(ii)$, $f_1(x)=e^{-|x|^2}$ and $f_2(x)=e^{-2|x|^2}$ or $f_1(x)=\frac{1}{1+|x|^2}$ and $f_2(x)= \frac{1}{1+2|x|^2}$.

\bigskip
{\bf\noindent Interacting diffusions. }
The proof of the above comparison theorems \ref{T:MomComp} and \ref{T2:MomComp} rely on similar comparison results for the following linearly interacting diffusions, which are of interest by themselves.
Let $K$ denote a non-empty set with at most countably infinite elements (e.g. $K=\delta \bbZ^d$ with $\delta>0$ fixed or $K=\{1,\cdots, d\}$). Let us consider 
\begin{equation}\label{E:SDE1}
 \left\{
\begin{array}{ll}
\displaystyle  d U(t, i)=\kappa \sum_{j\in K} p_{i,j} \left( U(t, j)- U(t, i)\right) \ud t + \rho (U(t,i)) \ud M_i(t), \quad &i\in K, t>0, 
\\[1em]
\displaystyle      
U(0, i) =u_0(i)\,, & i \in K, \,\\
\end{array}
\right.
\end{equation}
where $\kappa>0$ is a fixed constant and we make the following assumptions over this equation:

\begin{assumption}\label{A:SDE}
Assume that
\begin{enumerate} 
\item[(i)] $p:=\{p_{i,j}; \ i, j \in K\}$ is a probability transition matrix in $K$ such that
\begin{align}\label{E:Lambda}
\Lambda:= \sup_{j\in K}\sum_{i\in K} p_{i,j}<+\infty.
\end{align}
\item[(ii)] $\rho:\R_+\to \R_+$ is a globally Lipschitz function with $\rho(0)=0$.
\item[(iii)] $\{M_i(t); t\ge 0\}_{i\in K}$ is a system of correlated Brownian motions with the following covariance structure:
\begin{equation}\label{E:corrBM}
\E [M_i(t)M_j(s)]=(t\wedge s) \gamma (i-j),
\end{equation}
where $\gamma: K\to \R_+$ is a non-negative, symmetric and non-negative definite function. 
\item[(iv)] $u_0:K\to \R_+$ is a non-negative function in $\ell^2(K)$ such that $u_0(i)>0$ for some $i\in K$.
\end{enumerate}
\end{assumption} 

\begin{remark}
Regarding condition \eqref{E:Lambda}, when the state space $K$ has finite cardinality, it is trivially satisfied. When the underlying random walk is symmetric, i.e., $p_{i,j}=p_{j,i}$, then this condition is satisfied with $\Lambda=1$. 
\end{remark}

We say that  $U=\{U(t, i);\, t\geq 0, i\in K\}$ is a {\em strong solution} to \eqref{E:SDE1} with the initial data $u_0(\cdot)$ if it satisfies that for all $i\in K$ and $t>0$,
\begin{align}\label{E:SDE1-Strong}
U(t, i) =u_0(i)+ \kappa \int_0^t \sum_{j\in K} p_{i,j}(U(s,j)-U(s, i)) \, \ud s + \int_0^t\rho (U(s,i)) \ud M_i(s).
\end{align}  
The existence and uniqueness of a strong solution to \eqref{E:SDE1} when the driving Brownian motions are independent is well-known (see, e.g., \cite{ShigaShimuzu}).
Since we only need the case when the initial data is in $\ell^2(K)$ --- (iv) of Assumption \ref{A:SDE}, 
we won't need the weighted $\ell^2(K)$ space as was used in \cite{ShigaShimuzu}.  
In \cite{FJL18},  $p_{i,j}$ depends only on $j-i$ and it is shown that there is a unique {\it mild} solution to \eqref{E:SDE1} in $L^\infty([0,T]\times K; L^k(\Omega))$ for any $T>0$ and $k\ge 2$. The next theorem, on the other hand, we provide a proof of existence and uniqueness of a strong solution in a slightly better space (see  \eqref{E:DiscreteSpace}) and for more general transition probabilities $p_{i,j}$. As one can see later, a strong solution is easier to handle than a mild solution when showing approximations.

\begin{theorem}\label{T:ExtUniqSDE1}
There exists a unique strong solution $\{U(t, i); t\ge 0, i\in K\}$ to \eqref{E:SDE1} in 
\begin{align}\label{E:DiscreteSpace}
L^\infty\left([0,T];L^k\left(\Omega;\ell^k(K)\right)\right)
\qquad\text{for any $T>0$ and $k\ge 2$.}
\end{align}
In particular, $U(t,\cdot)\in \ell^k(K)$ a.s. for any $t\geq 0$ and $k\ge 2$. Moreover, for any $T>0$ and $k\geq 2$, 
\begin{align}\label{E:lkMom}
\sup_{0\leq t\leq T} \E \left[\sup_{i\in K}\,|U(t,i)|^k\right]\le \sup_{0\leq t\leq T}  \E\left[ \Norm{U(t,\cdot)}_{\ell^k(K)}^k\right]& \leq 3^k\|u_0\|^k_{\ell^k(K)} \exp\left(C k^2 T \right)<\infty, 
\end{align}
where the constant $C>0$ depends only on $\kappa, \Lip_\rho$, $\gamma(0)$ and $\Lambda$.
\end{theorem}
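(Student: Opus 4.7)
The plan is to construct the unique strong solution via Picard iteration in $L^\infty([0,T]; L^k(\Omega;\ell^k(K)))$ and to extract the moment bound \eqref{E:lkMom} from an It\^{o}/Gronwall argument applied to the process $\|U(t,\cdot)\|_{\ell^k(K)}^k$. Note first that $\ell^2(K) \hookrightarrow \ell^k(K)$ for $k\ge 2$, so $\|u_0\|_{\ell^k(K)}<\infty$ and the right-hand side of \eqref{E:lkMom} is meaningful. The heart of the matter is the following a priori estimate: for any candidate solution that lies a.s. in $\ell^k(K)$, apply It\^{o}'s formula to $|U(t,i)|^k$ (with the standard $(x^2+\varepsilon)^{k/2}$ mollification when $k$ is odd), sum over $i\in K$ after a stopping-time localization, and take expectations. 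The drift produces the diagonal piece $-\kappa k\,\E\|U(t,\cdot)\|_{\ell^k}^k$ (using $\sum_j p_{i,j}=1$) plus an off-diagonal cross term $\kappa k\sum_{i,j}p_{i,j}\E[|U(i)|^{k-1}\sgn(U(i))\,U(j)]$, which is absorbed using Young's inequality $|a|^{k-1}|b|\le \tfrac{k-1}{k}|a|^k+\tfrac{1}{k}|b|^k$ together with the uniform column-sum bound $\Lambda$ from \eqref{E:Lambda}. The It\^{o} correction $\tfrac{k(k-1)}{2}\sum_i |U(i)|^{k-2}\rho(U(i))^2\gamma(0)$ is controlled by $Ck^2\gamma(0)\,\E\|U(t,\cdot)\|_{\ell^k}^k$ via $|\rho(x)|\le \Lip_\rho|x|$. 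Gronwall then yields $\sup_{t\le T}\E\|U(t,\cdot)\|_{\ell^k}^k \le 3^k\|u_0\|_{\ell^k}^k\exp(Ck^2T)$, the $3^k$ prefactor coming from separating initial data, drift integral, and stochastic integral by convexity before taking the $k$th power.

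Existence then proceeds by Picard iteration: set $U^{(0)}(t,i):=u_0(i)$ and define $U^{(n+1)}$ via the natural recursion based on \eqref{E:SDE1-Strong}. The a priori bound applied uniformly in $n$ keeps the iterates in a fixed ball of $L^\infty([0,T]; L^k(\Omega;\ell^k(K)))$. Running the identical It\^{o} computation on the difference $V^{(n+1)}:=U^{(n+1)}-U^{(n)}$ --- which satisfies a linear equation with Lipschitz coefficients because $\rho$ is Lipschitz --- yields $\sup_{t\le T}\E\|V^{(n+1)}(t,\cdot)\|_{\ell^k}^k \le C(k)\int_0^T\E\|V^{(n)}(s,\cdot)\|_{\ell^k}^k\,\ud s$, whose $n$-fold iteration gives a $(C(k)T)^n/n!$ bound and hence summability. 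The limit $U$ satisfies \eqref{E:SDE1-Strong} upon passing to the limit term by term, using dominated convergence for the drift and It\^{o}'s isometry with respect to \eqref{E:corrBM} for the stochastic integral. Uniqueness follows by applying the same contraction to the difference of any two solutions, and the sup-in-$i$ inequality in \eqref{E:lkMom} is immediate from $\sup_i|U(t,i)|^k\le \|U(t,\cdot)\|_{\ell^k}^k$.

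The principal difficulty is the combination of infinitely many coupled equations with correlated driving noises $\{M_i\}_{i\in K}$. What rescues the argument is that $|U(i)|^k$ depends only on the $i$-th coordinate, so in the It\^{o} correction only the diagonal quadratic variation $\ud\langle M_i\rangle_t=\gamma(0)\,\ud t$ contributes; the off-diagonal covariances $\gamma(i-j)$ for $i\ne j$ never enter the $\ell^k$ moment estimate. This is exactly what allows the standard argument for independent Brownian motions to carry over verbatim to the correlated setting, with $\gamma(0)$ absorbed into the constant $C$. A secondary technical point --- that a Picard iterate need not a priori lie in $\ell^k$ --- I would handle by first truncating $K$ to finite sets $K_N\uparrow K$, running the whole argument on $K_N$ (where everything is trivially well defined), and then sending $N\to\infty$ using the $N$-uniform Gronwall bound together with Fatou's lemma.
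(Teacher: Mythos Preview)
Your approach is correct and takes a genuinely different route from the paper's proof. The paper never applies It\^o's formula to the Picard iterates at all: instead it introduces the weighted norm
\[
\mathcal{N}_{\beta,k}(Z):=\sup_{t\ge 0}e^{-\beta t}\,\E\bigl[\|Z(t,\cdot)\|_{\ell^k(K)}^k\bigr],
\]
assumes $k$ is even, and estimates $\mathcal{N}_{\beta,k}(U^{(n+1)})$ directly by expanding the $k$-th power of the drift integral as a $k$-fold integral (using the AM--GM inequality $\prod a_\ell \le k^{-1}\sum a_\ell^k$ together with the column-sum bound $\Lambda$) and by applying the Burkholder--Davis--Gundy inequality to the stochastic term. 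Choosing $\beta=\beta_*$ so that the resulting multiplier equals $1/2$ gives a geometric contraction; a separate elementary lemma then shows $\beta_*\le Ck^2$, which is where the exponent in \eqref{E:lkMom} comes from. Your It\^o--Gronwall argument is more in the spirit of Shiga--Shimizu and yields the $Ck^2$ exponent in one stroke from the coefficient $\tfrac{k(k-1)}{2}\gamma(0)\Lip_\rho^2$ in the correction term, which is slightly cleaner; on the other hand, the paper's weighted-norm argument is completely self-contained and avoids the stopping-time localization and the $K_N\uparrow K$ truncation you need to justify summing It\^o's formula over $i\in K$.

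One point in your sketch deserves tightening. When you apply It\^o's formula to $|U^{(n+1)}(t,i)|^k$, both the drift and the It\^o correction involve $U^{(n)}$ rather than $U^{(n+1)}$ (since the coefficients in the recursion are evaluated at the previous iterate), so the resulting differential inequality mixes $\|U^{(n+1)}(t,\cdot)\|_{\ell^k}^k$ and $\|U^{(n)}(t,\cdot)\|_{\ell^k}^k$. This does not wreck the argument --- Young's inequality separates the two, and iterating the integral inequality still produces a uniform-in-$n$ bound --- but the phrase ``the a priori bound applied uniformly in $n$'' glosses over this mixing. Once that is spelled out, your proof is complete. Both proofs correctly observe that only $\gamma(0)$, and not the off-diagonal $\gamma(i-j)$, enters the $\ell^k$ estimate, which is indeed the key reason the correlated case costs nothing extra.
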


Note that the discrete nature of the spatial variable enables us to bring the supremum over the spatial variable inside the expectation; see \eqref{E:Sup<Lp}. This is in general not true when the spatial variable lives in $\R^d$.
For this interacting diffusions \eqref{E:SDE1}, we have the following two similar stochastic comparison results: 

\begin{theorem}[Comparison with respect to diffusion coefficients]\label{T:MomComSDE1}
Let $U_1$ and $U_2$ be two solutions to \eqref{E:SDE1}, both starting from $u_0$, but with diffusion coefficients $\rho_1$ and $\rho_2$, respectively. 
Then the condition 
\[
\rho_1(x)\geq \rho_2(x)\ge 0
\quad\text{or}\quad \rho_1(x)\le \rho_2(x)\le 0, \quad\text{for all $x\geq 0$}
\]
implies that for any integer $n\ge 1$, $U_1$ and $U_2$ satisfy the $n$-time (resp. $1$-time) stochastic comparison principle over either $\bbF[C_{p,+}^{2,v}]$ or $\bbF[C^{2,v}_{b,-}]$ (resp. $\bbF[C^{2,v}_p]$) with $U_1$ dominating $U_2$.
\end{theorem}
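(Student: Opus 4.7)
The plan is to adapt the generator-comparison machinery of \cite{CFG96} to the infinite-dimensional system \eqref{E:SDE1}, reducing everything to a one-time comparison that is then iterated via the Markov property. Denote by $L^{(k)}$ the formal generator associated with diffusion $\rho_k$,
\[
L^{(k)} F(z) = \kappa\sum_{i,j\in K} p_{i,j}(z(j)-z(i))\partial_i F(z) + \tfrac{1}{2}\sum_{i,j\in K}\gamma(i-j)\,\rho_k(z(i))\rho_k(z(j))\,\partial_i\partial_j F(z),
\]
and by $P^{(k)}_t$ its Markov semigroup. The drift is identical in both generators, and the hypothesis $\rho_1\ge\rho_2\ge 0$ (or $\rho_1\le\rho_2\le 0$) on $\R_+$ gives $\rho_1(a)\rho_1(b)\ge \rho_2(a)\rho_2(b)\ge 0$ for all $a,b\ge 0$; combined with $\gamma\ge 0$ this yields the key pointwise inequality $(L^{(1)}-L^{(2)})F\ge 0$ for every $F\in C^{2,v}$ with $\partial_i\partial_j F\ge 0$. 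Note that this step uses no sign assumption on $\partial_i F$.

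For the one-time comparison I would apply It\^o's formula to $s\mapsto P^{(2)}_{t-s}F(U_1(s))$ on $[0,t]$, use $\partial_s P^{(2)}_{t-s}F=-L^{(2)}P^{(2)}_{t-s}F$, and exploit the common initial data to obtain
\[
\E\bigl[F(U_1(t))\bigr]-\E\bigl[F(U_2(t))\bigr] = \int_0^t \E\Bigl[\bigl(L^{(1)}-L^{(2)}\bigr)P^{(2)}_{t-s}F\bigl(U_1(s)\bigr)\Bigr]\,\ud s.
\]
Nonnegativity of the right-hand side then reduces to a \emph{cone-preservation} property: $P^{(2)}_\tau$ maps the cone back into a class of functions with $\partial_i\partial_j(\cdot)\ge 0$, so the generator-difference inequality applies. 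For the multi-time statement I would proceed by induction on $n$. Writing $\Delta=t_{n+1}-t_n$ and using the Markov property,
\begin{align*}
\E\Bigl[\prod_{\ell=1}^{n+1}F_\ell(U_1(t_\ell))\Bigr]-\E\Bigl[\prod_{\ell=1}^{n+1}F_\ell(U_2(t_\ell))\Bigr]
&= \E\Bigl[\prod_{\ell=1}^{n-1}F_\ell(U_1(t_\ell))\,F_n(U_1(t_n))\bigl(P^{(1)}_\Delta F_{n+1}-P^{(2)}_\Delta F_{n+1}\bigr)(U_1(t_n))\Bigr]\\
&\quad + \E\Bigl[\prod_{\ell=1}^{n-1}F_\ell(U_1(t_\ell))\bigl(F_n\cdot P^{(2)}_\Delta F_{n+1}\bigr)(U_1(t_n))\Bigr]\\
&\quad - \E\Bigl[\prod_{\ell=1}^{n-1}F_\ell(U_2(t_\ell))\bigl(F_n\cdot P^{(2)}_\Delta F_{n+1}\bigr)(U_2(t_n))\Bigr].
\end{align*}
The first line is $\ge 0$ by the one-time comparison applied pointwise in the initial data, and the last two lines combine to a nonnegative quantity by the induction hypothesis, provided that $F_n\cdot P^{(2)}_\Delta F_{n+1}$ still lies in the cone. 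A short Leibniz-rule computation shows that both $\bbF[C^{2,v}_{p,+}]$ and $\bbF[C^{2,v}_{b,-}]$ are closed under products (for $\bbF[C^{2,v}_{b,-}]$ the cross term $F'G'$ is nonnegative as a product of two nonpositive quantities), so the induction closes once cone preservation is established.

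The main obstacle, and by far the most delicate part, is the cone preservation for the semigroups $P^{(k)}_t$. I would establish this by formally differentiating \eqref{E:SDE1-Strong} with respect to the initial data, obtaining linear SDEs satisfied by the first and second variations $\partial_j U(t,i)$ and $\partial_j\partial_k U(t,i)$: the first-variation process solves a linear SDE whose solution stays nonnegative by a pathwise comparison with zero, giving the desired sign on $\partial_i P^{(k)}_t F$ (nonnegative or nonpositive according to the cone, through the chain rule), and the second-variation process then satisfies a linear SDE with a nonnegative forcing term once one inserts the sign of the first variation into the chain-rule expansion of $z\mapsto \rho(z(i))\rho(z(j))$. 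Since $K$ may be countably infinite, these derivative computations should first be performed on finite-range truncations of \eqref{E:SDE1}, where classical smoothness of SDE flows applies directly, and then transported to the full system via the finite-dimensional approximation results of Section \ref{SS:Aprox2}. Finally, the extension of the one-time statement from $\bbF[C^{2,v}_{b,-}]$ to $\bbF[C^{2,v}_p]$ would be obtained by a standard truncation and monotone-convergence argument, with polynomial tails controlled by the $L^k$-moment bound \eqref{E:lkMom}.
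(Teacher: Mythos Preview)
Your high-level architecture is the same as the paper's: the semigroup interpolation formula $T^{(1)}_t-T^{(2)}_t=\int_0^t T^{(1)}_s(L^{(1)}-L^{(2)})T^{(2)}_{t-s}\,\ud s$, the observation that $(L^{(1)}-L^{(2)})G\ge 0$ whenever $D_iD_jG\ge 0$, the reduction of the $n$-time statement to the one-time statement by the Markov property and product-closure of the cones, and the use of the Section~\ref{SS:Aprox2} approximations to pass from finite to infinite $K$. All of this matches the paper.

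The substantive divergence is in how you propose to prove cone preservation, i.e.\ $D_iD_j T^{(2)}_t F\ge 0$. The paper does \emph{not} differentiate the flow; instead it reduces to the finite-dimensional $C_c^2$ case (Theorem~\ref{T:FiniteCc}) and applies Trotter's product formula to split $T^{(\kappa,\rho)}_t$ into a pure-drift semigroup $T^{(\kappa,0)}_t$ (a deterministic linear flow, for which $D_iD_jT_tF\ge 0$ is a two-line matrix-exponential computation) and a pure-diffusion semigroup $T^{(0,\rho)}_t$ (each coordinate runs its own one-dimensional SDE, so cone preservation follows from the pathwise one-dimensional comparison principle and the finite-difference characterization of $D_iD_jF\ge 0$). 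This decoupling is precisely what makes the argument go through.

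Your variation-equation approach, as written, has a gap. Differentiating \eqref{E:SDE1-Strong} twice in $u_0$, the second-variation process $Z_i^{(k,l)}=\partial_{u_0(k)}\partial_{u_0(l)}U(t,i)$ satisfies
\[
\ud Z_i^{(k,l)}=\kappa\sum_j p_{i,j}\bigl(Z_j^{(k,l)}-Z_i^{(k,l)}\bigr)\ud t+\rho'(U_i)Z_i^{(k,l)}\ud M_i+\rho''(U_i)\,Y_i^{(k)}Y_i^{(l)}\,\ud M_i,
\]
so the ``forcing'' is a \emph{martingale} increment, not a nonnegative drift; it does not give $Z^{(k,l)}\ge 0$. (Your phrase ``chain-rule expansion of $z\mapsto\rho(z(i))\rho(z(j))$'' suggests you are conflating the SDE coefficients with the generator coefficients.) Even if one had $Z^{(k,l)}\ge 0$, the chain rule yields
\[
D_kD_lP_tF=\E\Bigl[\sum_{i,j}(D_iD_jF)(U)\,Y_i^{(k)}Y_j^{(l)}\Bigr]+\E\Bigl[\sum_i (D_iF)(U)\,Z_i^{(k,l)}\Bigr],
\]
and for the $1$-time statement over $\bbF[C_p^{2,v}]$ you impose no sign on $D_iF$, so the second term is uncontrolled. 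The Trotter route avoids this entirely because in the pure-diffusion piece the coordinates decouple, so one never needs the second variation of the flow. If you want to salvage a variation-equation proof you will need a genuinely different argument for $D_i^2P_tF\ge 0$ in the presence of drift; the paper's splitting is the clean way around this.
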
 

\begin{theorem}[Comparison with respect to covariances of noises]\label{T:MomComSDE2}
Let $U_1$ and $U_2$ be two solutions to \eqref{E:SDE1}, both starting from $u_0$, with the same diffusion coefficients 
$\rho$, %that satisfies 
%\[
%\rho(0)=0\quad\text{and}\quad \rho(x)\ge 0\quad\text{for all $x\geq 0$},
%\]
but driven by two sets of correlated Brownian motions $\{M^{(1)}_i(t); t\geq 0\}_{i\in K}$ and $\{M^{(2)}_i(t);t\geq 0\}_{i\in K}$, respectively. Let $\gamma_i$ be the covariance function for $M^{(i)}$.  
Then the condition 
\[ \gamma_1( k )\geq \gamma_2(k), \quad \text{for all $k\in K$}\] implies that for any integer $n\ge 1$, $U_1$ and $U_2$ satisfy the $n$-time (resp. $1$-time) stochastic comparison principle over either $\bbF[C_{p,+}^{2,v}]$ or $\bbF[C^{2,v}_{b,-}]$ (resp. $\bbF[C^{2,v}_p]$) with $U_1$ dominating $U_2$.
\end{theorem}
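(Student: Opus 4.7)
The plan is to adapt the argument of Theorem \ref{T:MomComSDE1}, with the only genuinely new ingredient being the generator computation. For $i=1,2$ let $\mathcal{L}_i$ denote the generator of the Markov process $\{U_i(t,\cdot)\}$ acting on cylindrical functions $F(u)=g(u(x_1),\dots,u(x_m))$ with $g\in C^{2,v}(\R_+^m;\R_+)$. A direct application of Itô's formula to \eqref{E:SDE1-Strong}, using \eqref{E:corrBM}, gives
\begin{equation*}
\mathcal{L}_i F(u) = \kappa\sum_{\ell=1}^m \sum_{j\in K} p_{x_\ell,j}\bigl(u(j)-u(x_\ell)\bigr)\partial_\ell g + \frac{1}{2}\sum_{\ell,\ell'=1}^m \gamma_i(x_\ell-x_{\ell'})\rho(u(x_\ell))\rho(u(x_{\ell'}))\partial_{\ell\ell'}^2 g.
\end{equation*}
Since the drift part is independent of $i$, the difference reduces to
\begin{equation*}
(\mathcal{L}_1-\mathcal{L}_2)F(u) = \frac{1}{2}\sum_{\ell,\ell'=1}^m (\gamma_1-\gamma_2)(x_\ell-x_{\ell'})\,\rho(u(x_\ell))\rho(u(x_{\ell'}))\,\partial_{\ell\ell'}^2 g.
\end{equation*}
As $U_i(t,\cdot)\ge 0$ (hence $\rho\circ U_i\ge 0$ thanks to $\rho(0)=0$), and $\partial_{\ell\ell'}^2 g\ge 0$ by definition of $C^{2,v}$, the hypothesis $\gamma_1(k)\ge\gamma_2(k)$ pointwise forces $(\mathcal{L}_1-\mathcal{L}_2) F\ge 0$ for every $F\in\bbF[C^{2,v}]$. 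Notably, only pointwise non-negativity of $\gamma_1-\gamma_2$ is used here; non-negative definiteness is not required.

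With this symbol-level inequality, the 1-time comparison follows from the standard interpolation identity. Letting $P_t^{(i)}$ denote the semigroup of $U_i$, fix $F$ in the relevant cone and consider $\Phi(s):=P_s^{(1)}(P_{t-s}^{(2)} F)$ for $s\in[0,t]$. Then $\Phi'(s)=P_s^{(1)}(\mathcal{L}_1-\mathcal{L}_2)P_{t-s}^{(2)}F$, so $\Phi$ is non-decreasing provided the relevant cone is invariant under $P_{t-s}^{(2)}$; in particular $P_t^{(1)}F\ge P_t^{(2)}F$. The $n$-time statement is then obtained by induction using the Markov property. Given the $(n-1)$-time result, write $\E\bigl[\prod_{\ell=1}^n F_\ell(U_1(t_\ell,\cdot))\bigr]=\E\bigl[F_1(U_1(t_1,\cdot))\, H_{n-1}(U_1(t_1,\cdot))\bigr]$ via the tower property, where $H_{n-1}(u):=\E_u\bigl[\prod_{\ell=2}^n F_\ell(U_1(t_\ell-t_1,\cdot))\bigr]$ lies in the same cone by the cone invariance above; one then applies the 1-time comparison to the product $F_1\cdot H_{n-1}$. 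That the cones $\bbF[C^{2,v}_{p,+}]$ and $\bbF[C^{2,v}_{b,-}]$ are closed under pointwise multiplication follows from the identity $(FG)''=F''G+2F'G'+FG''$, each term of which has the right sign in either cone (in $C^{2,v}_{b,-}$ the product $F'G'$ is non-negative as a product of two non-positive quantities).

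The main obstacle is the cone-invariance of $P_t^{(i)}$ in the infinite-dimensional setting, which is the technical heart of the proof of Theorem \ref{T:MomComSDE1} and can be imported here without modification. Following the Cox--Fleischmann--Greven template \cite{CFG96}, one first truncates $K$ to a finite subset $K_n$ and replaces \eqref{E:SDE1} by a finite-dimensional SDE whose classical smoothness-in-initial-data transports the inequalities $\partial_\ell g\ge 0$ and $\partial_{\ell\ell'}^2 g\ge 0$ along the flow; passage to $n\to\infty$ is then controlled by the $\ell^k$ moment bound \eqref{E:lkMom} of Theorem \ref{T:ExtUniqSDE1}, together with truncation in the test function $F$ to move between the bounded and polynomial-growth cones. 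Because neither the cone preservation nor the interpolation identity depends on the specific choice of noise covariance or diffusion coefficient, the entire approximation apparatus carries over verbatim from the proof of Theorem \ref{T:MomComSDE1}; the only place in which the present hypothesis $\gamma_1\ge\gamma_2$ enters is through the symbol-level inequality $(\mathcal{L}_1-\mathcal{L}_2) F\ge 0$ derived in the first paragraph.
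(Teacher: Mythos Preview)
Your proposal is correct and follows essentially the same route as the paper: the generator difference you compute matches exactly the paper's $G^{(1)}-G^{(2)}$ for the covariance case, the interpolation identity is the paper's integration-by-parts formula $T_t^{(1)}-T_t^{(2)}=\int_0^t T_s^{(1)}[G^{(1)}-G^{(2)}]T_{t-s}^{(2)}\,\ud s$, and the paper likewise reduces to finite dimensions with $C_c^2$ coefficient via the approximations of Propositions \ref{P:rhoBD}, \ref{P:rho2}, \ref{P:Approx1} before passing to the limit as in Section \ref{SS:4.4}.

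Two small remarks. First, your justification ``$\rho\circ U_i\ge 0$ thanks to $\rho(0)=0$'' is not the right reason; the paper simply uses that $\rho:\R_+\to\R_+$ by Assumption \ref{A:SDE}(ii). Second, where you invoke ``smoothness-in-initial-data'' for cone invariance, the paper is more specific: it applies Trotter's product formula to split $T_t^{(\kappa,\rho)}$ into the deterministic drift flow $T_t^{(\kappa,0)}$ (for which $D_iD_j T_tF\ge 0$ is a direct linear-algebra computation) and the pure diffusion $T_t^{(0,\rho)}$ (for which the inequality follows from the one-dimensional pathwise comparison principle applied coordinatewise, since with $\kappa=0$ each $U(t,i)$ solves its own SDE even when the driving Brownian motions are correlated). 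This last point is the one place where the correlated-noise setting requires a moment's thought, but it goes through unchanged.
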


\begin{corollary}[Slepian's inequality for interacting diffusions]\label{C:Slepian}
Under the assumptions in Theorem \ref{T:MomComSDE2} and, in addition, 
\[\gamma_1(0)=\gamma_2(0),\] 
we have  that, for any numbers $a_k\in \R$, $i_k \in K$ for $k=1, \dots, N$, and  $t\geq 0$, 
\begin{equation}\label{E:Slepian1}
\bbP\left\{U_1(t, i_1) \leq a_1, \dots, U_1(t, i_N) \leq a_N    \right\}  \geq  \bbP\left\{U_2(t, i_1) \leq a_1, \dots, U_2(t, i_N) \leq a_N    \right\}. 
\end{equation}
In particular, we have that, for any $a\in \R$, $i_k \in K$ for $k=1, \dots, N$, and  $t\geq 0$, 
\begin{equation}\label{E:Slepian2}
\bbP \left\{ \max_{1\leq k \leq N} U_1(t, i_k) \leq a  \right\} \geq \bbP \left\{ \max_{1\leq k \leq N} U_2(t, i_k) \leq a  \right\}. 
\end{equation}
\end{corollary}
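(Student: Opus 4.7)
The plan is to establish \eqref{E:Slepian1} as a consequence of the $1$-time stochastic comparison principle from Theorem~\ref{T:MomComSDE2}, via a smooth approximation of the joint indicator $\prod_{k=1}^N \mathbf{1}_{[0,a_k]}(x_k)$ by a function in the cone $\bbF[C^{2,v}_{b,-}]$. The estimate \eqref{E:Slepian2} then follows from \eqref{E:Slepian1} by specializing to $a_1=\cdots=a_N=a$. Since $\rho(0)=0$ and $u_0\geq 0$ imply that $U_1,U_2$ are almost surely nonnegative, we may restrict to $a_k\geq 0$; otherwise both sides of \eqref{E:Slepian1} vanish and there is nothing to prove.

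For each $\epsilon>0$ I would construct smooth, bounded, nonincreasing, convex approximants $\phi_k^{\epsilon}:\R_+\to[0,1]$ with bounded derivatives through order two, such that $\phi_k^{\epsilon}\to \mathbf{1}_{[0,a_k]}$ pointwise as $\epsilon\to 0$. Set $g_{\epsilon}(x_1,\ldots,x_N):=\prod_{k=1}^N\phi_k^{\epsilon}(x_k)$. An elementary calculation shows that the mixed partial
\[
\partial^2_{x_j x_k}g_{\epsilon}(x)=(\phi_j^{\epsilon})'(x_j)\,(\phi_k^{\epsilon})'(x_k)\prod_{\ell\ne j,k}\phi_\ell^{\epsilon}(x_\ell)\ge 0 \qquad (j\ne k)
\]
is nonnegative (a product of two nonpositive factors times a nonneg one), while the pure partial
\[
\partial^2_{x_k x_k}g_{\epsilon}(x)=(\phi_k^{\epsilon})''(x_k)\prod_{\ell\ne k}\phi_\ell^{\epsilon}(x_\ell)\ge 0
\]
is nonneg by convexity of each $\phi_k^{\epsilon}$. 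Combined with boundedness and the sign $(\phi_k^{\epsilon})'\le 0$, this places $g_{\epsilon}$ in $\bbF[C^{2,v}_{b,-}]$, so the $1$-time comparison from Theorem~\ref{T:MomComSDE2} yields
\[
\E\bigl[g_{\epsilon}\bigl(U_1(t,i_1),\ldots,U_1(t,i_N)\bigr)\bigr]\ge \E\bigl[g_{\epsilon}\bigl(U_2(t,i_1),\ldots,U_2(t,i_N)\bigr)\bigr],
\]
and letting $\epsilon\to 0$ with bounded convergence proves \eqref{E:Slepian1}.

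The main obstacle is the construction of the approximants $\phi_k^{\epsilon}$ themselves. A direct smooth, convex, nonincreasing, bounded approximation of $\mathbf{1}_{[0,a_k]}$ on $\R_+$ does not exist: convexity forces the derivative to be nondecreasing, so a steep drop of size $\approx 1$ near $x=a_k$ over a window of length $\epsilon$ would force $\phi_k^{\epsilon}{}'(x)\le -1/\epsilon$ throughout $[0,a_k]$, driving $\phi_k^{\epsilon}(0)$ to infinity and contradicting the bound $\phi_k^{\epsilon}\le 1$. The hypothesis $\gamma_1(0)=\gamma_2(0)$ is expected to resolve this obstruction: matching the diagonal of the noise covariances should permit single-variable correction terms $\sum_k h_k(x_k)$ to be added to $g_{\epsilon}$ without altering the difference $\E[g_{\epsilon}(U_1)]-\E[g_{\epsilon}(U_2)]$, and a suitably convex choice of $h_k$ then allows the convexity requirement on $g_{\epsilon}$ to be weakened to a supermodularity-type condition (nonneg mixed partials only), which \emph{can} be met by a standard mollification of the product indicator. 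Making the cancellation of these single-variable corrections rigorous under the sole hypothesis $\gamma_1(0)=\gamma_2(0)$ is the technical heart of the proof.
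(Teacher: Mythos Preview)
You correctly identify the obstruction: no bounded, nonincreasing, \emph{convex} $C^2$ approximant of $\mathbf{1}_{[0,a]}$ exists on $\R_+$, so Theorem~\ref{T:MomComSDE2} cannot be invoked on $g_\epsilon$ as a black box. Your proposed resolution, however, has a genuine gap. You claim that single-variable terms $\sum_k h_k(x_k)$ can be added to $g_\epsilon$ without altering $\E[g_\epsilon(U_1)]-\E[g_\epsilon(U_2)]$, on the grounds that $\gamma_1(0)=\gamma_2(0)$. This would require $\E[h_k(U_1(t,i_k))]=\E[h_k(U_2(t,i_k))]$, i.e.\ equality of the one-dimensional marginals of $U_1$ and $U_2$. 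But the marginal law of $U_\ell(t,i)$ depends on the entire covariance $\gamma_\ell$, not just on $\gamma_\ell(0)$: the drift $\kappa\sum_j p_{i,j}(U_\ell(t,j)-U_\ell(t,i))$ couples the $i$th coordinate to all others, and the joint evolution of $(U_\ell(t,j))_{j}$ feels all of $\gamma_\ell(\cdot)$. A direct computation of $\E[U_\ell(t,i)^2]$ via It\^o's formula already produces the cross term $\E[U_\ell(t,i)U_\ell(t,j)]$, which depends on $\gamma_\ell(i-j)$. So the corrections do not cancel and this route does not close.

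The paper uses the hypothesis $\gamma_1(0)=\gamma_2(0)$ at the \emph{generator} level rather than at the level of expectations. One returns to the finite-dimensional setting of Assumption~\ref{A:FiniteCc} and the integration-by-parts identity $T_t^{(1)}-T_t^{(2)}=\int_0^t T_s^{(1)}\bigl[G^{(1)}-G^{(2)}\bigr]T_{t-s}^{(2)}\,\ud s$ from the proof of Theorem~\ref{T:FiniteCc2}. Since the drift and the diagonal second-order coefficients $\tfrac12\rho(x_i)^2\gamma_\ell(0)$ coincide for $\ell=1,2$, the difference of generators reduces to
\[
G^{(1)}-G^{(2)}=\tfrac12\sum_{i\ne j}\rho(x_i)\rho(x_j)\bigl[\gamma_1(i-j)-\gamma_2(i-j)\bigr]D_iD_j,
\]
which involves only \emph{off-diagonal} derivatives. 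One therefore drops convexity entirely and takes $\phi_{\epsilon,k}\in C^2$ merely nonnegative, nonincreasing, bounded, and converging to $\mathbf{1}_{(-\infty,a_k]}$; then $F_\epsilon=\prod_k\phi_{\epsilon,k}$ has $D_iD_jF_\epsilon\ge 0$ for $i\ne j$ automatically. Following the proof of Theorem~\ref{T:FiniteCc} with this observation gives $\E[F_\epsilon(U_1(t,\cdot))]\ge\E[F_\epsilon(U_2(t,\cdot))]$, bounded convergence handles $\epsilon\to 0$, and Propositions~\ref{P:rhoBD}--\ref{P:Approx1} extend from finite $K$ to general $K$. This is precisely the supermodularity relaxation you anticipated, but it is enforced by cancelling the diagonal of $G^{(1)}-G^{(2)}$, not by adding correction terms whose expectations match.
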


At the very core of the chain of arguments is the following comparison results for the finite dimensional SDE with $C_c^2(\R_+)$ diffusion coefficient. Here $C_c^2(\R_+)=C_c^2(\R_+;\R)$ refers to the functions
defined on $\R_+$, having compact support and continuous second derivative.

\begin{assumption}\label{A:FiniteCc}
 In the SDE \eqref{E:SDE1}, we assume that (i) $\rho\in C_c^2(\R_+)$ and $\rho(u_0(i))\ne 0$ for some $i\in K$; and (ii) the cardinality of the index set $K$ is finite.
\end{assumption}

\begin{theorem}\label{T:FiniteCc}
Under Assumption \ref{A:FiniteCc}, 
the statement in Theorem \ref{T:MomComSDE1} is true with $\bbF[C_{p,+}^{2,v}]$, $\bbF[C^{2,v}_{b,-}]$, and $\bbF[C^{2,v}_p]$ replaced by $\mathbb{F}[C_{+}^{2,v}]$, $\mathbb{F}[C_{-}^{2,v}]$ and $\mathbb{F}[C^{2,v}]$, respectively.
\end{theorem}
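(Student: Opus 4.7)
The plan is an It\^o-interpolation argument for the $1$-time statement, bootstrapped to $n$-time by the Markov property and backward induction, conditional on a semigroup-preservation property for the cones $C^{2,v}$, $C^{2,v}_+$, $C^{2,v}_-$ in the spirit of \cite{CFG96}. Under Assumption \ref{A:FiniteCc}, \eqref{E:SDE1} is a finite-dimensional It\^o SDE whose drift is linear in the state and whose diffusion is $C^2$ with compact support (hence bounded together with its first two derivatives); standard theory therefore gives a unique strong solution $U(t,\cdot;u_0)$ that is twice continuously differentiable in $u_0$ and has finite moments of all orders. Write $\calL^{(i)}$ and $P^{(i)}_t$ for the generator and semigroup of $U_i$; since the drifts agree, the difference of generators is purely second-order,
\[
(\calL^{(1)}-\calL^{(2)})g(x) \;=\; \tfrac12 \sum_{i,j\in K} \gamma(i-j)\,\bigl[\rho_1(x_i)\rho_1(x_j)-\rho_2(x_i)\rho_2(x_j)\bigr]\,\partial_{x_i}\partial_{x_j}g(x).
\]
Fix $T>0$ and $F\in\bbF[C^{2,v}]$, and set $v(s,x):=(P^{(2)}_{T-s}F)(x)$; then $v\in C^{1,2}$ and $\partial_s v+\calL^{(2)}v=0$ with $v(T,\cdot)=F$. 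Applying It\^o's formula to $v(s,U_1(s))$ and taking expectation (the local martingale vanishes by boundedness of $\nabla v$ and of $\rho$),
\[
\E[F(U_1(T))]-\E[F(U_2(T))] \;=\; \E\!\int_0^T (\calL^{(1)}-\calL^{(2)})v(s,U_1(s))\,ds.
\]
The integrand is nonnegative term by term: $\gamma\ge 0$ by Assumption \ref{A:SDE}(iii); $\rho_1(a)\rho_1(b)-\rho_2(a)\rho_2(b)\ge 0$ for $a,b\ge 0$ under either sign hypothesis on $\rho_1,\rho_2$; and $\partial_{x_i}\partial_{x_j}v\ge 0$ provided $P^{(2)}_\tau$ sends $C^{2,v}$ into itself. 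The same It\^o step covers $\bbF[C^{2,v}_\pm]$ once the corresponding subcones are known to be preserved.

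The main obstacle is this semigroup-preservation lemma: for each $\tau\ge 0$, $P^{(i)}_\tau$ maps $C^{2,v}$, $C^{2,v}_+$ and $C^{2,v}_-$ into themselves. Pathwise monotonicity of the flow in the initial datum (from the cooperative graph-Laplacian drift together with the purely diagonal dependence of the diffusion) yields $\partial_{u_0(k)}U(t,i;u_0)\ge 0$ almost surely, which transmits the sign of the first derivative from $F$ to $P^{(i)}_\tau F$. The delicate part is the sign of the mixed second partial derivatives of $P^{(i)}_\tau F$: a naive differentiation under the expectation produces a term involving the second variation of the flow, which is not sign-definite. Here is where the $C_c^2$ hypothesis on $\rho$ is essential: boundedness of $\rho$ together with its derivatives places all variational processes in $L^p$ for every $p$, enabling the coupling/approximation scheme of \cite{CFG96} (exploiting linearity of the drift and the nonnegativity and nonnegative-definiteness of $\gamma$) to deliver the required nonnegativity of the mixed second derivatives on each of the three cones.

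For $n\ge 2$ and $F_1,\dots,F_n\in\bbF[C^{2,v}_+]$ with $0<t_1<\cdots<t_n$, I proceed by backward induction on $n$. Set $\widetilde F^{(i)}_n:=F_n$ and recursively $\widetilde F^{(i)}_\ell(x):=F_\ell(x)\bigl(P^{(i)}_{t_{\ell+1}-t_\ell}\widetilde F^{(i)}_{\ell+1}\bigr)(x)$; the Markov property gives $\E[\prod_\ell F_\ell(U_m(t_\ell))]=(P^{(m)}_{t_1}\widetilde F^{(m)}_1)(u_0)$ for $m=1,2$. Multiplication preserves $\bbF[C^{2,v}_+]$ by Leibniz, since $(fg)_{ij}=f_{ij}g+f_ig_j+f_jg_i+fg_{ij}\ge 0$ when all factors of $f$ and $g$ up to order two are $\ge 0$, and $P^{(i)}_\tau$ preserves $\bbF[C^{2,v}_+]$ by the lemma, so every $\widetilde F^{(i)}_\ell\in\bbF[C^{2,v}_+]$. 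Adding and subtracting $\E[\prod_{\ell<n}F_\ell(U_2(t_\ell))\widetilde F^{(1)}_{n-1}(U_2(t_{n-1}))]$ decomposes $E_1-E_2$ into an $(n-1)$-time difference with $\widetilde F^{(1)}_{n-1}$ replacing $F_{n-1}$ (nonneg by the induction hypothesis) plus the term $\E[\prod_{\ell<n}F_\ell(U_2(t_\ell))F_{n-1}(U_2(t_{n-1}))(P^{(1)}_{t_n-t_{n-1}}F_n-P^{(2)}_{t_n-t_{n-1}}F_n)(U_2(t_{n-1}))]$, which is nonneg by the $1$-time result applied pointwise to $F_n$. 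The $\bbF[C^{2,v}_-]$ case is analogous, since multiplication likewise preserves $C^{2,v}_-$ (the two cross-terms are products of nonpositive first derivatives, hence still $\ge 0$), and the $1$-time $\bbF[C^{2,v}]$ case is already in hand from the It\^o interpolation above.
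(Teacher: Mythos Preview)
Your architecture is the same as the paper's: reduce the one-time comparison to nonnegativity of $D_iD_j(P^{(2)}_\tau F)$ via the integration-by-parts/It\^o-interpolation identity for $P^{(1)}_t-P^{(2)}_t$, then bootstrap to $n$ times by the Markov property together with closure of $C^{2,v}_\pm$ under multiplication (your Leibniz computation is exactly the paper's Step~3). The induction and the generator-difference computation are correct.

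Where you are thinner than the paper is exactly the step you flag as the main obstacle---the semigroup-preservation lemma---and it is worth knowing what the concrete mechanism is rather than handing it to ``the coupling/approximation scheme of \cite{CFG96}''. The paper does \emph{not} differentiate the flow and wrestle with the second-variation process; instead it uses Trotter's product formula to write $P^{(\kappa,\rho)}_t=\lim_k\bigl[P^{(\kappa,0)}_{t/k}P^{(0,\rho)}_{t/k}\bigr]^k$ and checks each factor preserves $C^{2,v}$. For the pure drift, $P^{(\kappa,0)}_tF(z)=F(e^{\kappa At}z)$ and the chain rule gives $D_iD_j$ as a nonnegative combination of $D_kD_mF$. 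For the pure diffusion, each coordinate $U(t,i)$ solves its own one-dimensional SDE (the correlation of the $M_i$'s is irrelevant here), so one runs four coupled copies from the corners $u^0,u^1,u^2,u^{12}$ of a rectangle in the $(i,j)$-plane, applies the classical one-dimensional pathwise comparison, and reads off $D_iD_j(P^{(0,\rho)}_tF)\ge0$ from the finite-difference characterization $F(U^{12})-F(U^1)-F(U^2)+F(U^0)\ge0$. The $C_c^2$ hypothesis on $\rho$ is cashed in here to keep the process in a box so all expectations are finite. One small caution on your citation: \cite{CFG96} treats \emph{independent} Brownian motions, so a bare reference does not literally cover the correlated-$\gamma$ case; the paper notes (Remark after the theorem statement) that the Trotter/rectangle argument goes through unchanged, and this is precisely because only one-dimensional comparison is invoked.
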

 
\begin{theorem}\label{T:FiniteCc2}
Under Assumption \ref{A:FiniteCc}, 
the statement in Theorems \ref{T:MomComSDE2} is true with $\bbF[C_{p,+}^{2,v}]$, $\bbF[C^{2,v}_{b,-}]$, and $\bbF[C^{2,v}_p]$ replaced by $\mathbb{F}[C_{+}^{2,v}]$, $\mathbb{F}[C_{-}^{2,v}]$ and $\mathbb{F}[C^{2,v}]$, respectively.
\end{theorem}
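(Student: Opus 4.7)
The plan is to mirror the semigroup--generator argument that powers Theorem \ref{T:FiniteCc}, exploiting the fact that here the two generators $L_1$ and $L_2$ share the same drift and differ only in the noise diffusion. Under Assumption \ref{A:FiniteCc}, $|K|<\infty$ and $\rho\in C_c^2(\R_+)$, so \eqref{E:SDE1} reduces to a finite-dimensional SDE with smooth compactly supported coefficients, and each driving family $\{M_i^{(k)}\}_{i\in K}$ gives rise to a well-defined Feller semigroup $S_t^{(k)}$ acting on $C_b^2(\R_+^{|K|})$. A direct application of \Itos formula shows the generator is
\begin{align*}
L_k F(u)= \kappa \sum_{i,j\in K} p_{i,j}(u_j-u_i)\partial_i F(u) + \frac{1}{2}\sum_{i,j\in K}\rho(u_i)\rho(u_j)\gamma_k(i-j)\partial_i\partial_j F(u),
\end{align*}
so the drifts cancel in the difference, yielding
\begin{align*}
(L_1-L_2)F(u)=\frac{1}{2}\sum_{i,j\in K}\rho(u_i)\rho(u_j)\bigl(\gamma_1(i-j)-\gamma_2(i-j)\bigr)\partial_i\partial_j F(u).
\end{align*}
Since $\rho\geq 0$ by Assumption \ref{A:SDE}(ii), $\gamma_1(k)\geq\gamma_2(k)$ for every $k\in K$ by hypothesis, and $\partial_i\partial_j F\geq 0$ for every $F\in\mathbb{F}[C^{2,v}]$, each summand is nonnegative, so $(L_1-L_2)F\geq 0$ pointwise on $\R_+^{|K|}$ for every $F$ in any of the three relevant cones.

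Next I would apply the standard semigroup comparison identity
\begin{align*}
S_t^{(1)}F(u)-S_t^{(2)}F(u)=\int_0^t S_{t-s}^{(2)}\,(L_1-L_2)\,S_s^{(1)}F(u)\,\ud s,
\end{align*}
obtained by differentiating $s\mapsto S_{t-s}^{(2)}S_s^{(1)}F$ and integrating in $s$. Two ingredients then finish the 1-time comparison. First, cone preservation under $S_s^{(1)}$: for $F\in\mathbb{F}[C^{2,v}]$ (respectively $\mathbb{F}[C_+^{2,v}]$ or $\mathbb{F}[C_-^{2,v}]$), $S_s^{(1)}F$ still has nonnegative second partials (respectively also retains the sign of its first partials). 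This is precisely the structural stability property already needed in the proof of Theorem \ref{T:FiniteCc}, verified by differentiating the SDE flow with respect to its initial data and inspecting the signs in the resulting variational system, where the boundedness of $\rho$, $\rho'$, $\rho''$ under Assumption \ref{A:FiniteCc} is crucial. Second, $S_{t-s}^{(2)}$ preserves nonnegativity, being a genuine Markov expectation. Combining these with $(L_1-L_2)S_s^{(1)}F\geq 0$, the right-hand side of the identity is nonnegative, giving $S_t^{(1)}F(u)\geq S_t^{(2)}F(u)$ and hence the 1-time comparison over each of $\mathbb{F}[C^{2,v}]$, $\mathbb{F}[C_+^{2,v}]$, and $\mathbb{F}[C_-^{2,v}]$.

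To extend to the $n$-time statement for $F_1,\dots,F_n\in\mathbb{F}[C_+^{2,v}]$ (respectively $\mathbb{F}[C_-^{2,v}]$), I would iterate via the Markov property, exploiting closure of these cones under pointwise multiplication and under composition with $S_t^{(k)}$: for $0<t_1<\cdots<t_n$, conditioning on $U(t_{n-1})$ reduces the $n$-time expectation to an $(n-1)$-time expectation against the function $F_{n-1}\cdot S_{t_n-t_{n-1}}^{(k)}F_n$, which still belongs to the cone by the preservation and multiplicative-closure properties, so induction on $n$ closes the loop. I expect the main obstacle to be precisely the cone preservation step, in particular the propagation of the sign of first partial derivatives for $\mathbb{F}[C_\pm^{2,v}]$; this is the technical heart of the Cox--Fleischmann--Greven framework \cite{CFG96}, and extracting the sign structure from the variational equations requires care. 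Everything else is either a routine semigroup manipulation or a direct transplant of structural lemmas already developed for Theorem \ref{T:FiniteCc}.
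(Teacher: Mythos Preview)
Your overall architecture matches the paper exactly: compute the generator difference, observe that under the hypothesis $\gamma_1\ge\gamma_2$ and $\rho\ge 0$ every term is nonnegative provided the second partials of the test function are, invoke the semigroup interpolation identity, reduce everything to the cone-preservation statement $D_iD_j T_t F\ge 0$, and then iterate via the Markov property and multiplicative closure of $\mathbb{F}[C_\pm^{2,v}]$ for the $n$-time case. The paper in fact proves Theorems \ref{T:FiniteCc} and \ref{T:FiniteCc2} simultaneously, so the cone-preservation lemma you want to import from the proof of Theorem \ref{T:FiniteCc} is indeed available.

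The one substantive difference is how that cone-preservation step is actually carried out. You propose to differentiate the stochastic flow in the initial data and read off signs from the first- and second-order variational systems. The paper instead uses Trotter's product formula to factor the semigroup as a limit of alternating compositions of the pure-drift semigroup $T_t^{(\kappa,0)}$ (a deterministic linear map $z\mapsto e^{\kappa At}z$, for which the sign check is a two-line computation) and the pure-diffusion semigroup $T_t^{(0,\rho)}$ (where the coordinates decouple into one-dimensional SDEs and the classical pathwise comparison theorem gives the required monotonicity in the initial point). Your variational-equation route is plausible in principle, but extracting nonnegativity of mixed second derivatives from the second-order variational system is genuinely delicate, whereas the Trotter decomposition sidesteps this entirely by reducing to one-dimensional comparison. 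This is exactly the ``main obstacle'' you flagged, and the paper's resolution is more elementary than the one you anticipate.
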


Both Theorem \ref{T:MomComSDE1} and Theorem \ref{T:FiniteCc} are essentially covered by Cox, Fleischmann and Greven \cite{CFG96}. The main difference is that Theorem \ref{T:MomComSDE1} covers a much richer family of functions and another difference is that we have correlated, instead of independent, Brownian motions; See Remark \ref{R:SDEs} below for more details. 

\bigskip 
  
\subsection{Outline of the paper}
This paper is organized as follows: After some definitions, notation and preliminaries in Section \ref{S:Def}, we provide the approximation procedure which shows that SHE \eqref{E:SHE} with rough initial data and noise whose spatial correlation only satisfies Dalang's condition \eqref{E:Dalang} can be approximated by systems of infinite dimensional SDEs (i.e., interacting diffusions on the $\ud$-dimensional lattice) in Section \ref{S:Aprox}. 
Combining the approximation procedures and the comparison theorems for infinite dimensional SDEs (Theorems \ref{T:MomComSDE1} and \ref{T:MomComSDE2}, and Corollary \ref{C:Slepian}) proves the main theorems \ref{T:MomComp} and \ref{T2:MomComp} and also Slepian's inequality for SPDEs -- Corollary \ref{C:SPDE:Slepian} in Section \ref{SS:Step4}. 
It remains to establish Theorems  \ref{T:MomComSDE1} and \ref{T:MomComSDE2}, and Corollary \ref{C:Slepian}, which is done in Section \ref{S:MomComSDE}. We first prove the existence and uniqueness result --- Theorem \ref{T:ExtUniqSDE1} --- in Section \ref{SS:Existence}. Then we will prove Theorems \ref{T:MomComSDE1} and \ref{T:MomComSDE2} by first showing that a system of infinite dimensional SDEs can be approximated by systems of finite dimensional SDEs with a nice $\rho$ in Section \ref{SS:Aprox2} and then obtaining the comparison theorems for finite dimensional SDEs following the procedure of Cox, Fleischmann and Greven \cite{CFG96} in Section \ref{SS:Finite}.
With these preparations, we proceed to prove 
Theorems \ref{T:MomComSDE1} and \ref{T:MomComSDE2} and Corollary \ref{C:Slepian} in Section \ref{SS:4.4}.
Finally, in Section \ref{S:ExApp}, we give several examples to cover those in (E-1) -- (E-5) above and one application of our approximation results to give another straightforward proof for the weak sample path comparison principle.

\section{Some definitions, notation and preliminaries}\label{S:Def}
Throughout this paper, $\Norm{\cdot}_p$ denotes the $L^p(\Omega)$-norm, $\bbN:=\{0,1,2,\cdots\}$, $\Lip_\rho$ refers to the Lipschitz constant for $\rho$,  
$D_i:=\frac{\partial}{\partial x_i}$, and $\R_+:=[0,\infty)$.

\bigskip
Recall that a {\em spatially homogeneous Gaussian noise that is white in time} is an
$L^2(\Omega)$-valued mean zero Gaussian process on a complete probability space $\left(\Omega,\calF,\bbP\right)$
\[
\left\{F(\psi):\: \psi\in C_c^{\infty}\left([0,\infty)\times\R^{d}\right)\:\right\},
\]
such that
\[
\E\left[F(\psi)F(\phi)\right] = \int_0^{\infty} \ud s\iint_{\R^{2d}}\psi(s,x)\phi(s,y)f(x-y)\ud x\ud y.
\]
Let $\calB_b(\R^d)$ be the collection of Borel measurable sets with finite Lebesgue measure.
As in Dalang-Walsh theory \cite{Dalang99,Walsh},
one can extend $F$ to a $\sigma$-finite $L^2(\Omega)$-valued martingale measure $B\mapsto F(B)$
defined for $B\in \calB_b(\R_+\times\R^d)$, where $\R_+:=[0,\infty)$. Then define
\[
M_t(B) :=F\left([0,t]\times B \right), \quad B\in\calB_b(\R^d).
\]
Let $(\calF_t,t\ge 0)$ be the natural filtration generated by $M_\cdot(\cdot)$
and augmented by all $\bbP$-null sets $\calN$ in $\calF$, i.e.,
\[
\calF_t := \sigma\left(M_s(A):\: 0\le s\le t,
A\in\calB_b\left(\R^d\right)\right)\vee
\calN,\quad t\ge 0,
\]
Then for any adapted, jointly measurable (with respect to
$\calB\left((0,\infty)\times\R^d\right)\times\calF$) random field $\{X(t,x): t>0,x\in\R^d\}$ such that for all integers $p\ge 2$,
\[
\int_0^\infty\ud s\iint_{\R^{2d}}\ud x\ud y\:
\Norm{X(s,y)X(s,x)}_{\frac{p}{2}} f(x-y) <\infty,
\]
the stochastic integral
\[
\int_0^\infty \int_{\R^d} X(s,y)M(\ud s,\ud y)
\]
is well-defined in the sense of Dalang-Walsh. Here we only require the joint-measurability instead of
predictability; see  in \cite[Proposition 2.2]{CK15SHE} and \cite[Proposition 3.1]{ChenDalang13Heat}.

Let $J_0(t,x)$ denote the solution to the homogeneous equation 
\begin{align}
 \label{E:J0}
J_0(t,x) := (\mu * G(t,\cdot))(x) =\int_{\R^d} G(t,x-y)\mu(\ud y),
\end{align}
and $I(t,x)$ be the stochastic integral in the mild form \eqref{E:mild}.
Hence, the mild form \eqref{E:mild} can be written as $u(t,x)=J_0(t,x) + I(t,x)$.

\begin{definition}\label{D:Solution}
A process $u=\left(u(t,x),\:(t,x)\in(0,\infty)\times\R^d \right)$  is called a {\it
random field solution} to \eqref{E:SHE} if
\begin{enumerate}[(1)]
 \item $u$ is adapted, i.e., for all $(t,x)\in(0,\infty)\times\R^d$, $u(t,x)$ is
$\calF_t$-measurable;
\item $u$ is jointly measurable with respect to
$\calB\left((0,\infty)\times\R^d\right)\times\calF$;
\item $\Norm{I(t,x)}_2<+\infty$ for all $(t,x)\in(0,\infty)\times\R^d$;
\item  $I$ is $L^2(\Omega)$-continuous, i.e., the function $(t,x)\mapsto I(t,x)$ mapping $(0,\infty)\times\R^d$ into
$L^2(\Omega)$ is continuous;
\item $u$ satisfies \eqref{E:mild} a.s.,
for all $(t,x)\in(0,\infty)\times\R^d$.
\end{enumerate}
\end{definition}

Existence and uniqueness of a random field solution for bounded initial data is covered by classical Dalang-Walsh theory \cite{Dalang99,Walsh}. For rough initial data, this is established in \cite{ChenDalang13Heat,CH18Comparison, CK15SHE,Huang}. A key tool for dealing the rough initial data is the following moment formula. We need first introduce some notation.
Denote
\begin{align}\label{E:k}
k(t):=\int_{\R^d}f(z)G(t,z)\ud z.
\end{align}
By the Fourier transform, this function can be written in the following form
\begin{align}\label{E:k2}
k(t):=(2\pi)^{-d} \int_{\R^d}\hat{f}(\ud\xi)\exp\left(-\frac{t|\xi|^2}{2}\right).
\end{align}
Define $h_0(t):=1$ and for $n\ge 1$,
\begin{align}\label{E:hn}
 h_n(t)= \int_0^t \ud s \: h_{n-1}(s) k(t-s).
\end{align}
% Let
% 
% This function is defined through the correlation function $f$. The following lemma
% tells us that this function has an exponential bound.

\begin{theorem}[Moment bounds, Theorem 1.7 of \cite{CH18Comparison}]
\label{T:Mom}
Under Dalang's condition \eqref{E:Dalang},
if the initial data $\mu$ is a signed measure that satisfies \eqref{E:J0finite},
then the solution $u$ to \eqref{E:SHE} for any given $t>0$ and $x\in\R^d$ is in $L^p(\Omega)$, $p\ge 2$, and
\begin{align}\label{E:Mom}
 \Norm{u(t,x)}_p \le
 \left[\Vip+\sqrt{2} \left(|\mu|*G(t,\cdot)\right)(x) \right]H\left(t;\gamma_p\right)^{1/2},
\end{align}
where $\Vip=|\rho(0)|/\Lip_\rho$, $\gamma_p=32p\Lip_\rho^2$, $\LIP_\rho>0$ is the Lipschitz constant for $\rho$, and 
\begin{align}\label{E:H}
 H(t;\gamma):= \sum_{n=0}^\infty \gamma^n h_n(t),\qquad\text{for all $\gamma\ge 0$.}
\end{align}
Moreover, if the {\em strengthened Dalang's condition} \eqref{E:DalangAlpha} is satisfied, namely, 
\begin{align}\label{E:DalangAlpha}
\int_{\R^d}\frac{\hat{f}(\ud\xi)}{\left(1+|\xi|^2\right)^{1-\alpha}}<\infty, \quad\text{for some $\alpha\in(0,1]$,}
\end{align}
then when $p\ge 2$ is large enough, there exists some constant $C>0$ such that 
\begin{align}\label{E:MomAlpha}
\Norm{u(t,x)}_p \leq C \Big[\: \Vip+\left(|\mu|*G(t,\cdot)\right)(x) \Big] \exp\left(C \:\Lip_\rho^{2/\alpha} p^{1/\alpha} t\right)\,.
\end{align}
% for all $t>0$ and $x\in\R^d$.
\end{theorem}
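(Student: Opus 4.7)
The plan is to prove the bound by a Picard iteration in $L^p(\Omega)$ combined with a Burkholder-type estimate for the Walsh stochastic integral. First I would define $u_0(t,x) := J_0(t,x)$ and inductively
\[
u_{n+1}(t,x) := J_0(t,x) + \int_0^t\int_{\R^d} G(t-s,x-y)\rho(u_n(s,y))\,M(\ud s,\ud y),
\]
and show that $\{u_n\}$ converges in $L^p(\Omega)$, uniformly on compacts. The target inequality \eqref{E:Mom} should first be proved for each $u_n$ and then inherited by the limit.

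The core estimate is to apply the Burkholder inequality (with sharp constant $\sqrt{4p}$) to the Walsh integral, giving
\[
\Norm{u_{n+1}(t,x)-J_0(t,x)}_p^2 \le 4p\int_0^t\ud s\iint_{\R^{2d}} G(t-s,x-y)G(t-s,x-y')\Norm{\rho(u_n(s,y))\rho(u_n(s,y'))}_{p/2} f(y-y')\ud y\ud y',
\]
bound the mixed $L^{p/2}$-norm by Cauchy-Schwarz into the product $\Norm{\rho(u_n(s,y))}_p\Norm{\rho(u_n(s,y'))}_p$, and use the Lipschitz bound $|\rho(u)|\le \Lip_\rho(\Vip+|u|)$. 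Introducing the envelope $\mathcal{A}_n(t,x):=\Vip+\Norm{u_n(t,x)}_p$ and using $2ab\le a^2+b^2$ along with the identity $\iint G(t-s,x-y)G(t-s,x-y')f(y-y')\ud y\ud y' = k\bigl(2(t-s)\bigr)$ (via the semigroup property $G(t,\cdot)*G(t,\cdot)=G(2t,\cdot)$), one should arrive at a recursion of the shape
\[
\mathcal{A}_{n+1}(t,x)^2 \le 2\bigl[\Vip+(|\mu|*G(t,\cdot))(x)\bigr]^2 + \gamma_p\int_0^t k(t-s)\,\sup_{y}\mathcal{A}_n(s,y)^2\,\ud s,
\]
after absorbing the constant $32p\Lip_\rho^2$ into $\gamma_p$. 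Iterating this recursion produces the series $\sum_{n\ge 0}\gamma_p^n h_n(t) = H(t;\gamma_p)$, yielding \eqref{E:Mom}.

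For the strengthened bound \eqref{E:MomAlpha} under \eqref{E:DalangAlpha}, the plan is to sharpen the analysis of $H(t;\gamma)$. Write $H(t;\gamma)=\sum_{n\ge 0}\gamma^n h_n(t)$ and extract quantitative decay of $k(t)$ from \eqref{E:DalangAlpha}: the representation \eqref{E:k2} together with the spectral integrability condition gives $k(t)\le C\,t^{\alpha-1}$ for small $t$, so the $n$-fold convolution $h_n(t)$ is bounded by a Mittag-Leffler type term $C^n t^{n\alpha}/\Gamma(n\alpha+1)$. Summing against $\gamma^n$ then produces the exponential growth rate $\exp(C\gamma^{1/\alpha}t)$, and tracking $\gamma=32p\Lip_\rho^2$ gives \eqref{E:MomAlpha}.

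The main obstacle I anticipate is preserving the point-wise dependence on $x$ through the initial data profile $J_0(t,x)=(\mu*G(t,\cdot))(x)$ rather than losing it to a spatial supremum, because rough initial data in the sense of \eqref{E:J0finite} can be unbounded and concentrated. The trick is to split $u_n=J_0+I_n$ and apply the recursion only to the stochastic piece $I_n$, whose $L^p$-norm can be estimated uniformly in $x$ up to a factor of $\Vip+\sup_y(|\mu|*G(s,\cdot))(y)$ controlled via monotonicity of $t\mapsto(|\mu|*G(t,\cdot))(x)$ and the factorization $G(t-s,x-y)(\mu*G(s,\cdot))(y)\le \text{const}\cdot G(t-s,x-y)(\mu*G(t,\cdot))(x)$ through elementary Gaussian computations; getting the factor $\sqrt{2}$ in front of $(|\mu|*G(t,\cdot))(x)$ requires this careful book-keeping.
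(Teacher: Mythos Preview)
The paper does not prove this theorem at all: it is stated in Section~2 as a preliminary and attributed verbatim to Theorem~1.7 of Chen--Huang \cite{CH18Comparison}. So there is no ``paper's own proof'' to compare against; your proposal is an attempt to reconstruct the argument from the cited reference.

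Your overall architecture---Picard iteration, Burkholder with constant $4p$, Lipschitz bound $|\rho(u)|\le \Lip_\rho(\Vip+|u|)$, and iteration producing the series $H(t;\gamma_p)$---is indeed the skeleton of the proof in \cite{CH18Comparison} (and of the earlier one-dimensional version in \cite{ChenDalang13Heat}). The treatment of the strengthened condition via $k(t)\le Ct^{\alpha-1}$ and Mittag-Leffler asymptotics is also the standard route.

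However, your handling of the rough initial data contains a genuine gap. The recursion you write,
\[
\mathcal{A}_{n+1}(t,x)^2 \le 2\bigl[\Vip+(|\mu|*G(t,\cdot))(x)\bigr]^2 + \gamma_p\int_0^t k(t-s)\,\sup_{y}\mathcal{A}_n(s,y)^2\,\ud s,
\]
takes a spatial supremum over $y$, but for rough $\mu$ (e.g.\ $\mu=\delta_0$) the quantity $J_0(s,y)=(|\mu|*G(s,\cdot))(y)$ is unbounded in $y$ and blows up as $s\downarrow 0$, so $\sup_y\mathcal{A}_n(s,y)=\infty$ and the recursion is vacuous. You recognize this in your final paragraph, but the fix you propose---a pointwise inequality of the form $G(t-s,x-y)\,J_0(s,y)\le C\,G(t-s,x-y)\,J_0(t,x)$---is false: it would force $J_0(s,y)\le C\,J_0(t,x)$ uniformly in $y$, which again fails for rough data. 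The actual mechanism in \cite{CH18Comparison} and \cite{CK15SHE} is not a pointwise bound but an \emph{integrated} one: the semigroup identity $\int_{\R^d}G(t-s,x-y)J_0(s,y)\,\ud y=J_0(t,x)$ is combined with a careful lemma (see e.g.\ Lemma~2.2 of \cite{CK15SHE} or Lemma~3.5 of \cite{CH18Comparison}) showing that the double integral $\iint G(t-s,x-y)G(t-s,x-y')J_0(s,y)J_0(s,y')f(y-y')\,\ud y\,\ud y'$ is bounded by $J_0(t,x)^2$ times a kernel factor depending only on $t-s$. That lemma is the missing ingredient that keeps the $x$-dependence through $J_0(t,x)$ without ever taking a spatial supremum; it is where the factor $\sqrt{2}$ actually enters.
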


Note that $H(t;\gamma)$ in \eqref{E:H} has genuine exponential growth as proved in the following lemma:

\begin{lemma}[Lemma 2.5 in \cite{CK15SHE} or Lemma 3.8 in \cite{BC16}]
\label{L:EstHt}
For all $t\ge 0$ and $\gamma\ge 0$, recalling that $\Upsilon(\beta)$ is defined in \eqref{E:Dalang}, it holds that
\begin{align}
\label{E:Var} 
\limsup_{t\rightarrow\infty} \frac{1}{t}\log H(t;\gamma)\le \inf\left\{\beta>0:  \:\Upsilon\left(2\beta\right) < \frac{1}{2\gamma}\right\}.
\end{align}
\end{lemma}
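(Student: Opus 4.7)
The plan is to pass through the Laplace transform in $t$: compute $\widetilde{H}(\beta;\gamma):=\int_0^\infty e^{-\beta t} H(t;\gamma)\,\ud t$, identify its region of finiteness in $\beta$ with the condition $\Upsilon(2\beta)<1/(2\gamma)$, and then convert finiteness of the Laplace transform into the desired exponential upper bound using monotonicity of $t\mapsto H(t;\gamma)$.

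First I would compute $\widetilde{k}(\beta)$. Using the spectral representation \eqref{E:k2} and Tonelli (everything is nonnegative),
\[
\widetilde{k}(\beta)=(2\pi)^{-d}\int_{\R^d}\hat f(\ud\xi)\int_0^\infty e^{-t(\beta+|\xi|^2/2)}\ud t
=(2\pi)^{-d}\int_{\R^d}\frac{2\,\hat f(\ud\xi)}{2\beta+|\xi|^2}=2\,\Upsilon(2\beta).
\]
From the convolution recursion \eqref{E:hn} and $h_0\equiv 1$ we get $\widetilde{h}_n(\beta)=\widetilde{k}(\beta)^n/\beta$, whence, again by Tonelli in the series,
\[
\widetilde{H}(\beta;\gamma)=\sum_{n\ge 0}\gamma^n\widetilde{h}_n(\beta)=\frac{1}{\beta\bigl(1-2\gamma\,\Upsilon(2\beta)\bigr)}\quad\text{whenever}\quad \Upsilon(2\beta)<\frac{1}{2\gamma}.
\]

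Second, I would establish that $t\mapsto h_n(t)$ is nondecreasing for each $n$, hence so is $t\mapsto H(t;\gamma)$. This is an easy induction: $h_0\equiv 1$ and $h_1(t)=\int_0^t k(s)\,\ud s$ are trivially nondecreasing; if $h_{n-1}$ is nondecreasing, then rewriting $h_n(t)=\int_0^t h_{n-1}(t-u)\,k(u)\,\ud u$ and comparing $h_n(t')-h_n(t)$ for $t'>t$ expresses it as a sum of two nonnegative integrals (extension of the domain, plus a pointwise increase of $h_{n-1}(t'-u)-h_{n-1}(t-u)\ge 0$ multiplied by $k\ge 0$).

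Finally, I would combine these two ingredients. Fix any $\beta$ with $\Upsilon(2\beta)<1/(2\gamma)$ and, using monotonicity,
\[
\widetilde{H}(\beta;\gamma)\ge\int_t^{2t}e^{-\beta s}H(s;\gamma)\,\ud s\ge H(t;\gamma)\cdot\frac{e^{-\beta t}-e^{-2\beta t}}{\beta},
\]
so $H(t;\gamma)\le \beta\,\widetilde{H}(\beta;\gamma)\,e^{\beta t}/(1-e^{-\beta t})$, which yields $\limsup_{t\to\infty}t^{-1}\log H(t;\gamma)\le\beta$. Taking the infimum over admissible $\beta$ gives \eqref{E:Var}.

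Nothing here strikes me as a serious obstacle; the main bookkeeping is simply tracking the factor of $2$ that makes $\widetilde{k}(\beta)=2\Upsilon(2\beta)$ and hence the threshold $\Upsilon(2\beta)<1/(2\gamma)$, and making sure that the series and integral interchanges are justified (which they are by Tonelli since $k,h_n\ge 0$). The monotonicity of $H(\cdot;\gamma)$ is the one step that looks less routine but is settled by the induction above.
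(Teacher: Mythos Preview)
The paper does not supply a proof of this lemma; it is merely quoted from \cite{CK15SHE} and \cite{BC16}. Your argument is correct and is essentially the standard route taken in those references: compute the Laplace transform termwise to get $\widetilde H(\beta;\gamma)=\bigl[\beta(1-2\gamma\Upsilon(2\beta))\bigr]^{-1}$ on the region $\Upsilon(2\beta)<1/(2\gamma)$, and then convert finiteness of $\widetilde H(\beta;\gamma)$ into the growth bound $\limsup_{t\to\infty}t^{-1}\log H(t;\gamma)\le\beta$. The only place you deviate slightly from the cited proofs is in the last step: rather than invoking a general Tauberian-type fact, you use monotonicity of $t\mapsto H(t;\gamma)$ together with the elementary inequality $\widetilde H(\beta;\gamma)\ge\int_t^{2t}e^{-\beta s}H(s;\gamma)\,\ud s\ge H(t;\gamma)(e^{-\beta t}-e^{-2\beta t})/\beta$, which is a clean self-contained substitute. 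The monotonicity induction is straightforward (and in fact $h_n'(t)=h_{n-1}(t)k(0^+)+\int_0^t h_{n-1}'(t-u)k(u)\,\ud u\ge 0$ once you know $h_{n-1}$ is nondecreasing, or one can argue as you did without differentiating). Nothing is missing.
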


%\subsection{Two approximation results (Proof of Theorem \ref{T:Approx})}
%\label{S:Approx}

\section{Approximation procedure and the proof of Theorems \ref{T:MomComp} and \ref{T2:MomComp}}
\label{S:Aprox}

The following approximation procedure is interesting by itself, based on which our comparison results are direct consequences (see Step 4). Basically, we show that stochastic heat equations on $\R^d$ with rough initial condition and driven by Gaussian noise which is white in time and correlated in space can be approximated by systems of interacting diffusions on the $d$-dimensional lattice. There are several steps in order to achieve this goal. 

\subsection{Step 1 (Regularization of the initial data and noise)}
We will need the following approximation results, which were proved in Theorem 1.9 of \cite{CH18Comparison} for $L^2(\Omega)$ case. The generalization to the $L^p(\Omega)$, $p\ge 2$, is straightforward thanks to the moment formula \eqref{E:Mom}.

\begin{lemma}
\label{L:Approx}
{Assume that $f$ satisfies Dalang's condition \eqref{E:Dalang}.}

{\noindent (1)} Suppose that the initial measure $\mu$ satisfies \eqref{E:J0finite}.
If $u$ and $u_{\epsilon}$ are the solutions to \eqref{E:SHE} starting from
$\mu$ and
$((\mu\:\psi_\epsilon)*G(\epsilon,\cdot))(x)$, respectively, where
\begin{align}\label{E:psi}
 \psi_\epsilon(x) = \one_{\{|x|\le 1/\epsilon\}} + \left(1+1/\epsilon - |x| \right)\one_{\{1/\epsilon<|x|\le 1+1/\epsilon\}},
\end{align}
then
\[
\lim_{\epsilon\rightarrow 0_+}\Norm{u(t,x)-u_{\epsilon}(t,x)}_p =0,\quad \text{for all $p\ge 2$, $t>0$ and $x\in\R^d$.}
\]
{\noindent (2)} 
Let $\phi$ be any continuous, nonnegative and nonnegative definite function on $\R^d$ 
with compact support such that $\int_{\R^d}\phi(x)\ud x=1$. 
Let $u$ be the solution to \eqref{E:SHE} starting from bounded initial data,
i.e., $\mu(\ud x)=g(x)\ud x$ with $g\in L^\infty(\R^d)$.
If $\tilde{u}_\epsilon$ is the solution to the following mollified equation 
\begin{equation}\label{E:SHE regularize}
\frac{\partial}{\partial t} \tilde{u}_\epsilon(t,x) =\frac{1}{2}\Delta \tilde{u}_\epsilon (t,x)+\rho(\tilde{u}_\epsilon (t,x))\dot{M}^{\epsilon}(t,x)\,,
\end{equation}
with the same initial condition $\tilde{u}_\epsilon(0,\cdot)=\mu$ as $u$, 
where 
\begin{equation}
M^{\epsilon}(\ud s, \ud x) = \int_{\RR^d} \phi_{\epsilon}(x-y) M(\ud s,\ud y)\ud x \,, 
\end{equation}
and $\phi_\epsilon(x)=\epsilon^{-d}\phi(x/\epsilon)$,
then the spatial correlation function $f^{\epsilon,\epsilon}$ for $M^\epsilon$  is given by $f^{\epsilon, \epsilon}=\phi_\epsilon * \phi_\epsilon * f$ which satisfies the strengthened Dalang's condition \eqref{E:DalangAlpha} with $\alpha=1$ and
\begin{align}
 \lim_{\epsilon\rightarrow 0_+}\sup_{x\in\R^d}\Norm{u(t,x)-\tilde{u}_{\epsilon}(t,x)}_p =0,\quad 
 \text{for all $p\ge 2$ and $t>0$.}
\end{align}
Moreover, one can always find such $\phi$ so that 
\begin{align}\label{E:f'(0)=0}
&f^{\epsilon,\epsilon}(\cdot)\in C^2(\R^d;\R_+) \quad \text{with} \quad \frac{\partial }{\partial x_i}
f^{\epsilon,\epsilon}(0) =0 \quad \text{and}\quad 
\sup_{x\in\R^d} \left|\frac{\partial^2 }{\partial x_i x_j}
f^{\epsilon,\epsilon}(x) \right|<\infty
\end{align}
for all $i,j=1,\cdots, d$.
\end{lemma}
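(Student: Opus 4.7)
The plan is to treat both parts via the mild formulation \eqref{E:mild}, reducing each convergence claim to a fixed-point iteration driven by the moment bound \eqref{E:Mom}. The $L^2$ statement of part~(1) is already established in Theorem~1.9 of \cite{CH18Comparison}; what has to be verified is that the generalization to $L^p(\Omega)$ is essentially cosmetic, amounting to replacing $H(t;\gamma_2)$ by $H(t;\gamma_p)$. Concretely, I would write
\[
u(t,x)-u_\epsilon(t,x) = \left[J_0(t,x)-J_0^\epsilon(t,x)\right] + \int_0^t\!\!\int_{\R^d} G(t-s,x-y)\left[\rho(u(s,y))-\rho(u_\epsilon(s,y))\right]M(\ud s,\ud y),
\]
where $J_0^\epsilon(t,x)=\int G(t,x-y)[(\mu\psi_\epsilon)*G(\epsilon,\cdot)](y)\,\ud y$. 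The deterministic difference vanishes pointwise by dominated convergence, controlled through \eqref{E:J0finite} and the Gaussian tails of $G$. Taking the $L^p(\Omega)$-norm of the stochastic part, applying the Burkholder inequality in Dalang--Walsh form, and using the Lipschitz continuity of $\rho$ produces an integral inequality of the same shape as the one used at $p=2$ in \cite{CH18Comparison}, with the kernel $k$ from \eqref{E:k}; the a~priori finiteness coming from \eqref{E:Mom} lets the Picard iteration close as in the $p=2$ case, delivering the $L^p$ conclusion.

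For part~(2), a direct covariance computation yields $f^{\epsilon,\epsilon}=\phi_\epsilon*\phi_\epsilon*f$ (using the symmetry of $\phi$), whose spectral measure is $|\hat\phi(\epsilon\xi)|^2\hat f(\ud\xi)$. By taking $\phi$ smooth enough for $\hat\phi$ to decay at least as $|\xi|^{-2}$, this is a finite measure, so \eqref{E:DalangAlpha} holds with $\alpha=1$. Convergence $\tilde u_\epsilon\to u$ in $L^p(\Omega)$ then follows by the mild-formulation method, splitting the difference of stochastic integrals into a part with fixed integrand $\rho(u)$ driven by $M-M^\epsilon$ and a part with integrand difference $\rho(u)-\rho(\tilde u_\epsilon)$ driven by $M^\epsilon$. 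The second term is absorbed by a Gronwall step using the strengthened condition already available for $M^\epsilon$; the first term vanishes uniformly in $x$ because the Fourier density of the spatial covariance of $M-M^\epsilon$ is $(1-\hat\phi(\epsilon\xi))^2\hat f(\ud\xi)$, which tends to zero in the $(1+|\xi|^2)^{-1}$-weighted norm by dominated convergence. Finally, the regularity property \eqref{E:f'(0)=0} is arranged by taking $\phi$ smooth, radial, and compactly supported: then $\phi*\phi\in C_c^\infty$ is radial and even, so $\nabla(\phi*\phi)(0)=0$ and its second derivatives are bounded, and further convolution with the tempered measure $f$ preserves all three properties.

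The main obstacle I anticipate is the uniformity in $x$ of the convergence in part~(2). Because $\dot M$ is only spatially homogeneous (not spatially integrable), one cannot control $\sup_{x\in\R^d}\Norm{u(t,x)-\tilde u_\epsilon(t,x)}_p$ by a naive pointwise estimate combined with a finite norm; one must instead exploit the translation invariance of the driving noise inside the Gronwall step, leveraging the boundedness of the initial data to keep the convolution kernels under uniform control. The $p$-upgrade in part~(1), by contrast, is essentially bookkeeping once the $p=2$ argument is in hand.
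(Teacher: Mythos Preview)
Your approach is correct but diverges from the paper's in two notable ways.

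\textbf{For the $L^p$ convergence in parts (1) and (2).} You propose to rerun the full Gronwall/Picard argument directly in $L^p$. The paper instead takes a shortcut: it cites the $L^2$ convergence already proved in \cite{CH18Comparison} (Theorem~1.9 there, not~1.7 as the proof states) and then checks only that $\sup_{\epsilon\in(0,1)}\Norm{u_\epsilon(t,x)}_p<\infty$ (resp.\ $\sup_{\epsilon}\sup_x\Norm{\tilde u_\epsilon(t,x)}_p<\infty$), which follows immediately from the moment bound \eqref{E:Mom} once one observes $(|\mu_\epsilon|*G(t,\cdot))(x)\le C_t(|\mu|*G(2t,\cdot))(x)$ for part~(1), and $k_\epsilon(t)\le k(t)$ (since $|\hat\phi_\epsilon|\le 1$) for part~(2). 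The $L^p$ convergence then follows by interpolation: $L^2$ convergence together with uniform $L^q$ bounds for all $q>p$ forces $L^p$ convergence. This is what the paper means by ``the generalization to $L^p$ is straightforward''; your direct route works but is longer.

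\textbf{For the regularity \eqref{E:f'(0)=0}.} Your suggestion of a smooth radial $\phi$ is in fact cleaner than what the paper does: the paper takes the explicit tent-type $\phi(x)=4^{-d}(\one_{[-1,1]^d}*\one_{[-1,1]^d})(x)$, computes $\phi*\phi$ as a piecewise cubic, and verifies $C^2$ regularity and the vanishing/boundedness of derivatives by hand. With your smooth $\phi$ one gets $\phi*\phi\in C_c^\infty$ automatically, and evenness of $f^{\epsilon,\epsilon}$ (as a convolution of even objects) gives $\nabla f^{\epsilon,\epsilon}(0)=0$ immediately. The one step you pass over too quickly is $\sup_x|\partial_{ij}^2 f^{\epsilon,\epsilon}(x)|<\infty$: since $f$ may be an unbounded measure (e.g.\ $\delta_0$ or a Riesz kernel), boundedness of $\partial_{ij}^2(\phi_\epsilon*\phi_\epsilon)$ alone is not enough. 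One needs to dominate $|\partial_{ij}^2(\phi_\epsilon*\phi_\epsilon)|$ by a nonnegative, nonnegative-definite compactly supported function $\chi$, so that $(\chi*f)$ is itself nonnegative definite and hence attains its maximum at the origin---this is exactly the trick the paper uses in its final displayed inequality. With that addition your argument goes through.
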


\begin{proof} 
{\bf\noindent (1)~}
Theorem 1.7 of \cite{CH18Comparison} shows that 
\begin{align}\label{E_:L2Conv}
u_\epsilon (t,x) \rightarrow u(t,x)\quad \text{in $L^2(\Omega)$, for all $t>0$ and $x\in\R^d$.}
\end{align}
If one can show that for any $p> 2$
\begin{align}\label{E_:UnifLp}
\sup_{\epsilon\in (0,1)} \Norm{u_\epsilon(t,x)}_p<\infty,\quad\text{for all $t>0$ and $x\in\R^d$,}
\end{align}
then the $L^2(\Omega)$ convergence in \eqref{E_:L2Conv} also holds for all $p>2$.

Let $\mu_\epsilon:=\left((\mu \: \psi_\epsilon)*G(\epsilon,\cdot)\right)(x)$. Since, for some constant $C_t>0$, $G(t+\epsilon,x)\le C_t G(2t,x)$ for all $x\in\R^d$ and $\epsilon\in (0,1\wedge t)$, we have 
\begin{align*}
\left(|\mu_\epsilon|*G(t,\cdot)\right)(x)
\le \left(|\mu| *G(t+\epsilon,\cdot)\right)(x) \leq C_t  \left(|\mu| *G(2t,\cdot)\right)(x)<\infty.
\end{align*}
Hence, \eqref{E:Mom} in Theorem \ref{T:Mom} shows \eqref{E_:UnifLp}, which also  proves part (1) of Theorem \ref{L:Approx}.

{\bigskip\bf\noindent (2)~}
From the direct computation,  the spatial covariance function for $M^\epsilon$ is given by $f^{\epsilon,\epsilon}(x):= (\phi_\epsilon*\phi_\epsilon*f)(x)$ and  $f^{\epsilon,\epsilon}$ is nonnegative and nonnegative definite and it satisfies strengthened Dalang's condition \eqref{E:DalangAlpha} for each $\epsilon>0$ (see Step 3 of Section 7 of \cite{CH18Comparison}).  In addition, the $L^2(\Omega)$ convergence has been established in Theorem 1.7 of \cite{CH18Comparison}. As in the proof of part (1), we need only show that for all $t>0$ and $p\ge 2$,
\begin{align}
\label{E_:UnifLp2}
\sup_{\epsilon\in (0,1)} \sup_{x\in\R^d}\Norm{\tilde{u}_\epsilon(t,x)}_p<\infty.
\end{align}
By Theorem \ref{T:Mom}, 
\[
\Norm{\tilde{u}_\epsilon (t,x)}_p\le C  H_\epsilon (t,\gamma_p)^{1/2},
\]
where we have used the fact that $g\in L^\infty(\R^d)$ and $\gamma_p=32 p\Lip_\rho^2$, and $H_\epsilon(t;\gamma_p)$ is defined in \eqref{E:H} with the function $k(\cdot)$ replaced by $k_\epsilon(\cdot)$.
Because 
\begin{align*}
% \label{E:hatphie}
|\hat{\phi}_\epsilon(\xi)|^2=
|\hat{\phi}(\epsilon\xi)|^2
=\left|\int_{\R^d}e^{-i\epsilon\InPrd{\xi,x}}\phi(x)\ud x\right|^2
\le \left(\int_{\R^d}\phi(x)\ud x\right)^2 =1,
\end{align*}
which implies that 
\begin{align*}
k_\epsilon(t)&=\int_{\R^d}f^{\epsilon,\epsilon}(z) G(t,z)\ud z
=(2\pi)^{-d}\int_{\R^d}\hat{f}(\ud \xi) \hat{\phi}_\epsilon(\xi)^2 \exp\left(-\frac{t|\xi|^2}{2}\right)\\
&\le
(2\pi)^{-d}\int_{\R^d}\hat{f}(\ud \xi) \exp\left(-\frac{t|\xi|^2}{2}\right)= k(t),
\end{align*}
for all $\epsilon>0$, where $K(t)$ is defined in \eqref{E:k},
we see that 
\[
\Norm{\tilde{u}_\epsilon (t,x)}_p\le C  H (t,\gamma_p)^{1/2},
\]
where the upper bound is uniform in both $\epsilon$ and $x$. 
 
\bigskip
It remains to prove \eqref{E:f'(0)=0}. 
Let $g(x)=1_{[-1,1]^d}(x)$ for $x\in\R^d$ and choose
\[
\phi(x) =4^{-d}(g*g)(x) = 4^{-d} \prod_{i=1}^d (2-|x_i| ) \one_{\{|x_i|\le 2\}}.
\]
It is easy to see that  $\phi$ is a continuous, nonnegative and nonnegative definite function on $\R^d$ 
with compact support such that $\int_{\R^d}\phi(x)\ud x=1$. It is also clear that $f^{\epsilon,\epsilon}(\cdot)\in C^2(\R^d;\R_+)$. 
Fix $i\in\{1,\cdots,d\}$. 
To show that $\frac{\partial }{\partial x_i}f^{\epsilon,\epsilon}(0)=0$, it suffices to show this for $\epsilon =1$. 
Direct computation shows that 
\[
\frac{\partial^n}{\partial x_i^n}f^{1,1}(0)=
\int_{\R^d} \frac{\partial^n}{\partial x_i^n}\phi_2(y) \:  f(\ud y)
\]
with  $\phi_2(x) = (\phi*\phi)(x)  = 4^{-d} \prod_{i=1}^d \theta(x_i)$ and 
\begin{gather*}
\theta(x_i) = 
 \left(\frac{1}{2}(|x_i|-4)x_i^2+\frac{16}{3}\right)
\one_{\{|x_i|\le 2\}}+
\frac{1}{6}
(4-|x_i|)^3 \one_{
\{2\le |x_i|\le 4\}}.
\end{gather*}
It is clear that $\theta(\cdot)$ is an even and $C^2$ function on $\R$ with $\theta'(\cdot)$ being a continuous odd function and $\theta''(\cdot)$ a continuous even function. More precisely,  
\begin{align*}
\theta'(x_i) &= 
\frac12 x_i (3 |x_i|-8)\one_{\{|x_i|\le 2\}}
-\frac12 \sign(x_i) (4-|x_i|)^2\one_{\{2\le |x_i|\le 4\}},\\
\theta''(x_i) &= 
 (3 |x_i|-4)\one_{\{|x_i|\le 2\}}
- (4-|x_i|)\one_{\{2\le |x_i|\le 4\}}.
\end{align*}
Because $f$ is nonnegative definite, we see that 
\[
\frac{\partial}{\partial x_i}f^{1,1}(0) = \int_{\R^d}  \frac{\partial}{\partial x_i} \phi_2(y) f(\ud y)=
4^{-d} \int_{\R^d} \left(\prod_{j\ne i} \theta(y_j) \right)\theta'(y_i)f(\ud y)=0.
\]
One can also find some constant $C>0$ large enough such that for all $i,j\in\{1,\cdots,d\}$, 
\[
\left|\frac{\partial^2}{\partial x_i x_j}\phi_2(x)\right| \le C \prod_{i=1}^d \left(4-|x_i|\right) \one_{\{|x_i|\le 4\}}
= C \left(\one_{\{|\cdot|\le 2\}} *\one_{\{|\cdot|\le 2\}}\right)(x),
\]
where the right-hand side is a continuous, nonnegative, and nonnegative definite function.
Hence, for any $i,j\in\{1,\cdots,d\}$, 
\begin{align*}
\left|\frac{\partial^2}{\partial x_i x_j}f^{1,1}(x)\right| \le 
& \int_{\R^d}  
\left|\frac{\partial^2}{\partial x_ix_j}\phi_2(x-y)\right|f(\ud y)\\
\le  & 
C \int_{\R^d} \prod_{i=1}^d (4-|x_i-y_i|)\one_{\{|x_i-y_i|\le 4\}}f(\ud y)\\
\le& 
C \int_{\R^d} \prod_{i=1}^d (4-|y_i|)\one_{\{|y_i|\le 4\}}f(\ud y)< \infty,
\end{align*}
where the third inequality is due to the fact that the integrand is a nonnegative definite function and in the last inequality we use the fact that the integrand is a continuous function. 
This completes the proof of Lemma \ref{L:Approx}. 
\end{proof}

%The above theorem is a slight generalization of the $L^2(\Omega)$ convergence of Theorem 1.7 in \cite{CH18Comparison} to the $L^p(\Omega)$ convergence. A sketch of the proof of this theorem is given in Section \ref{S:Approx}.

We also point out that the initial data $((\mu\:\psi_\epsilon)*G(\epsilon,\cdot))(x)$ in the above theorem has Gaussian tails so that \textcolor{magenta}{it} is in $L^p(\R^d)$ for any $p\in [1,\infty]$. This will be used in Step 4 of Section \ref{SS:Step4}. 

\begin{lemma}\label{L:initial} 
 Suppose that $\mu$ is a (possibly signed) Borel measure that satisfies \eqref{E:J0finite}.
 For any $\delta>\epsilon>0$, 
 there exists some constant $C=C(\epsilon,\delta,\mu)>0$ such that 
 \[
 \left|((\mu\:\psi_\epsilon)*G(\epsilon,\cdot))(x)\right|
 \le C\: G(\delta, x), 
 \]
 for all $x\in\R^d$, where $\psi_\epsilon(\cdot)$ is given by \eqref{E:psi}. 
\end{lemma}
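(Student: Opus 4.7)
The strategy is a direct pointwise Gaussian comparison, exploiting that $\psi_\epsilon$ is compactly supported in $\{|y|\le 1+1/\epsilon\}$. Writing
\[
\bigl|\bigl((\mu\,\psi_\epsilon)*G(\epsilon,\cdot)\bigr)(x)\bigr|
\le \int_{\R^d} G(\epsilon,x-y)\,\psi_\epsilon(y)\,|\mu|(\ud y),
\]
my plan is to bound $G(\epsilon,x-y)$ uniformly in $y$ (on the support of $\psi_\epsilon$) by a constant multiple of $G(\delta,x)$ and then absorb the $y$-dependent prefactor into a finite integral against $|\mu|$.

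The key algebraic step is to compare the exponents. For fixed $y$ the quadratic form
\[
Q(x):=\frac{|x-y|^2}{2\epsilon}-\frac{|x|^2}{2\delta}
=\frac{(\delta-\epsilon)|x|^2-2\delta\langle x,y\rangle+\delta|y|^2}{2\epsilon\delta}
\]
is convex in $x$ since $\delta>\epsilon$, and minimizing at $x=\tfrac{\delta}{\delta-\epsilon}y$ gives the identity
\[
Q(x)\ge -\frac{|y|^2}{2(\delta-\epsilon)}
\qquad\text{for all }x\in\R^d.
\]
Multiplying by the Gaussian normalizations, this rearranges to
\[
G(\epsilon,x-y)\le \left(\frac{\delta}{\epsilon}\right)^{d/2}\exp\!\left(\frac{|y|^2}{2(\delta-\epsilon)}\right) G(\delta,x).
\]

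Plugging this into the convolution bound and using $0\le\psi_\epsilon\le \one_{\{|y|\le 1+1/\epsilon\}}$, I obtain
\[
\bigl|\bigl((\mu\,\psi_\epsilon)*G(\epsilon,\cdot)\bigr)(x)\bigr|
\le \left(\frac{\delta}{\epsilon}\right)^{d/2} G(\delta,x)\int_{\{|y|\le 1+1/\epsilon\}} \exp\!\left(\frac{|y|^2}{2(\delta-\epsilon)}\right)|\mu|(\ud y).
\]
The remaining integral is finite because $|\mu|$ is finite on compact sets: indeed, \eqref{E:J0finite} applied with any single $a>0$ forces $|\mu|(\{|y|\le R\})\le e^{aR^2}\int e^{-a|y|^2}|\mu|(\ud y)<\infty$. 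Taking $C$ to be the resulting constant (depending on $\epsilon,\delta,\mu$) completes the proof.

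There is essentially no obstacle here; the only point that requires minor care is verifying that \eqref{E:J0finite} indeed ensures local finiteness of $|\mu|$, which is immediate from the bound above. Everything else is the one-line quadratic completion together with the compact support of $\psi_\epsilon$.
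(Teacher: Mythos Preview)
Your argument is correct and uses the same Gaussian completing-the-square identity as the paper. The only difference in execution is that you drop the nonpositive term in the exponent to get the pointwise bound $G(\epsilon,x-y)\le (\delta/\epsilon)^{d/2}e^{|y|^2/(2(\delta-\epsilon))}G(\delta,x)$ directly, whereas the paper instead integrates the ratio $\Psi(x)=|((\mu\psi_\epsilon)*G(\epsilon,\cdot))(x)|/G(\delta,x)$ over $x$ and argues that smoothness plus integrability forces boundedness. Your route is shorter and avoids that last implication (which, as stated in the paper, would need a word of justification since smooth integrable functions are not automatically bounded); both approaches rely on the compact support of $\psi_\epsilon$ to make the $|\mu|$-integral finite.
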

\begin{proof}
Fix $\delta>\epsilon>0$ and denote
\[
\Psi(x):=\frac{|((\mu\:\psi_\epsilon)*G(\epsilon,\cdot))(x)|}{G(\delta,x)}.
\]
It is clear that $\Psi$ is a nonnegative and smooth function. Notice that 
\[
\frac{G(\epsilon,x-y)}{G(\delta,x)}
=(\delta/\epsilon)^{d/2}
\exp\left(-\frac{(\delta-\epsilon)\left|x-\frac{\delta}{\delta-\epsilon}y\right|^2}{2\epsilon\delta} + \frac{|y|^2}{2(\delta-\epsilon)}\right),
\]
which implies that 
\begin{align*}
 \int_{\R^d}\Psi(x)\ud x 
 &\le 
(\delta/\epsilon)^{d/2} 
\int_{|y|\le 1+1/\epsilon} |\mu|(\ud y)
\exp\left(\frac{|y|^2}{2(\delta-\epsilon)}\right)
\int_{\R^d}\ud x 
\exp\left(-\frac{(\delta-\epsilon)\left|x-\frac{\delta}{\delta-\epsilon}y\right|^2}{2\epsilon\delta}\right)\\
&\le 
(\delta/\epsilon)^{d/2} 
\exp\left(\frac{(1+1/\epsilon)^2}{2(\delta-\epsilon)}\right)
\left(\int_{|y|\le 1+1/\epsilon} |\mu|(\ud y)\right)
\int_{\R^d}\ud x 
\exp\left(-\frac{(\delta-\epsilon)\left|x\right|^2}{2\epsilon\delta}\right)\\
&= C_{\epsilon,\delta,\mu}.
\end{align*}
Hence, $\Psi(x)$ is also an integrable function on $\R^d$. These facts together imply that $\Psi(x)$ has to be bounded. This proves the lemma. 
\end{proof}

\subsection{Step 2 (Mollification of the Laplacian operator)}

In this section, we regularize the Laplacian operator by using Yosida's approximation. 
Thanks to Step 1, we may assume that the initial data $\mu(\ud x)=u_0(x)\ud x$ with $u_0\in \mathcal{S}(\R^d)$, i.e., $u_0$ is a Schwartz test function, and $f\in C^2 (\R^d;\R_+)$ with
\begin{align}\label{E:fee}
\frac{\partial }{\partial x_i}
f^{\epsilon,\epsilon}(0) =0 \quad \text{and}\quad 
\sup_{x\in\R^d} \left|\frac{\partial^2 }{\partial x_i x_j}
f^{\epsilon,\epsilon}(x) \right|<\infty,\quad\text{for all $i,j=1,\cdots,d$.}
\end{align} 
First, let us view the $G(t,x)$ as an operator, denoted by ${\bf G}(t)$, as follows:
\begin{equation}
{\bf G}(t) f(x) := (G(t,\cdot)* f)(x)\,.
\end{equation}
Let $\bf I $ be the identity operator: ${\bf I} f(x):=(\delta * f)(x) = f(x)$. For any $\epsilon\in(0,1)$, set
\begin{equation}
\Delta ^{\epsilon} = \frac{{\bf G}(\epsilon) - {\bf I}}{\epsilon}\,.
\end{equation}
Let
\begin{equation}
G^{\epsilon}(t) = \exp (t\Delta^{\epsilon})= e^{-\frac{t}{\epsilon}} \sum_{n=0}^{\infty} \frac{(t/\epsilon)^n}{n!} {\bf G}(n\epsilon) := e^{-t/\epsilon} {\bf I} + {\bf R}^{\epsilon}(t)\,,
\end{equation}
where the operator ${\bf R}^{\epsilon}(t)$ has a density, denoted by $R^{\epsilon}(t,x)$, which is equal to
\begin{equation}\label{E:R epsilon}
R^{\epsilon}(t,x)=e^{-t/\epsilon} \sum_{n=1}^{\infty} \frac{(t/\epsilon)^n}{n!} G(n\epsilon,x)\,.
\end{equation}
Because $f\in C^2(\R^d;\R_+)$, the stochastic integral with respect to $M(\ud s,\ud y)$ is equivalent to the stochastic integral with respect to $M_y(\ud s)\ud y$, where $\{M_x(t),t\ge 0, x\in\R^d \}$ are Brownian motions starting from zero indexed by $x\in\R^d$ with the following correlation structure
\begin{align}
\label{E:d<M>t} 
\E[M_x(t)M_y(t)] = f(x-y) \: t .
\end{align}
Denote $\dot{M}_x(t) = \frac{\ud}{\ud t} M_x(t)$.
Consider the following stochastic differential equation
\begin{equation}\label{E:apprx}
 \left\{
\begin{array}{ll}
\displaystyle     \frac{\partial }{\partial t} u_{\epsilon}(t,x) = 
\Delta^{\epsilon} u_{\epsilon}(t,x) + \rho(u_{\epsilon}(t,x)) 
\dot{M}_{x}(t)\,, & t >0\,, x \in \RR^d\,,
\\[1em]
\displaystyle      u_{\epsilon}(0,x) = u_0(x)\,, & x \in \RR^d \,. \\
\end{array}
\right.
\end{equation}
Since $\rho$ is Lipschitz continuous and $\Delta^{\epsilon}$ is a bounded operator,
\eqref{E:apprx} has a unique strong solution
\begin{equation}\label{E:SSol1}
u_{\epsilon}(t,x)=
u_0(x) + \int_0^t \ud s  \Delta^{\epsilon} 
u_{\epsilon}(s,x)
+ \int_0^t \rho(u_{\epsilon}(s,x))  M_x(\ud s)\,,
\end{equation}
where
\begin{align}\label{E_:SSol1}
\int_0^t \ud s  \Delta^{\epsilon} u_{\epsilon}(s,x) = 
\frac{1}{\epsilon}
\int_0^t\int_{\R^d} G(\epsilon,x-y)\left[u_\epsilon(s,y)- u_\epsilon(s,x)\right]\ud y \ud s.
\end{align}

We will need the following lemma regarding the spatial regularity of $u_\epsilon(t,x)$.

\begin{lemma}\label{L:HolderXeps}
Let $u_\epsilon$ be a solution to \eqref{E:apprx}.
If the initial data $u_0\in \mathcal{S}(\R^d)$, and if the correlation function $f$ in \eqref{E:d<M>t} is in $\in C^2(\R^d;\R_+)$ with $f'(0)\equiv 0$ and $f''(\cdot)$ being bounded, then for any $\epsilon>0$, $T>0$, and $p\ge 2$, there is a constant $C=C(T,p,\epsilon, \mu,\Lip_\rho)>0$ such that 
\[
\Norm{u_\epsilon(t,x)-u_\epsilon(t,y)}_p
\le C |x-y|,\qquad\text{
for all $t\in[0,T]$ and $x,y\in\R^d$.}
\]
for all $t\in[0,T]$ and $x,y\in\R^d$ with $|x-y|\le K$.
\end{lemma}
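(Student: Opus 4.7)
\medskip

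\textbf{Proof plan for Lemma \ref{L:HolderXeps}.} My plan is to derive a closed functional inequality for
\[
N_p(t,h):=\sup_{x\in\R^d}\Norm{u_\epsilon(t,x+h)-u_\epsilon(t,x)}_p,
\]
show it is a priori finite via the $L^p$ moment bound on $u_\epsilon$, and then close with Gronwall's inequality so that $N_p(t,h)\le C|h|$ uniformly on $[0,T]$. The Schwartz assumption on $u_0$ and the $C^2$ boundedness assumption on $f$ will enter precisely at the initial-data term and at the noise-comparison term, respectively.

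The first step is to take differences in \eqref{E:SSol1} between $x+h$ and $x$ and to identify four contributions: (i) $u_0(x+h)-u_0(x)$, which is bounded in absolute value by $\|\nabla u_0\|_\infty |h|$ since $u_0\in\mathcal{S}(\R^d)$; (ii) a drift term of the form $\epsilon^{-1}\int_0^t [({\bf G}(\epsilon)u_\epsilon(s,\cdot))(x+h)-({\bf G}(\epsilon)u_\epsilon(s,\cdot))(x)]\ud s$; (iii) a drift term $-\epsilon^{-1}\int_0^t [u_\epsilon(s,x+h)-u_\epsilon(s,x)]\ud s$; and (iv) the stochastic integral difference. For (ii) I will use that ${\bf G}(\epsilon)$ is a probability convolution, so that by Minkowski its $L^p$ norm is at most $\int_0^t N_p(s,h)\ud s$; (iii) is bounded the same way. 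Both contribute a factor $\epsilon^{-1}\int_0^t N_p(s,h)\ud s$.

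The stochastic term is the one requiring the assumption $f'(0)=0$ and $f''$ bounded. I will split it as
\[
\int_0^t \bigl[\rho(u_\epsilon(s,x+h))-\rho(u_\epsilon(s,x))\bigr]M_{x+h}(\ud s)
+\int_0^t \rho(u_\epsilon(s,x))\bigl[M_{x+h}(\ud s)-M_x(\ud s)\bigr].
\]
The first piece is controlled by the Burkholder--Davis--Gundy inequality together with Minkowski's integral inequality and the Lipschitz property of $\rho$, giving a bound of the form $C_p \sqrt{f(0)}\,\Lip_\rho\,(\int_0^t N_p(s,h)^2\ud s)^{1/2}$. For the second, the quadratic variation of $M_{x+h}-M_x$ equals $2[f(0)-f(h)]t$, and by a second-order Taylor expansion with $f'(0)=0$ and $\|f''\|_\infty<\infty$ one has $f(0)-f(h)\le \tfrac12\|f''\|_\infty |h|^2$. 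Combined with the uniform $L^p$ moment bound $\sup_{x,s\le T}\Norm{u_\epsilon(s,x)}_p<\infty$ (which follows from Theorem \ref{T:Mom} applied to the bounded, smooth initial datum $u_0$ and the moment formula, using that $\Delta^\epsilon$ is a bounded perturbation one handles by standard Picard iteration), this piece is $\le C(T,p,\epsilon)\, |h|$.

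Squaring the resulting inequality, taking supremum in $x$, and applying Cauchy--Schwarz to $\epsilon^{-1}\int_0^t N_p(s,h)\ud s$ yields
\[
N_p(t,h)^2 \le C_1(T,\epsilon,p)\,|h|^2 + C_2(T,\epsilon,p)\int_0^t N_p(s,h)^2\,\ud s,
\]
and Gronwall's inequality then gives $N_p(t,h)\le C|h|$. To legitimize Gronwall I need $N_p(t,h)<\infty$ a priori, which follows from $N_p(t,h)\le 2\sup_{x}\Norm{u_\epsilon(t,x)}_p$ and the uniform moment bound mentioned above. The main technical obstacle is bookkeeping around the $\epsilon^{-1}$ factors from the Yosida operator: they blow up as $\epsilon\to 0$, but this is acceptable because the lemma's constant $C$ is allowed to depend on $\epsilon$; indeed obtaining an $\epsilon$-uniform modulus would require the full Dalang theory and is not needed here. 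The condition $|x-y|\le K$ stated in the lemma appears only because on a bounded range the linear bound is sharp; my estimate in fact yields the inequality for all $x,y\in\R^d$.
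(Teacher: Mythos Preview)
Your argument is correct and follows essentially the same route as the paper: decompose the increment of the strong solution \eqref{E:SSol1} into initial data, Yosida drift, and stochastic integral, split the stochastic part exactly as you do, use $f(0)-f(h)\le \tfrac12\|f''\|_\infty|h|^2$ together with the uniform moment bound \eqref{E:ATp}, and close with Gronwall on the squared $L^p$-increment.

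The one noteworthy difference is your treatment of the convolution drift term. The paper subtracts and adds $u_\epsilon(s,x)$ inside the convolution to write it as $\frac{1}{\epsilon}\int_0^t\int_{\R^d}[G(\epsilon,x-z)-G(\epsilon,y-z)][u_\epsilon(s,z)-u_\epsilon(s,x)]\,\ud z\,\ud s$, and then invokes a heat-kernel increment estimate (Lemma~3.1 of \cite{CH18Comparison}) to bound $|G(\epsilon,x-z)-G(\epsilon,y-z)|$ by $C\epsilon^{-1/2}(G(2\epsilon,x-z)+G(2\epsilon,y-z))|x-y|$, obtaining a direct contribution $C|x-y|$ outside the Gronwall integral. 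Your approach instead exploits translation invariance of $G(\epsilon,\cdot)*\cdot$ and Minkowski to feed this term back into the Gronwall integrand as $\epsilon^{-1}\int_0^t N_p(s,h)\,\ud s$. Your route is slightly more elementary in that it avoids the external heat-kernel lemma, at the cost of requiring you to work with the supremum $N_p(t,h)$ rather than the pointwise increment (the paper's Gronwall runs for fixed $x,y$). A minor point: the a priori bound $\sup_{s\le T,\,x}\Norm{u_\epsilon(s,x)}_p<\infty$ does not literally follow from Theorem~\ref{T:Mom}, which is stated for the true Laplacian; it is, as you correctly indicate parenthetically, obtained directly by Picard iteration for \eqref{E:apprx} using that $\Delta^\epsilon$ is bounded.
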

\begin{proof}
Fix $p\ge 2$, $T>0$, and $\epsilon>0$. 
Let $C$ be a generic constant that may depend on these constants, namely, $T$, $p$, $\epsilon$, and $\Lip_\rho$. 
For any $t\in [0,T]$ and $x,y\in\R^d$, we have that 
\begin{align*}
u_\epsilon(t,x)-u_\epsilon(t,y) = & 
u_0(x) -u_0(y) \\
&+ \frac{1}{\epsilon}\int_0^t \ud s \int_{\R^d}
\ud z
\left[G(\epsilon,x-z)-G(\epsilon,y-z)\right] 
\left[u_\epsilon(s,z)-u_\epsilon(s,x)\right]\\
&+\frac{1}{\epsilon}\int_0^t\ud s 
\left[u_\epsilon(s,y)-u_\epsilon(s,x)\right] \\
&+\int_0^t\left[\rho(u_\epsilon(s,x))-\rho(u_\epsilon(s,y))\right] M_x(\ud s)\\
&+\int_0^t\rho(u_\epsilon(s,y))\left[
 M_x(\ud s)- M_y(\ud s)\right]\\
=:& \sum_{\ell=1}^5 I_\ell.
\end{align*}
It is clear that $|I_1|\le \Norm{u_0'}_{L^\infty(\R^d)} |x-y|$.
The boundedness and regularity of the initial data implies that
\begin{align}\label{E:ATp}
A_{T,p,\epsilon}:=\sup_{s\in [0,T]}\sup_{x\in\R^d}
\Norm{u_\epsilon(s,x)}_p<\infty.
\end{align}
Hence, we have that
\begin{align*}
\Norm{I_2}_p\le & 2 A_{T,p,\epsilon} \frac{1}{\epsilon}\int_0^t\ud s
\int_{\R^d}\ud z \left|G(\epsilon,x-z)-G(\epsilon,y-z)\right|.
\end{align*}
Notice that Lemma 3.1  of \cite{CH18Comparison}  with $\alpha=1$ implies that 
\[
\left|G(\epsilon,x-z)-G(\epsilon,y-z)\right| 
\le \frac{C}{\sqrt{\epsilon}}\left(G(2\epsilon,x-z)+G(2\epsilon,y-z)\right)|x-y|. 
\]
Therefore, 
\[
\Norm{I}_p\le  C|x-y|\int_0^t \ud s\: \int_{\R^d} \ud z \left(G(2\epsilon,x-z)+G(2\epsilon,y-z)\right)
= C|x-y|,
\]
where we note that the constants $C$ depend on $\epsilon$.

As for $I_3$, we see that
\[
\Norm{I_3}_p^2 \le 
\frac{1}{\epsilon^2}\left(\int_0^t \Norm{u_\epsilon(s,y)-u_\epsilon(s,x)}_p\ud s\right)^2
\le \frac{T}{\epsilon^2}
\int_0^t \Norm{u_\epsilon(s,y)-u_\epsilon(s,x)}_p^2\ud s.
\]
As for $I_4$, by \eqref{E:d<M>t} and the Burkholder-Davis-Gundy inequality, we see that 
\begin{align}\label{E:I4BDG}
 \Norm{I_4}_p^2 \le & 
 C \Lip_\rho  f(0) \int_0^t \Norm{u_\epsilon(s,y)-u_\epsilon(s,x)}_p^2\ud s.
\end{align}
As for $I_5$, by the Burkholder-Davis-Gundy inequality, \eqref{E:d<M>t}, and \eqref{E:ATp}, we see that
% \begin{align*}
% \Norm{I_5}_p^2 \le C \int_0^t\ud s \iint_{\R^{2d}} \ud z\ud z'
% &\left|G(\epsilon,x-z)-G(\epsilon,y-z)\right|\\
% \times &\left|G(\epsilon,x-z')-G(\epsilon,y-z')\right|f(z-z').
% \end{align*}
\begin{align*}
\Norm{I_5}_p^2 \le & C \Norm{\InPrd{\int_0^\cdot \rho(u_\epsilon(s,y)) \left[M_x(\ud s)-M_y(\ud s)\right]}_t}_{p/2}\\
= & C \Norm{2 \int_0^t \rho(u_\epsilon(s,y))^2  \left[f(0)-f(x-y)\right]\ud s}_{p/2}\\
\le & C \int_0^t \Norm{\rho(u_\epsilon(s,y))}_{p}  \left|f(0)-f(x-y)\right|\ud s\\
\le & C \left|f(0)-f(x-y)\right|.
\end{align*}
Because $f\in C^2(\R^d;\R_+)$ with properties \eqref{E:fee}, we see that $|f(0)-f(x-y)|\le C |x-y|^2$.  Therefore, 
\[
\Norm{I_5}_p\le C|x-y|.
\]
Combining these five terms, we see that
\[
\Norm{u_\epsilon(t,x)-u_\epsilon(t,y)}_p^2 
\le C|x-y|^{2}+ C \int_0^t \Norm{u_\epsilon(s,y)-u_\epsilon(s,x)}_p^2\ud s.
\]
Finally, an application of Gronwall's lemma proves Lemma \ref{L:HolderXeps}. 
\end{proof}

\begin{lemma}\label{L:ApproxLaplacian}
Under the same setting as in Lemma \ref{L:HolderXeps}, we have that 
\begin{equation}\label{eq: appro sol L2 conv}
\lim_{\epsilon \to 0} \sup_{x \in \RR^d} \|u_{\epsilon}(t,x)-u(t,x)\|_p = 0\,, \quad\text{for all $t>0$ and $p\ge 2$}\,,
\end{equation}
where $u(t,x)$ is the solution to the same equation \eqref{E:apprx} but with $\Delta^\epsilon$ replaced by the standard Laplacian operator $\Delta$.
\end{lemma}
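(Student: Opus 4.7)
\medskip

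\textbf{Proof proposal for Lemma \ref{L:ApproxLaplacian}.}
My plan is to rewrite both $u_\epsilon$ and $u$ in mild form using the semigroup generated by $\Delta^\epsilon$ and $\frac12\Delta$ respectively, subtract them, and close a Gronwall-type inequality in $\|\cdot\|_p$. Since $\Delta^\epsilon$ is bounded and self-adjoint-like on $L^2(\R^d)$, the strong solution \eqref{E:SSol1}--\eqref{E_:SSol1} is also the mild solution
\[
u_\epsilon(t,x) = \bigl(G^\epsilon(t) u_0\bigr)(x) + \int_0^t\!\!\int_{\R^d} G^\epsilon(t-s,x-y)\,\rho(u_\epsilon(s,y))\,M(\ud s,\ud y),
\]
where, writing $G^\epsilon(t,\cdot)=e^{-t/\epsilon}\delta_0+R^\epsilon(t,\cdot)$, the Dirac mass at zero corresponds to the pointwise noise term $e^{-(t-s)/\epsilon}\rho(u_\epsilon(s,x))\,M_x(\ud s)$. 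This equivalence uses the assumption $f\in C^2(\R^d;\R_+)$, which makes the pointwise and the Walsh stochastic integrations coincide. For $u$ the analogous mild representation \eqref{E:mild} is assumed.

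Writing $v_\epsilon(t,x):=u_\epsilon(t,x)-u(t,x)$, I would split
\[
v_\epsilon(t,x) = A_\epsilon(t,x) + B_\epsilon(t,x) + C_\epsilon(t,x),
\]
where $A_\epsilon=(G^\epsilon(t)-G(t))u_0$ is deterministic, $B_\epsilon$ is the stochastic integral of $(G^\epsilon-G)(t-s,x-y)\,\rho(u(s,y))$, and $C_\epsilon$ is the stochastic integral of $G^\epsilon(t-s,x-y)\bigl[\rho(u_\epsilon(s,y))-\rho(u(s,y))\bigr]$. The term $A_\epsilon\to 0$ uniformly in $x$ by the standard Hille--Yosida/Trotter argument (since $\Delta^\epsilon$ is the Yosida-type generator $({\bf G}(\epsilon)-{\bf I})/\epsilon$ associated with the heat semigroup and $u_0\in\mathcal S(\R^d)$). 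For $C_\epsilon$ I apply Burkholder--Davis--Gundy and the Lipschitz continuity of $\rho$:
\[
\|C_\epsilon(t,x)\|_p^2 \le C\int_0^t \!\!\int_{\R^{2d}} G^\epsilon(t-s,x-y)G^\epsilon(t-s,x-y')f(y-y')\,\|v_\epsilon(s,y)\|_p\,\|v_\epsilon(s,y')\|_p\,\ud y\,\ud y'\,\ud s,
\]
which, once the uniform moment bound $\sup_{\epsilon\in(0,1)}\sup_{s\le T,x\in\R^d}\|u_\epsilon(s,x)\|_p<\infty$ is in hand (obtained as in \eqref{E:ATp} from the Picard iteration and a monotone comparison of $k_\epsilon(t)$ with $k(t)$, exactly as in the proof of Lemma \ref{L:Approx}(2)), reduces via a Cauchy--Schwarz step and the identity $\int G^\epsilon(t-s,x-y)f(y-y')G^\epsilon(t-s,x-y')\ud y\ud y'=k_\epsilon^\ast(t-s)$ (with $k_\epsilon^\ast\le $ bounded uniformly in $\epsilon$) to a Gronwall input $\int_0^t K(t-s)\sup_y\|v_\epsilon(s,y)\|_p^2\,\ud s$.

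The decisive term is $B_\epsilon$. Here BDG gives
\[
\|B_\epsilon(t,x)\|_p^2 \le C\int_0^t\!\!\int_{\R^{2d}}(G^\epsilon-G)(t-s,x-y)(G^\epsilon-G)(t-s,x-y')f(y-y')\,\|\rho(u(s,y))\|_p\|\rho(u(s,y'))\|_p\,\ud y\,\ud y'\,\ud s,
\]
and one must show that this vanishes uniformly in $x$ as $\epsilon\to 0$. I would do this by combining three ingredients: (i) the Fourier identity $\widehat{G^\epsilon}(t,\xi)=\exp\bigl(t\epsilon^{-1}(e^{-\epsilon|\xi|^2/2}-1)\bigr)$ which satisfies $|\widehat{G^\epsilon}(t,\xi)|\le e^{-t|\xi|^2/2\cdot(1-O(\epsilon|\xi|^2))}$ together with $\widehat{G^\epsilon}(t,\xi)\to \widehat{G}(t,\xi)=e^{-t|\xi|^2/2}$ pointwise, (ii) the uniform moment bound on $u$, and (iii) Dalang's condition \eqref{E:Dalang}, which furnishes the dominating integral $\Upsilon(\beta)<\infty$ to apply dominated convergence in the Fourier side (since the double kernel integral becomes $(2\pi)^{-d}\int_0^t\!\!\int_{\R^d}|\widehat{G^\epsilon}(s,\xi)-\widehat{G}(s,\xi)|^2\hat f(\ud\xi)\,\ud s$ times a uniform moment factor). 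Once $\sup_x\|B_\epsilon(t,x)\|_p\to 0$, combining the bounds on $A_\epsilon,B_\epsilon,C_\epsilon$ and applying a Gronwall-type lemma adapted to the kernel $K$ yields \eqref{eq: appro sol L2 conv}.

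The main obstacle I anticipate is the uniform-in-$x$ control of $B_\epsilon$: the quadratic Fourier-side bound is clean, but one needs a uniform (in $\epsilon$ small) majorization of $|\widehat{G^\epsilon}(s,\xi)-\widehat{G}(s,\xi)|^2/(1+|\xi|^2)$ by an $\hat f$-integrable function, which will require splitting $\xi$ into low/high frequency regimes and using the spectral representation of $\widehat{G^\epsilon}$ to obtain the inequality $|\widehat{G^\epsilon}(s,\xi)|\le \widehat{G}(cs,\xi)$ for some $c=c(\epsilon)>0$ bounded away from $0$.
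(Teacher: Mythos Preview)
Your argument is essentially correct and self-contained, whereas the paper's own proof is a two-line affair: it cites the $L^2(\Omega)$ convergence already established in \cite[Step 2 of Section 7]{CH18Comparison} and then upgrades to $L^p(\Omega)$ by the uniform moment bound
\[
\sup_{\epsilon\in(0,1)}\sup_{s\in[0,T]}\sup_{x\in\R^d}\Norm{u_\epsilon(s,x)}_p<\infty,
\]
exactly as in part (1) of Lemma~\ref{L:Approx}. So the paper offloads all the work to an external reference, while you reconstruct that work directly via the mild formulation, the $A_\epsilon+B_\epsilon+C_\epsilon$ decomposition, BDG, and a Gronwall argument. Your route is longer but has the advantage of being self-contained and of making explicit the Fourier mechanism (the symbol $\widehat{G^\epsilon}(t,\xi)=\exp\bigl(t\epsilon^{-1}(e^{-\epsilon|\xi|^2/2}-1)\bigr)$) that drives the approximation.

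Two small corrections. First, in the setting of Lemma~\ref{L:HolderXeps} the noise has already been mollified in Step~1, so the correlation $f$ satisfies the \emph{strengthened} Dalang condition with $\alpha=1$, i.e.\ $\hat f$ is a finite measure. This makes the domination you worry about in your last paragraph trivial: since $0\le\widehat{G^\epsilon}(s,\xi),\widehat G(s,\xi)\le 1$, you simply bound $|\widehat{G^\epsilon}-\widehat G|^2\le 1$ and invoke $\int_{\R^d}\hat f(\ud\xi)<\infty$. Second, your proposed inequality $|\widehat{G^\epsilon}(s,\xi)|\le\widehat G(cs,\xi)$ actually goes the wrong way: because $(e^{-x}-1)/x\in(-1,0)$ for $x>0$, one has $\widehat{G^\epsilon}(s,\xi)\ge\widehat G(s,\xi)$. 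Fortunately, as just noted, this inequality is not needed once $\hat f$ is finite.
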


\begin{proof}
The proof of this lemma is very similar to the proof of Lemma \ref{L:Approx}. First of all, (7.9) in Step 2 of Section 7 in \cite{CH18Comparison} shows that for any $t>0$,
\[ \lim_{\epsilon\to 0^+} \sup_{x\in\R^d} \|u_\epsilon(t, x) - u(t, x)\|_2 =0.\]
Note that in Step 2 of Section 7 of \cite{CH18Comparison}, one mollifies the Laplacian, initial data and the noise at the same time. Here, we do that in separate steps. The arguments in this slightly simplified case can be carried out line-by-line. We leave the details for the interested readers. 
Then, the boundedness of initial data implies that 
\[ \sup_{\epsilon\in(0,1)}\sup_{s\in [0,T]}\sup_{x\in\R^d}
\Norm{u_\epsilon(s,x)}_p<\infty,\]
which  basically implies \eqref{eq: appro sol L2 conv}
\end{proof}

\subsection{Step 3 (Discretization in space)}
For $\delta\in (0,1)$ and $x\in\R$, denote 
\[
[x]_\delta:=\left[\frac{x}{\delta}\right] \: \delta,
\]
where $[x]$ is the function that rounds half away from zero, e.g., $[4.5]=5$ and 
$[-4.5]= -5$. Note that $[x]_\delta$ is an odd function of $x$. 
Moreover, for $x\in\R^d$, set
\[
[x]=([x_1],\dots, [x_d])\quad\text{and}\quad
[x]_\delta=([x_1]_\delta,\dots, [x_d]_\delta).
\] 
For $x\in\R^d$, denote 
\[
Q_\delta (x) := \left\{y\in\R^d: \: |x_i-y_i|\le \frac{\delta}{2}, \: i=1,\dots, d\right\}.
\]
For $\epsilon$ and $\delta\in (0,1)$, and $i,j\in \bbZ^d$, let 
\[
P_{ij}^{\epsilon,\delta} :=\int_{Q_\delta(j \delta)} G_d(\epsilon,i\delta -y)\ud y
=\prod_{k=1}^d \int_{(i_k-j_k-1/2)\delta}^{(i_k-j_k+1/2)\delta}
G_1(\epsilon,y)\ud y,
\]
where $G_d(t,x)$ is the heat kernel on $\R^d$ (see \eqref{E:HeatKernel}) and when there is no confusion from the context, we will simply write it as $G(t,x)$.

Now we consider the following infinite dimensional SDE: 
\begin{equation}\label{E:apprx2}
\begin{cases}
\displaystyle      
\ud u_{\epsilon}^\delta(t,i\delta) = 
\frac{1}{\epsilon}\sum_{j\in\bbZ^d} P_{ij}^{\epsilon,\delta} \left[ u_{\epsilon}^\delta(t,j\delta) -u_{\epsilon}^\delta(t,i\delta) \right] +\rho(u_{\epsilon}^\delta(t,i\delta)) \ud M_{i\delta}^{\epsilon}(t)
, & t >0\,, i \in \bbZ^d\,,
\\[2em]
\displaystyle      
u_{\epsilon}^\delta (0,i\delta) = (\mu * G(\epsilon, \cdot))(i \delta)\,, & i \in \bbZ^d \,. \\
\end{cases}
\end{equation}
It has a strong solution 
\begin{align}
 \label{E:SSol2}
 \begin{aligned}
u_\epsilon^\delta(t,i\delta) = (\mu * G(\epsilon, \cdot))(i \delta)
+ &\int_0^t \frac{1}{\epsilon}\sum_{j\in\bbZ^d} P_{ij}^{\epsilon,\delta} \left[ u_{\epsilon}^\delta(s,j\delta) -u_{\epsilon}^\delta(s,i\delta) \right]\ud s\\
+& \int_0^t \rho(u_{\epsilon}^\delta(s,i\delta)) \ud M_{i\delta}^{\epsilon}(s).
\end{aligned}
\end{align}

We first note that \eqref{E:SSol2} is a discretization of \eqref{E:SSol1}: If we replace $x$ in \eqref{E:SSol1} by $[x]_\delta$ and set $i=[x/\delta]$, 
we see that the first and third terms on the r.h.s. of \eqref{E:SSol1} becomes the first and third terms on the r.h.s. of \eqref{E:SSol2}, respectively. The r.h.s. of \eqref{E_:SSol1} becomes
\begin{align*}  
\frac{1}{\epsilon}
\int_0^t\int_{\R^d} &G(\epsilon,i\delta-y)\left[u_\epsilon(s,y)- u_\epsilon(s,i\delta)\right]\ud y \ud s
\\
&\approx \frac{1}{\epsilon}
\int_0^t \ud s \sum_{j\in\bbZ^d}\int_{Q_\delta(j\delta)} G(\epsilon,i\delta-y)\left[u_\epsilon(s,j\delta)- u_\epsilon(s,i\delta)\right]\ud y \\
&= \frac{1}{\epsilon}
\int_0^t \ud s \sum_{j\in\bbZ^d}P_{ij}^{\epsilon,\delta}\left[u_\epsilon(s,j\delta)- u_\epsilon(s,i\delta)\right],
\end{align*}
which is equal to the second term on the r.h.s. of \eqref{E:SSol2} (then one may put a superscript $\delta$ in $u_\epsilon$ to denote the step size of this discretization). 

Note that $\{M_{i\delta}^\epsilon(t),t\ge 0\}_{i\in\bbZ}$ is a sequence of correlated Brownian motions starting from zero with 
\begin{align*}
\E\left(M_{i\delta}^\epsilon(t) M_{j\delta}^\epsilon(s)\right) = & 
(t\wedge s) \iint_{\R^{2d}}
G(\epsilon,i\delta-y_1)G(\epsilon,j\delta-y_2)f(y_1-y_2)\ud y_1\ud y_2\\
=& (t\wedge s) \int_{\R^d} \exp\left(-2\epsilon |\xi|^2\right) \cos(\delta(i-j)\cdot \xi)\; \widehat{f}(\ud\xi) ;
\end{align*}
see \eqref{E:d<M>t}.
% Or equivalently, 
% \[
% \int_0^t \frac{1}{\epsilon}\sum_{j\in\bbZ^d} P_{ij}^{\epsilon,\delta} \left[ u_{\epsilon}^\delta(s,j\delta) -u_{\epsilon}^\delta(s,i\delta) \right]\ud s
% =\frac{1}{\epsilon}
% \int_0^t\int_{\R^d} G(\epsilon,i\delta-y)\left[u_\epsilon^\delta(s,y)- u_\epsilon^\delta(s,i\delta)\right]\ud y \ud s
% \]

\bigskip
The main result of this step is the following lemma: 

\begin{lemma}\label{L:TwoStep}
Suppose that the initial data $\mu$ is bounded, i.e., $\mu(\ud x)=g(x)\ud x$ with $g\in L^\infty(\R^d)$. 
Let $u_\epsilon(t,x)$ be the strong solution \eqref{E:SSol1} to \eqref{E:apprx} and for any $\epsilon,\delta\in (0,1)$, let $u_\epsilon^\delta(t,[x]_\delta)$ be the system of stochastic differential equations given in \eqref{E:apprx2}. 
Then for any $t>0$, $x\in\R^d$ and $p\ge 2$,  it holds that 
\begin{align}\label{E:Step2} 
 \lim_{\delta\rightarrow 0_+}\sup_{x\in \R^d} \Norm{u_\epsilon(t,x) - u_\epsilon^\delta(t,[x]_\delta)}_p  =0. 
\end{align}
\end{lemma}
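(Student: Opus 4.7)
The plan is to put both $u_\epsilon$ and $u_\epsilon^\delta$ on a common probability space, subtract their defining integral equations at the lattice points $i\delta$, and close a Grönwall-type estimate for the difference. By the spatial Lipschitz continuity of Lemma~\ref{L:HolderXeps}, $\sup_{x \in \R^d} \|u_\epsilon(t, x) - u_\epsilon(t, [x]_\delta)\|_p \le C(\epsilon, T, p)\, \delta\sqrt{d}/2$, so it suffices to bound $W_p(t) := \sup_{i \in \bbZ^d} \|w(t, i)\|_p$, where $w(t, i) := u_\epsilon(t, i\delta) - u_\epsilon^\delta(t, i\delta)$. For the noise coupling, I would realize $M_{i\delta}^\epsilon(t) := \int_{\R^d} G(\epsilon, i\delta - y) M_y(t)\, dy$ via stochastic Fubini, which directly produces the covariance prescribed in \eqref{E:apprx2}.

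Subtracting \eqref{E:SSol1} at $x = i\delta$ from \eqref{E:SSol2}, $w(t, i)$ decomposes into five contributions: (a) an initial-data mismatch $u_0(i\delta) - (u_0 * G(\epsilon,\cdot))(i\delta)$; (b) a Riemann-sum discretization error
\[
E_1(s, i) := \frac{1}{\epsilon}\sum_{j \in \bbZ^d} \int_{Q_\delta(j\delta)} G(\epsilon, i\delta - y)\bigl[u_\epsilon(s, y) - u_\epsilon(s, j\delta)\bigr]\, dy,
\]
with $\|E_1(s,i)\|_p \le C_\epsilon\, \delta/\epsilon$ by Lemma~\ref{L:HolderXeps} since $|y - j\delta| \le \delta\sqrt{d}/2$ on $Q_\delta(j\delta)$; (c) a linear discrete-Laplacian term $\frac{1}{\epsilon}\sum_j P_{ij}^{\epsilon,\delta}[w(s, j) - w(s, i)]$ that is $\ell^\infty$-contractive because $\sum_j P_{ij}^{\epsilon,\delta} = 1$; (d) a Lipschitz martingale $\int_0^t [\rho(u_\epsilon) - \rho(u_\epsilon^\delta)]\, dM_{i\delta}^\epsilon(s)$, bounded via BDG and the Lipschitz property of $\rho$ by a constant times $\int_0^t W_p(s)^2\, ds$; and (e) a residual noise-coupling term $\int_0^t \rho(u_\epsilon(s,i\delta))\, d[M_{i\delta}(s) - M_{i\delta}^\epsilon(s)]$.

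I expect the main obstacle to be term (e), whose quadratic variation under the natural coupling does not by itself vanish as $\delta \to 0$. The strategy is to exploit the $C^2$-smoothness of $f$ established post Step~1 (property \eqref{E:f'(0)=0}: $\nabla f(0) = 0$ and bounded $\nabla^2 f$): I would use stochastic Fubini to rewrite (e) and bring to bear both the spatial Lipschitz continuity of $\rho(u_\epsilon)$ inherited from Lemma~\ref{L:HolderXeps} and the uniform $L^p$-moment bound from Theorem~\ref{T:Mom}, which together yield an $L^p$ estimate tending to $0$ as $\delta \to 0$ uniformly in $i$. The initial-data mismatch (a) is handled analogously, using $u_0 \in \mathcal{S}(\R^d)$ from Step~2 to bound it by an $\epsilon$-dependent constant times a $\delta$-vanishing factor. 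Gathering the five contributions produces an inequality of the form $W_p(t)^2 \le C(\epsilon, T, p)\bigl(\eta(\delta) + \int_0^t W_p(s)^2\, ds\bigr)$ with $\eta(\delta)\to 0$ as $\delta\to 0$, to which Grönwall's lemma applies and yields $W_p(t) \to 0$, completing the proof.
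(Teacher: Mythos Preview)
Your overall architecture---split $\|u_\epsilon(t,x)-u_\epsilon^\delta(t,[x]_\delta)\|_p$ into $\|u_\epsilon(t,x)-u_\epsilon(t,[x]_\delta)\|_p$ plus a lattice difference, control the first piece by Lemma~\ref{L:HolderXeps}, then decompose the second piece and close via Gr\"onwall---is exactly the paper's approach. Your terms (b), (c), (d) are the paper's $A_1$, $A_2+A_3$, $A_4$, and those bounds go through as you describe.

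The discrepancy is in your terms (a) and (e). The paper's own proof contains neither: its four-term decomposition of $v_\epsilon^\delta(t,i\delta)=u_\epsilon^\delta(t,i\delta)-u_\epsilon(t,i\delta)$ has no initial-data mismatch and uses a single noise $M_{i\delta}^\epsilon$ in the martingale part. This is consistent with reading the driving noise in \eqref{E:apprx2} as the \emph{same} Brownian family $M_{i\delta}$ that appears in \eqref{E:apprx} (evaluated at lattice points), and the initial data as $u_0(i\delta)$; the extra $G(\epsilon,\cdot)$-convolution displayed in \eqref{E:apprx2} seems to be a notational slip that the proof itself does not carry.

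If instead one takes \eqref{E:apprx2} literally with $M_{i\delta}^\epsilon=\int G(\epsilon,i\delta-y)M_y\,\ud y$, your term (e) is a genuine obstruction and your proposed fix does not work. Under your coupling the quadratic variation of $M_{i\delta}-M_{i\delta}^\epsilon$ per unit time is
\[
f(0)-2(G(\epsilon,\cdot)*f)(0)+(G(2\epsilon,\cdot)*f)(0),
\]
which by the $C^2$ property \eqref{E:f'(0)=0} is $O(\epsilon)$ and is \emph{independent of $\delta$}. No amount of spatial Lipschitz continuity of $\rho(u_\epsilon)$ or uniform moment control can turn this into an $o_\delta(1)$ term; indeed, with the literal reading the $\delta\to0$ limit of $u_\epsilon^\delta$ would solve \eqref{E:apprx} driven by $M^\epsilon_x$ rather than $M_x$, and the lemma would simply be false. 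The correct resolution is therefore to drop (a) and (e), i.e.\ use $u_0(i\delta)$ and $M_{i\delta}$ in the discretized system, after which your argument and the paper's coincide.
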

\begin{proof}
Fix arbitrary $p\ge 2$, $t>0$ and $x\in\R^d$. 
Notice that 
\begin{align*}
\Norm{u_\epsilon(t,x) - u_\epsilon^\delta(t,[x]_\delta)}_p 
\le  &\Norm{u_\epsilon(t,x)-u_\epsilon(t,[x]_\delta)}_p+ \Norm{u_\epsilon(t,[x]_\delta) - u_\epsilon^\delta(t,[x]_\delta)}_p\\
 =:& I_1^{\epsilon,\delta}(t,x)+I_2^{\epsilon,\delta}(t,x).
\end{align*}
For $I_1^{\epsilon,\delta}$, Lemma \ref{L:HolderXeps} shows that 
\begin{equation}\label{E:diff}
\sup_{x\in \R^d} I_1^{\epsilon,\delta}(t,x) \leq \sup_{s\in[0,t]}\sup_{j\in\bbZ^d}\sup_{x,y\in Q_\delta(j\delta)}
\Norm{u_\epsilon(s,x)-u_\epsilon(s,y)}_p\le C_\epsilon \delta.
\end{equation}
Now we study $I_2^{\epsilon,\delta}$. 
Denote $v_\epsilon^\delta(t, [x]_\delta):= u_\epsilon^\delta (t, [x]_\delta)-u_\epsilon(t, [x]_\delta)$. 
By setting $i=[x/\delta]$, we see that
\[
v_\epsilon(t, i\delta)= \sum_{\ell=1}^4  A_\ell^{\epsilon, \delta}(t, i\delta),
\]
where  
\begin{align*}
A_1^{\epsilon, \delta} (t, i\delta)&:= \frac{1}{\epsilon} \int_0^t\ud s \sum_{j\in \Z^d} \int_{Q_\delta(j\delta)}\ud y\:  G(\epsilon, i\delta-y) \left[ u_\epsilon(s, y)-u_\epsilon(s, j\delta)\right] ,\\
A_2^{\epsilon, \delta} (t, i\delta)&:=  \frac{1}{\epsilon} \int_0^t \ud s\sum_{j\in \Z^d} \int_{Q_\delta(j\delta)}\ud y\:  G(\epsilon, i\delta-y)v^\delta_\epsilon (s, j\delta), \\
A_3^{\epsilon, \delta} (t, i\delta)&:= -\frac{1}{\epsilon} \int_0^t  v^\delta_\epsilon (s, i\delta) \, \ud s,  \\
A_4^{\epsilon, \delta} (t, i\delta)&:= \int_0^t \left[
\rho\left( u_\epsilon^\delta (s, i\delta) \right) 
-\rho\left( u_\epsilon(s, i\delta) \right)
\right] dM^\epsilon_{i\delta}(s) .
\end{align*}
By Lemma \ref{L:HolderXeps} again (see also \eqref{E:diff}), \[
\Norm{A_1^{\epsilon, \delta} (t, i\delta)}_p \le C_\epsilon\delta.
\]
For $A_2^{\epsilon, \delta}$ and $A_3^{\epsilon, \delta}$,
by Minkowski's inequality, we see that 
\begin{align*}
\max_{\ell=1,2}\Norm{A_\ell^{\epsilon, \delta} (t, i\delta)}_p &
\le \frac{1}{\epsilon} \int_0^t  \sup_{j\in \Z^d}  \Norm{v_\epsilon^\delta(s, j\delta)}_p ds.
\end{align*}
For $A_4^{\epsilon, \delta}$, by the same argument as \eqref{E:I4BDG}, we see that  
\begin{align*} 
\Norm{A_4^{\epsilon, \delta}}_p^2 
 \le & C_\epsilon \int_0^t \sup_{j\in \Z^d}  \Norm{v_\epsilon^\delta(s, j\delta)}_p^2 \ud s.
\end{align*}
Combining these terms we see that 
\[
\sup_{j\in \Z^d}  \Norm{v_\epsilon^\delta(t, j\delta)}_p^2 
\le C_\epsilon\delta^2+ 
C_\epsilon \int_0^t \sup_{j\in \Z^d}  \Norm{v_\epsilon^\delta(s, j\delta)}_p^2 \ud s.
\]
An application of Gronwall's lemma implies that 
\begin{align}\label{E:I2}
I_2^{\epsilon,\delta} \le \sup_{j\in \Z^d}  \Norm{v_\epsilon^\delta(t, j\delta)}_p\le 
C_\epsilon \delta.
\end{align}
Finally, \eqref{E:diff} and  \eqref{E:I2} together prove \eqref{E:Step2}.
\end{proof}

\subsection{Step 4 (Proof of Theorems \ref{T:MomComp}, \ref{T2:MomComp} and Corollary \ref{C:SPDE:Slepian})}\label{SS:Step4}
We now combine Steps 1--3 above to  construct solutions to the infinite dimensional SDEs \eqref{E:apprx2} which converge in $L^p(\Omega)$ to the unique solution $u(t, x)$ to SHE \eqref{E:SHE} with rough initial data and driven by Gaussian noise whose spatial spectral measure only satisfies Dalang's condition \eqref{E:Dalang}. 
Let us fix arbitrary $t>0$, $x\in\R^d$ and $p\ge 2$.

Step 1 shows that  there exist a solution $u_{\epsilon_1, \epsilon_1'}(t, x)$ to \eqref{E:SHE} with bounded initial data (see Lemma \ref{L:initial}) and driven by Gaussian noise whose spatial spectral measure satisfies the strengthened Dalang's condition \eqref{E:DalangAlpha} with $\alpha=1$ such that 
\begin{align} 
\lim_{\epsilon_1'\to 0^+}\lim_{\epsilon_1\to 0^+} \|u(t, x)-u_{\epsilon_1, \epsilon_1'}(t, x) \|_p =0, \tag{by Lemma \ref{L:Approx}}
\end{align}
where $\epsilon_1$ (resp. $\epsilon_1'$) refers to the mollification for the noise (resp. the initial data) as in part (2) (resp. part (1)) of Lemma \ref{L:Approx}.

Step 2 now implies that there exists a strong solution $u_{\epsilon_1, \epsilon_1',\epsilon_2} (t,x)$ to \eqref{E:apprx} such that  
\begin{align}
\lim_{\epsilon_2\to 0^+} \Norm{u_{\epsilon_1, \epsilon_1'} (t, x)-u_{\epsilon_1, \epsilon_1',\epsilon_2}(t, x)}_p =0.  \tag{by Lemma \ref{L:ApproxLaplacian}}  
\end{align}

Step 3  shows that there exists a solution $u_{\epsilon_1, \epsilon_1',\epsilon_2}^{\delta}(t, [x]_\delta)$ to the infinite dimensional SDE \eqref{E:apprx2} such that 
\begin{align}
\lim_{\delta\to 0^+} \Norm{u_{\epsilon_1, \epsilon_1',\epsilon_2}(t, x)-u_{\epsilon_1, \epsilon_1',\epsilon_2}^{\delta}(t, [x]_\delta)}_p =0. \tag{by Lemma \ref{L:TwoStep}}
\end{align}

Now it is easy to check that \eqref{E:apprx2} is of the form 
\eqref{E:SDE1}, i.e., Assumption \ref{A:SDE} is satisfied. 
In particular, part (iv) of Assumption \ref{A:SDE} is satisfied thanks to Lemma \ref{L:initial}. Thus, an application of Theorems \ref{T:MomComSDE1} and  \ref{T:MomComSDE2} completes the proof of Theorems \ref{T:MomComp} and \ref{T2:MomComp}. 
Note that the two cases, namely, the multiple-time comparison over $\bbF[C_{p,+}^{2,v}]$ or $\bbF[C_{b,-}^{2,v}]$ and the single-time comparison over $\bbF[C_p^{2,v}]$, are treated separately in the proofs of Theorems \ref{T:MomComSDE1} and  \ref{T:MomComSDE2} below.
\bigskip

It remains to prove Corollary \ref{C:SPDE:Slepian}. Under condition (i), there exists some $\epsilon_0>0$ such that $f_1([-\epsilon_0,\epsilon_0]^d)=f_2([-\epsilon_0,\epsilon_0]^d)$, which, together with the fact that $f_1-f_2$ is a nonnegative measure, imply that for all $\epsilon\in (0,\epsilon_0]$, $f_1^{\epsilon, \epsilon}(0)=f_2^{\epsilon, \epsilon}(0)$ and $f_1^{\epsilon, \epsilon}(x)-f_2^{\epsilon, \epsilon}(x) \geq 0$ for all $x\in \R^d$ where $f^{\epsilon, \epsilon}_i$ is defined in Lemma \ref{L:Approx}. Thus, the result is a consequence of the approximation procedure and Corollary \ref{C:Slepian}. Under condition (ii), since $f_\ell \in C_b^2(\R^d;\R_+)$, $\ell=1,2$, $f_\ell$ have to satisfy properties in \eqref{E:fee}. Hence, in Step 1, we do not need to mollify the noise, or equivalently, we could set $\epsilon_1=0$.
We keep the approximations. Then one can apply Corollary \ref{C:Slepian} to conclude this case. This proves Corollary \ref{C:Slepian}. 
\qed

\section{Stochastic comparison principles for interacting diffusions} \label{S:MomComSDE}

In this section, we will study the interacting diffusion equations \eqref{E:SDE1} and prove Theorems \ref{T:ExtUniqSDE1}, \ref{T:MomComSDE1}, \ref{T:MomComSDE2}, \ref{T:FiniteCc}, and Corollary \ref{C:Slepian}.

\subsection{Existence and uniqueness (Proof of Theorem \ref{T:ExtUniqSDE1})}\label{SS:Existence}

\begin{proof}[Proof of Theorem \ref{T:ExtUniqSDE1}]
To show the existence of a solution, we use the standard Picard iteration. 
For $n=0$, set $U^{(0)} (t,i):=u_0(i)$ and for any $n\ge 1$, define recursively
\begin{align}\notag
 U^{(n+1)}(t,i)= & u_0(i) + \kappa \int_0^t \sum_{j\in K} p_{i,j} \left(U^{(n)}(s,j) - U^{(n)} (s, i) \right) \, \ud s  + \int_0^t \rho\left( U^{(n)}(s,i) \right)\ud M_i(s)\\
 =:& u_0(i) + I_n(t,i) + R_n(t,i).\label{E:Picard} 
\end{align}
Choose and fix an arbitrary integer $k\ge 2$.
Without loss of generality, we may assume $k$ is an even integer.
We first show that all $U^{(n)}(t,\cdot)$'s are in $\ell^k(K)$ almost surely for any $t\ge 0$. 
For any random field $Z(t,i)$, define
\[ 
\mathcal{N}_{\beta, k}(Z):=\sup_{t\geq 0} e^{-\beta t} 
\E \left(\|Z(t,\cdot)\|_{\ell^k(K)}^k\right),\qquad \beta\ge 0.
\] 
Note that $\mathcal{N}_{\beta, k}^{1/k}(Z)$ is a norm on the random field. 
Then by the Minkowski inequality, 
\begin{align}\notag
\calN_{\beta,k}\left(U^{(n+1)}\right)
&\le 
\left(
\calN_{\beta,k}^{1/k}(u_0) +\calN_{\beta,k}^{1/k}(I_n)
+\calN_{\beta,k}^{1/k}(R_n)
\right)^{k}\\
&\le 
3^{k-1} \left(\calN_{\beta,k}(u_0) 
+\calN_{\beta,k}(I_n)
+\calN_{\beta,k}(R_n)\right).
\label{E_:NU_n+1}
\end{align}
We will compute the three $\mathcal{N}_{\beta, k}^{1/k}(\cdot)$ norms in the right-hand side of \eqref{E_:NU_n+1}.
It is clear that 
\begin{align}\label{E_:Nu0}
\calN_{\beta,k}(u_0) = \Norm{u_0}_{\ell^k(K)}^k.
\end{align}
As for $\calN_{\beta,k}(I_n)$, because $k$ is even, we have that 
\begin{align*}
 &\sum_{i\in K} \E\left(\left| \int_0^t  \sum_{j\in K} p_{i,j} \left[ U^{(n)}(s,j) - U^{(n)}(s,i)\right] \ud s   \right|^k\right)\\
 = &
 \sum_{i\in K}\E
 \int_0^t\ud s_1\cdots\int_0^t\ud s_k
 \sum_{j_1\in K}\cdots \sum_{j_k\in K} 
 \prod_{\ell=1}^k  p_{i,j_\ell} \left[U^{(n)}(s_\ell,j_\ell)-U^{(n)}(s_\ell,i)\right]\\
  = &
 \sum_{i\in K}\E
 \int_0^t\ud s_1 e^{\beta s_1/k}\cdots\int_0^t\ud s_k e^{\beta s_k /k}
 \sum_{j_1\in K}\cdots \sum_{j_k\in K} 
 \left(\prod_{\ell=1}^k  p_{i,j_\ell} \right) \\
 &\quad \times \prod_{\ell'=1}^k
 e^{-\beta s_{\ell'}/k}\left[U^{(n)}(s_{\ell'},j_{\ell'})-U^{(n)}(s_{\ell'},i)\right].
 \end{align*}
By the inequality $\prod_{i=1}^k a_i\le (a_1^k+\cdots + a_k^k)/k$ applied to the product over $\ell'$, we see that
\begin{align*}
\prod_{\ell'=1}^k
 e^{-\beta s_{\ell'}/k}\left[U^{(n)}(s_{\ell'},j_{\ell'})-U^{(n)}(s_{\ell'},i)\right]
& \le 
 \frac{1}{k}\sum_{\ell'=1}^k
 \left(e^{-\beta s_{\ell'}}\left[U^{(n)}(s_{\ell'},j_{\ell'})-U^{(n)}(s_{\ell'},i)\right]^k\right)\\
 & \le 
 \frac{2^{k-1}}{k}\sum_{\ell'=1}^k
 \left(e^{-\beta s_{\ell'}}\left[U^{(n)}(s_{\ell'},j_{\ell'})^k+U^{(n)}(s_{\ell'},i)^k\right]\right).
 \end{align*}
Hence, 
\begin{align*}
 &\sum_{i\in K} \E\left(\left| \int_0^t  \sum_{j\in K} p_{i,j} \left[ U^{(n)}(s,j) - U^{(n)}(s,i)\right] \ud s   \right|^k\right)\\
 \le &
 \frac{2^{k-1}}{k}
 \sum_{\ell'=1}^k
 \int_0^t\ud s_1 e^{\beta s_1/k}\cdots\int_0^t\ud s_k e^{\beta s_k/k}
 \sum_{i\in K}\sum_{j_1\in K}\cdots \sum_{j_k\in K} 
 \left(\prod_{\ell=1}^k  p_{i,j_\ell}\right) \\
 &\quad \times \left[e^{-\beta s_{\ell'}} \E \left(U^{(n)}(s_{\ell'},j_{\ell'})^k\right)+e^{-\beta s_{\ell'}}\E\left( U^{(n)}(s_{\ell'},i)^k\right)\right]\\
 \le& \frac{2^{k-1}}{k}(1+\Lambda)
 \sum_{\ell'=1}^k
 \int_0^t\ud s_1 e^{\beta s_1/k}\cdots\int_0^t\ud s_k e^{\beta s_k/k}
 \left[e^{-\beta s_{\ell'}} \E\left(\Norm{U^{(n)}(s_{\ell'},\cdot)}_{\ell^k(K)}^k\right)\right]\\
 =& \frac{2^{k-1}}{k} (1+\Lambda)\: \calN_{\beta,k}\left(U^{(n)}\right)
 \sum_{\ell'=1}^k
 \int_0^t\ud s_1 e^{\beta s_1/k}\cdots\int_0^t\ud s_k e^{\beta s_k/k}
 \\
 \leq & 2^{k-1}(1+\Lambda)\: \calN_{\beta,k}\left(U^{(n)}\right)
 \left(\frac{k}{\beta}\right)^k e^{t\beta},
\end{align*}
where we have used the assumption \eqref{E:Lambda} and the fact that $\sum_{j\in K}p_{i,j}=1$.
Therefore, 
\begin{align}
 \label{E_:NIn}
\calN_{\beta,k}(I_n)\le 
\frac{1+\Lambda}{2}\left(\frac{2\kappa k}{\beta}\right)^k\calN_{\beta,k}\left(U^{(n)}\right).
\end{align}
 
Now let us consider $\calN_{\beta,k}(R_n)$. By the Burkholder-Davis-Gundy inequality, we have that
\begin{align*}
\E\left(\left| \int_0^t \rho\left( U^{(n)}(s,i)\right) \ud M_i(s)\right|^k\right)
\leq&
c_k \E\left(\left[ \int_0^t \rho\left( U^{(n)}(s,i)\right)^2 \gamma(0) \ud s\right]^{k/2}\right)\\
\le& 
c_k \Lip_\rho^k \gamma(0)^{k/2}
\int_0^t\ud s_1\cdots 
\int_0^t\ud s_{k/2}\E\left(\prod_{\ell=1}^{k/2}U^{(n)}(s_\ell,i)^2\right),
\end{align*}
where $c_k$ is some universal constant and we have used the fact that $k$ is an even integer. 
By the same arguments as above, 
\begin{align*}
&\int_0^t\ud s_1\cdots 
\int_0^t\ud s_{k/2}\E\left(\prod_{\ell=1}^{k/2}U^{(n)}(s_\ell,i)^2\right)
\\
=&  \int_0^t\ud s_1 e^{2\beta s_1/k}\cdots 
\int_0^t\ud s_{k/2}e^{2\beta s_{k/2}/k}
\E\left(\prod_{\ell=1}^{k/2}e^{-2\beta s_\ell/k} U^{(n)}(s_\ell,i)^2\right)\\
\le & \frac{2}{k}\int_0^t\ud s_1 e^{2\beta s_1/k}\cdots 
\int_0^t\ud s_{k/2}e^{2\beta s_{k/2}/k} \sum_{\ell=1}^{k/2}
e^{-\beta s_\ell}   \E\left(U^{(n)}(s_\ell,i)^k\right). 
\end{align*}
Thus, 
\begin{align*}
&\sum_{i\in K}\E\left| \int_0^t \rho\left( U^{(n)}(s,i)\right) \ud M_i(s)\right|^k\\
&\le 
c_k \Lip_\rho^k \gamma(0)^{k/2}\frac{2}{k}\int_0^t\ud s_1 e^{2\beta s_1/k}\cdots 
\int_0^t\ud s_{k/2}e^{2\beta s_{k/2}/k} \sum_{\ell=1}^{k/2}
e^{-\beta s_\ell} \:  \E\left(\Norm{U^{(n)}(s_\ell,\cdot)}_{\ell^k(K)}^k \right)\\
&\le 
c_k \Lip_\rho^k \gamma(0)^{k/2}\calN_{\beta,k}\left(U^{(n)}\right) \int_0^t\ud s_1 e^{2\beta s_1/k}\cdots 
\int_0^t\ud s_{k/2}e^{2\beta s_{k/2}/k} \\
&= 
c_k \Lip_\rho^k \gamma(0)^{k/2}\calN_{\beta,k}\left(U^{(n)}\right)\left(\frac{k}{2\beta}\right)^{k/2} e^{\beta  t}, 
\end{align*}
which implies that 
\begin{align}\label{E_:NRn}
\calN_{\beta,k}(R_n)
\le 
c_k \Lip_\rho^k \gamma(0)^{k/2}\left(\frac{k}{2\beta}\right)^{k/2} \calN_{\beta,k}\left(U^{(n)}\right).
\end{align}

Putting \eqref{E_:Nu0}, \eqref{E_:NIn} and \eqref{E_:NRn} back to \eqref{E_:NU_n+1} shows that
\[
\calN_{\beta,k}\left(U^{(n+1)}\right)
\le 3^{k-1} 
\Norm{u_0}_{\ell^k(K)}^k 
+C_k(\beta)\: \calN_{\beta,k}\left(U^{(n)}\right),
\]
where
\begin{align}\label{E:CkBeta}
C_k(\beta):=3^{k-1} \left[\frac{1+\Lambda}{2}\left(\frac{2\kappa k}{\beta}\right)^k+
c_k \Lip_\rho^k \gamma(0)^{k/2}\left(\frac{k}{2\beta}\right)^{k/2} \right].
\end{align}
It is clear that $\beta\mapsto C_k(\beta)$ is a strictly decreasing function  for all $\beta\ge 0$. 
Therefore, by choosing $\beta_*$ to be the unique positive solution to the equation $C_k(\beta)=1/2$, we have that
\begin{align}\notag
\calN_{\beta_*,k}\left(U^{(n+1)}\right)
\le & 
3^{k-1} \Norm{u_0}_{\ell^k(K)}^k 
+\frac{1}{2} \calN_{\beta_*,k}\left(U^{(n)}\right)\\ \notag
\le & 
3^{k-1} \Norm{u_0}_{\ell^k(K)}^k \left(1+\frac{1}{2}+\cdots\right)\\
\le & 3^{k} 
\Norm{u_0}_{\ell^k(K)}^k.
\label{E_:Nun+1}
\end{align}
Therefore, we have that
\begin{equation} \label{E:MomBound}
\sup_{n\geq 0} \sup_{0\leq t\leq T} \E \left(\Norm{U^{(n)}(t,\cdot)}^k_{\ell^k(K)}\right) \leq  3^k \|u_0\|^k_{\ell^k(K)} e^{\beta_* T}. 
\end{equation}
This implies that $U^{(n)}(t,\cdot)$ for all $n\geq 1$ is well-defined and in $L^\infty\left([0,T]; L^k(\Omega; \ell^k(K))\right)$.  

Let $V^{(n)}(t,i):=U^{(n+1)}(t,i)-U^{(n)}(t,i)$ for $n\ge 0$. 
Since $\rho$ is globally Lipschitz with Lipschitz constant $\Lip_\rho$, following the same process as above, we can have 
\[
  \mathcal{N}_{\beta_*,k}\left( V^{(n)}\right) \leq  \frac{1}{2}\mathcal{N}_{\beta_*,k}\left( V^{(n-1)}\right)\leq \cdots \leq \frac{1}{2^n} \mathcal{N}_{\beta_*,k}\left( V^{(0)}\right).
\]
Since $V^{(0)} = U^{(1)} -u_0$, we see that
\begin{align*}
\calN_{\beta_*,k}\left(V^{(0)}\right) \le &
\left[\calN_{\beta_*,k}^{1/k}\left(U^{(1)}\right)+\calN_{\beta_*,k}^{1/k}\left(u_0\right)\right]^k\\
\le & 2^{k-1} \calN_{\beta_*,k}\left(U^{(1)}\right)+2^{k-1} \Norm{u_0}_{\ell^k(K)}^k\\
\le & 2^{k-1}\left(3^k+1\right) \Norm{u_0}_{\ell^k(K)}^k.
\end{align*}
Thus, $\sum_{n=0}^\infty \mathcal{N}_{\beta_*,k}^{1/k}\left( V^{(n)}\right)<\infty$, which implies that $\left\{U^{(n)}\right\}_{n\in\bbN}$ is a Cauchy sequence in the Banach space with the norm $\calN_{\beta_*,k}^{1/k}(\cdot)$.
As a consequence,
\[
u:=\lim_{n\to \infty} U^{(n)} \quad\text{ in $L^\infty\left([0,T]; L^k(\Omega; \ell^k(K))\right)$}.  
\]
Fatou's lemma and \eqref{E:MomBound} imply that 
\begin{equation} 
\sup_{0\leq t\leq T} \E \left(\Norm{U(t,\cdot)}^k_{\ell^k(K)}\right) \leq 3^k\|u_0\|^k_{\ell^k(K)} \exp\left(\beta_* T \right),
\end{equation} 
which, together with Lemma \ref{L:k^2} below, proves the second inequality in \eqref{E:lkMom}.
The first inequality in \eqref{E:lkMom} is due to the trivial fact that 
\begin{align}\label{E:Sup<Lp}
\sup_{i\in K}|U(t,i)|^p \le \sum_{i\in K}|U(t,i)|^p,\qquad \text{a.s.} 
\end{align}
In addition, using the convergence from $U^{(n)}$ to $U$ in $L^\infty\left([0,T]; L^2(\Omega; \ell^2(K))\right)$, it is easy to see that $U$ satisfies \eqref{E:SDE1-Strong}. 
The proof of uniqueness follows from a standard argument. We will not repeat here.
%\begin{equation}\label{E:SDEinteg}
%U(t,i) =u_0(i)+ \kappa \int_0^t \sum_{j\in K} p_{i,j} \left( -\delta(i,j))u(s,j) \, \ud s + \int_0^t\rho (u(s,i)) \ud M_i(s), \quad i\in K, t>0.
%\end{equation}
% 
% As for \eqref{E:lkMom}, one needs to work under the following norm, for  a random field $Z(t,i)$,
% \[ 
% \mathcal{M}_{\beta,k}(Z):=\sup_{t\geq 0} \sup_{i\in K} e^{-\beta t}\, \E |Z(t,i)|^k. 
% \]
% %Recall $\|X\|_k:=\left(\E |X|^k\right)^{1/k}$. 
% Then by similar arguments as above, one can show that
% \begin{equation*}
% \mathcal{M}_{\beta, k} \left( U^{(n+1)} \right) \leq 4^k \sup_{i\in K} |u_0(i)|^k +   \frac{1}{2}\mathcal{M}_{\beta, k} \left( U^{(n)} \right),
% \end{equation*}where $\beta:=\max\{32k\kappa, 64k\Lip_\rho^2 \gamma(0)\}$.  Since $u_0\in \ell^2(K) \subset \ell^\infty(K)$, Fatou's lemma implies \eqref{E:lkMom}. 
This completes the proof of Theorem \ref{T:ExtUniqSDE1}.
\end{proof}

% \begin{remark}
% If one wants to study intermittency property of the solution, one needs to know the dependence of $\beta_*$ on $k$. Indeed, it is known that the constant $c_k$ in the Burkholder-Davis-Gundy inequality can be chosen to be $c_k = 2^k k^{k/2}$ (see, e.g., \cite{ChenDalang13Heat}).  Then by solving the equation $C_k(\beta)=1/2$, which is solvable explicitly since it is an essentially a quadratic equation, one finds that the growth of $\beta_*$ with resepect to $k$ is indeed $k^2$.
% \end{remark}

\begin{lemma}\label{L:k^2}
Let $C_k(\beta)$ be the constant defined in \eqref{E:CkBeta} and $\beta_*$ be the positive solution to the equation $C_k(\beta)=1/2$. Then for some constant $C=C(\kappa,\gamma(0),\Lip_\rho,\Lambda)$, it holds that $\beta_*\le C k^2$ for all $k\ge 2$.
\end{lemma}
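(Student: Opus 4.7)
\textbf{Proof plan for Lemma \ref{L:k^2}.}

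Since $\beta \mapsto C_k(\beta)$ is strictly decreasing and $C_k(\beta_*) = 1/2$, it suffices to produce a constant $A = A(\kappa, \gamma(0), \Lip_\rho, \Lambda) > 0$ and verify that $C_k(A k^2) \le 1/2$ for every $k \ge 2$; then $\beta_* \le A k^2$ follows immediately by monotonicity. So the plan is to substitute $\beta = A k^2$ into the explicit expression for $C_k(\beta)$ and bound each of the two summands.

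The first ingredient I would invoke is a standard form of the Burkholder--Davis--Gundy inequality for continuous martingales, which gives $c_k \le (c_0\, k)^{k/2}$ for some universal $c_0 > 0$ and all $k \ge 2$. With $\beta = A k^2$, the two terms of $C_k(\beta)$ become
\begin{align*}
T_1 &:= 3^{k-1} \cdot \frac{1+\Lambda}{2} \cdot \left(\frac{2\kappa k}{A k^2}\right)^{k} \;=\; \frac{1+\Lambda}{6} \cdot \left(\frac{6\kappa}{A k}\right)^{k},\\
T_2 &:= 3^{k-1}\, c_k\, \Lip_\rho^{k}\, \gamma(0)^{k/2} \cdot \left(\frac{k}{2 A k^{2}}\right)^{k/2} \;\le\; \frac{1}{3}\left(3\,\Lip_\rho\,\sqrt{\tfrac{c_0\,\gamma(0)}{2A}}\right)^{\!k}.
\end{align*}

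Now I would fix $A$ so that simultaneously $6\kappa/A \le 1/2$ and $3\Lip_\rho\sqrt{c_0 \gamma(0)/(2A)} \le 1/2$; concretely one can take $A = \max\bigl(12\kappa,\; 18\, c_0 \Lip_\rho^{2}\,\gamma(0)\bigr)$. Under this choice, for every $k \ge 2$ one obtains $T_1 \le \tfrac{1+\Lambda}{6}\,(1/(2k))^{k} \le \tfrac{1+\Lambda}{24}\,2^{-(k-2)}$ and $T_2 \le \tfrac{1}{3}(1/2)^{k} \le 1/12$, so that by possibly enlarging $A$ by a $\Lambda$-dependent factor we can force $T_1 + T_2 \le 1/2$ uniformly in $k \ge 2$. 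This delivers $C_k(A k^{2}) \le 1/2$, and hence $\beta_* \le A k^{2}$, with $A$ depending only on $\kappa, \gamma(0), \Lip_\rho, \Lambda$, as required.

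The only nontrivial point is producing a universal BDG constant of the form $c_k \le (c_0 k)^{k/2}$; this is standard (e.g.\ via the sharp Burkholder bound $c_k^{1/k} \le C\sqrt{k}$ for continuous martingales), so I do not anticipate any real obstacle. Once the BDG bound is in hand, the remainder is just the bookkeeping of the exponential factors $3^{k-1}$, $\Lip_\rho^k$, $\gamma(0)^{k/2}$ against the strong decay supplied by the choice $\beta = A k^2$.
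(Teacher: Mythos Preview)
Your argument is correct and, in fact, cleaner than the paper's. The paper proceeds by taking the explicit BDG constant $c_k = 2^k k^{k/2}$, substituting $x=\beta^{-k/2}$ so that $C_k(\beta)=1/2$ becomes a quadratic in $x$, solving it by the quadratic formula, and then bounding the resulting closed-form expression for $\beta_*$ via subadditivity of $t\mapsto t^\alpha$ for $\alpha\in(0,1]$. (It also restricts, ``for simplicity'', to the symmetric case $\Lambda=1$.) You instead exploit monotonicity of $\beta\mapsto C_k(\beta)$ directly: produce $A$ with $C_k(Ak^2)\le 1/2$ and conclude $\beta_*\le Ak^2$. This bypasses the quadratic-formula manipulation entirely, treats general $\Lambda$ with no extra work, and only requires the qualitative BDG input $c_k\le (c_0 k)^{k/2}$ rather than an explicit value. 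The only place your write-up is a bit loose is the phrase ``possibly enlarging $A$ by a $\Lambda$-dependent factor''; it would be tidier to build $\Lambda$ into $A$ from the outset (e.g.\ take $A=\max\{12\kappa(1+\Lambda),\,18c_0\Lip_\rho^2\gamma(0)\}$ so that $T_1\le \tfrac{1+\Lambda}{6}\bigl(\tfrac{1}{2(1+\Lambda)k}\bigr)^k\le \tfrac{1}{6\cdot 4(1+\Lambda)}$ already for $k\ge2$), but this is cosmetic.
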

\begin{proof}
For simplicity, we will only prove the symmetric case, i.e., $p_{i,j}=p_{j,i}$, in which case, $\Lambda\equiv 1$. 
We first remark that the constant $c_k$ in the Burkholder-Davis-Gundy inequality can be chosen to be $c_k = 2^k k^{k/2}$ (see, e.g., \cite{ChenDalang13Heat}). In order to solve the equation 
$C_k(\beta)=1/2$, set $x=\beta^{-k/2}$. Then equivalently, $x$ solves 
\[
\left(6\kappa k\right)^k x^2+
\left(18\gamma(0) k^2\Lip_\rho^2\right)^{k/2} x
-3/2=0.
\]
By finding the positive solution and then taking the power of $-2/k$, we see that
\begin{align*}
\beta_* =&
2 \times  3^{1-2/k}
   \left( \left[3\gamma k^2 \Lip_\rho^2\right]^{k/2}+\sqrt{6
   (\kappa  k)^k+\left[3\gamma k^2 \Lip_\rho^2\right]^k}\right)^{2/k}\\
\le & 2 \times  3^{1-2/k}
   \left( 2\left[3\gamma k^2 \Lip_\rho^2\right]^{k/2}+\sqrt{6}
   (\kappa  k)^{k/2}\right)^{2/k}\\
\le &   
2 \times  3^{1-2/k}
   \left( 2^{2/k}3\gamma k^2 \Lip_\rho^2 +6^{1/k}
   \kappa  k\right)    \\
   \le &   
6\left( 3\gamma k^2 \Lip_\rho^2 +\kappa  k\right),    
\end{align*}
where we have applied twice the subadditivity property of the function $\R_+ \ni x\mapsto x^{\alpha}$ for $\alpha\in (0,1]$.
Therefore, one can find a constant $C$ depending on $\kappa$, $\gamma(0)$ and $\Lip_\rho$ such that $\beta_*\le C k^2$ for all $k\ge 2$.
\end{proof}

\subsection{Several approximations}\label{SS:Aprox2}

In this subsection, we will reduce the SDE \eqref{E:SDE1} to the case in Theorem \ref{T:FiniteCc}. We will need to approximate the solution to \eqref{E:SDE1} in the following two cases:
The first case is to approximate \eqref{E:SDE1} by that with a $C_c^2(\R_+)$ diffusion coefficient. This is covered in two steps through Propositions \ref{P:rhoBD} and \ref{P:rho2} below.
The second case is to approximate \eqref{E:SDE1} by a finite dimensional SDE and this is covered by Proposition \ref{P:Approx1} below. 

\bigskip

{\bf\noindent Case 1. ~}
Define 
\begin{align}
\rho_N(x):=\rho(x)\one_{\{|x|\le N\}} + \rho\left(\sgn(x) N\right)\left(2-|x|/N\right)\one_{\{N\le |x|\le 2N\}}.
\end{align}
Since $\rho$ is a globally Lipschitz continuous function with the Lipschitz constant $\Lip_\rho$ and $\rho(0)=0$, it is easy to see that $\rho_N$ is also globally Lipschitz such that
\begin{align}\label{E:LipNLip}
\Lip_{\rho_N}\le \Lip_{\rho} \qquad \text{and}\qquad \rho_N(0)=0, 
\end{align}
which imply that $|\rho_N(x)| \leq \Lip_\rho|x|$. Consider

\begin{equation}\label{E:SDE3}
 \left\{
\begin{array}{ll}
\displaystyle  
 \ud U_N(t,i)=\kappa \sum_{j\in K} p_{i,j}\left( U_N(t, j)- U_N(t, i)\right)  \, \ud t + \rho_N (U_N(t,i)) \ud M_i(t),
&i\in K, t>0, 
\\[2em]
\displaystyle      
U_N (0,i) =u_0(i)\,, & i \in K.\,\\
\end{array}
\right.
\end{equation}
The existence and uniqueness of a strong solution in the space \eqref{E:DiscreteSpace} follows from Theorem \ref{T:ExtUniqSDE1}.

\begin{proposition}\label{P:rhoBD}
Let $U(t,i)$ and $U_N(t,i)$ be solutions to \eqref{E:SDE1} and \eqref{E:SDE3}, respectively, with the same initial data $u_0(\cdot)\in \ell^2(K)$.  Then, for any $T> 0$ and $k\ge 2$, we have that
\begin{equation}\label{E:MomBDrho1}
\sup_{t\in[0,T]}\E\left(\sup_{i\in K} \left|U(t,i)-U_N(t,i)\right|^k \right)
\le 
\sup_{t\in[0,T]}\E\left( \Norm{U(t,i)-U_N(t,i)}_{\ell^k(K)}^k \right)
\rightarrow 0,
\end{equation}
as $N\rightarrow+\infty$.
\end{proposition}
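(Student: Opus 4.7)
The plan is to run the Picard-type contraction argument from the proof of Theorem \ref{T:ExtUniqSDE1} directly on the difference $V_N(t,i) := U(t,i) - U_N(t,i)$, using the time-localized norm
\[
\mathcal{N}_{\beta,k,T}(Z) := \sup_{t \in [0,T]} e^{-\beta t}\,\E\left[\Norm{Z(t,\cdot)}_{\ell^k(K)}^k\right].
\]
The first inequality in \eqref{E:MomBDrho1} is immediate from \eqref{E:Sup<Lp}, so only the second quantity needs attention. Subtracting \eqref{E:SDE1-Strong} written for $U$ and for $U_N$, $V_N$ satisfies an integral equation whose drift is linear in $V_N$ and whose stochastic integral has integrand $\rho(U(s,i)) - \rho_N(U_N(s,i))$.

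The key algebraic decomposition is
\[
\rho(U(s,i)) - \rho_N(U_N(s,i)) = \bigl[\rho(U(s,i)) - \rho(U_N(s,i))\bigr] + \bigl[\rho(U_N(s,i)) - \rho_N(U_N(s,i))\bigr],
\]
in which the first bracket is bounded in modulus by $\Lip_\rho |V_N(s,i)|$ and will be folded back into a self-bound on $\mathcal{N}_{\beta,k,T}(V_N)$, while the second bracket $r_N(s,i)$ vanishes on $\{|U_N(s,i)| \le N\}$ and satisfies $|r_N(s,i)| \le 2\Lip_\rho |U_N(s,i)|\,\ind{|U_N(s,i)| \ge N}$ by direct inspection of the definition of $\rho_N$. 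Applying $(a+b+c)^k \le 3^{k-1}(a^k+b^k+c^k)$, summing over $i\in K$, taking expectation, multiplying by $e^{-\beta t}$, and passing to $\sup_{t\in[0,T]}$, the drift and Burkholder--Davis--Gundy estimates from the proof of Theorem \ref{T:ExtUniqSDE1} carry over verbatim to give
\[
\mathcal{N}_{\beta,k,T}(V_N) \le \widetilde{C}_k(\beta)\, \mathcal{N}_{\beta,k,T}(V_N) + D_k(\beta,T)\, E_N(T),
\]
where $\widetilde{C}_k(\beta)$ has exactly the form \eqref{E:CkBeta} (with $\rho$ replaced by its Lipschitz constant, which still dominates $\Lip_{\rho_N}$), and the error term is
\[
E_N(T) := \sup_{s \in [0,T]}\sum_{i \in K}\E\left[|U_N(s,i)|^k\,\ind{|U_N(s,i)| \ge N}\right] \le \frac{1}{N^k}\sup_{s \in [0,T]}\E\left[\Norm{U_N(s,\cdot)}_{\ell^{2k}(K)}^{2k}\right].
\]

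The crux, and the only step that is not a mechanical rerun of the existence argument, is showing that the right-hand side of the last display is bounded in $N$. This follows by applying \eqref{E:lkMom} at level $2k$ to $U_N$ in place of $U$: by \eqref{E:LipNLip} one has $\Lip_{\rho_N} \le \Lip_\rho$ and $\rho_N(0)=0$, so the constant $C$ in \eqref{E:lkMom} can be chosen independently of $N$; the required finiteness $\Norm{u_0}_{\ell^{2k}(K)} < \infty$ holds because $u_0 \in \ell^2(K) \subset \ell^{2k}(K)$ for sequence spaces on counting measure. Choosing $\beta = \beta_\star$ large enough that $\widetilde{C}_k(\beta_\star) \le 1/2$, exactly as in the proof of Theorem \ref{T:ExtUniqSDE1}, and rearranging then yields $\mathcal{N}_{\beta_\star,k,T}(V_N) \le 2 D_k(\beta_\star,T)\,E_N(T)\to 0$ as $N \to \infty$, and hence
\[
\sup_{t\in[0,T]} \E\left[\Norm{V_N(t,\cdot)}_{\ell^k(K)}^k\right] \le e^{\beta_\star T}\,\mathcal{N}_{\beta_\star,k,T}(V_N) \to 0,
\]
which is the second assertion in \eqref{E:MomBDrho1}. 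The main obstacle, as noted, is ensuring the uniform-in-$N$ higher-moment bound; once that is secured, everything else is a direct recycling of Theorem \ref{T:ExtUniqSDE1}.
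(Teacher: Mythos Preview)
Your argument is correct, but it differs from the paper's in two respects.

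First, the paper does not rerun the weighted-norm contraction of Theorem \ref{T:ExtUniqSDE1}; instead it applies It\^o's formula to $V_N^k(t,i)$, sums over $i\in K$ using Young's inequality \eqref{E:Young}, and closes with Gronwall's lemma. Second, the paper uses the alternative splitting
\[
\rho(U)-\rho_N(U_N)=\bigl[\rho_N(U)-\rho_N(U_N)\bigr]+\bigl[\rho(U)-\rho_N(U)\bigr],
\]
so that the residual term is $\rho(U)-\rho_N(U)$, supported on $\{|U|>N\}$ and depending only on the \emph{limiting} process $U$. Its vanishing then follows from dominated convergence using only the $\ell^k$ moment bound \eqref{E:lkMom} for $U$, with no need to pass to $\ell^{2k}$ moments or to invoke a uniform-in-$N$ bound for the approximating family $U_N$. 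Your route, by contrast, places the residual at $U_N$ and controls the tail via the Markov-type estimate $E_N(T)\le N^{-k}\sup_{s}\E\Norm{U_N(s,\cdot)}_{\ell^{2k}}^{2k}$; this costs a higher moment but, as you note, that is harmless here because \eqref{E:lkMom} applies to $U_N$ with constants uniform in $N$ thanks to \eqref{E:LipNLip}. A side benefit of your version is the explicit rate $O(N^{-k})$, which the paper's dominated-convergence argument does not provide.
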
 

\begin{proof}
Let $T\ge t\ge 0$ and fix $k\ge 2$. Without loss of generality, we may assume that $k$ is an even integer. 
Let $V_N(t,i):=U(t,i)-U_N(t,i)$. Then, $V_N(t, i)$ is a solution to
\begin{align*} 
\ud  V_N (t, i)&= \kappa \sum_{j\in K} p_{i, j} V_N(t,j)\ud t -\kappa V_N(t,i)\ud t + \left( \rho_N(U(t,i)) -\rho_N(U_N(t, i)) \right) \ud M_i(t)\\
& \quad + \left( \rho(U(t,i)) -\rho_N(U(t, i)) \right) \ud M_i(t). 
\end{align*}
By It\^o's formula 
\begin{align*} 
\ud  V_N^k(t, i)=& \kappa k  V_N^{k-1}(t, i) \sum_{j\in K} p_{i, j} V_N(t,j)\ud t -\kappa k V_N^k(t,i)\ud t \\
&+ k  V_N^{k-1}(t, i)\left( \rho_N(U(t,i)) -\rho_N(U_N(t, i)) \right) \ud M_i(t)\\
& + k  V_N^{k-1}(t, i)\left( \rho(U(t,i)) -\rho_N(U(t, i)) \right) \ud M_i(t)\\
&+ \frac{k(k-1)}{2}\gamma(0)\,  V_N^{k-2}(t, i)\left( \rho_N(U(t,i)) -\rho_N(U_N(t, i)) \right)^2 \ud t\\
& + \frac{k(k-1)}{2}\gamma(0) \, V_N^{k-2}(t, i)\left( \rho(U(t,i)) -\rho_N(U(t, i)) \right)^2 \ud t. 
\end{align*}
By the following Young's inequality for product
\begin{align}\label{E:Young}
k a b^{k-1}\le a^k+(k-1)b^k,\qquad \text{for all $a,b\ge 0$ and $k\ge 2$,}
\end{align}
we see that 
\begin{align*}
\sum_{i\in K}\left|\kappa k  V_N^{k-1}(t, i) \sum_{j\in K} p_{i, j} V_N(t,j) \right|
&\le \kappa \sum_{i\in K}\sum_{j\in K}p_{i,j}\left(
V_N^k(t,j) + (k-1)V_N^k(t,i)\right)\\
&\le \kappa (k-1+\Lambda) \Norm{V_N(t,\cdot)}_{\ell^k(K)}^k,
\end{align*}
where we have used the assumption \eqref{E:Lambda} and the fact that $\sum_{j\in K}p_{i,j}=1$. 

By \eqref{E:LipNLip},
\begin{align*}
\sum_{i\in K}
V_N^{k-2}(t, i)\left( \rho_N(U(t,i)) -\rho_N(U_N(t, i)) \right)^2 
\le \Lip_\rho^2 \Norm{V_N(t,\cdot)}_{\ell^k(K)}^k.
\end{align*}
 By Young's inequality \eqref{E:Young} with $k/2$, we see that
\begin{align*}
\frac{k}{2}
\left( \rho(U(t,i)) -\rho_N(U(t, i)) \right)^2V_N^{k-2}(t,i)&\le
\left( \rho(U(t,i)) -\rho_N(U(t, i)) \right)^k
+ \frac{k-2}{2}V_N^k(t,i).
\end{align*}
Hence,
\begin{align*}
&\sum_{i\in K}\frac{k(k-1)}{2}\gamma(0) \, V_N^{k-2}(t, i)\left( \rho(U(t,i)) -\rho_N(U(t, i)) \right)^2 \\
\le  &(k-1)\gamma(0) \Norm{\rho(U(t,\cdot)) -\rho_N(U(t, \cdot))}_{\ell^k(K)}^k+ \frac{(k-1)(k-2)}{2}\gamma(0) \Norm{V_N^k(t,\cdot)}_{\ell^k(K)}^k.
\end{align*}

Therefore,
\begin{align*}
\E\left(\Norm{V_N(t,\cdot)}_{\ell^k(K)}^k\right) 
\le & \quad C_1 \int_0^t \E\left(\Norm{V_N(s,\cdot)}_{\ell^k(K)}^k\right) \ud s\\
&+C_2 \int_0^t \E\left( \Norm{\rho(U(s,\cdot)) -\rho_N(U(s, \cdot))}_{\ell^k(K)}^k\right)\ud s,
\end{align*}
where the two constants can be chosen as follows: 
\[
C_1:= 2 \kappa (k-1+\Lambda)+\frac{k(k-1)}{2}\gamma(0)\Lip_\rho^2+ \frac{(k-1)(k-2)}{2}\gamma(0)
\quad\text{and}\quad 
C_2:=(k-1)\gamma(0).
\]
By setting $W_N(t):=\sup_{s\in [0,t]}\E\left(\Norm{V_N(s,\cdot)}_{\ell^k(K)}^k\right)$, we see that
\begin{align}\label{E:WN}
W_N(t) \le C_1 \int_0^t W_N(s)\ud s + 
C_2 \int_0^t \sup_{s'\in [0,s]}\E\left( \Norm{\rho(U(s',\cdot)) -\rho_N(U(s', \cdot))}_{\ell^k(K)}^k\right)\ud s.
\end{align}
Because $|\rho_N(x)|\leq \Lip_\rho |x|$ for all $x\in \R$,  the moment bound \eqref{E:lkMom} implies that 
\begin{align*}
&\sup_{0\leq s\leq t}   \E\left(
\Norm{\rho(U(s,\cdot)) - \rho_N(U(s, \cdot))}_{\ell^k(K)}^k\right) \\
 \leq &\sup_{0\leq s\leq t} \E\left(\sum_{i\in K}\left|\rho(U(s,i)) - \rho_N(U(s, i)) \right|^k \one_{\{|U(s,i)|\ge N\}} \right)\\
\leq &2^{k-1}\Lip_\rho^k \sup_{0\leq s\leq t}   \E\left(\sum_{i\in K} |U(s,i)|^k \mathbbm{1}_{\{|U(s,i)|>N\}} \right)
\rightarrow \: 0,\qquad\text{as $N\rightarrow+\infty$.} 
\end{align*}
On the other hand, the above inequality shows that
\begin{align*}
\sup_{0\leq s\leq t}   \E\left(\sum_{i\in K}\left|\rho(U(s,i)) - \rho_N(U(s, i)) \right|^k \right) 
 \le &2^{k-1}\Lip_\rho^k \sup_{0\leq s\leq t}   \E\left(\Norm{U(s,\cdot)}_{\ell^k(K)}^k \right)\le C_T,
\end{align*}
for all $t\in [0,T]$. Hence, the second term on the right-hand side of \eqref{E:WN} converges to zero as $N\rightarrow\infty$ by the dominated convergence theorem. Moreover, as a function of $t$, this term is in $L^1([0,T])$. Therefore, an application of Gronwall's lemma completes the proof. 
\end{proof}

%\begin{remark}
%Thanks to Theorem \ref{T:rhoBD} and \eqref{E:MomBDrho}, we have  
Thanks to Proposition \ref{P:rhoBD}, we can now assume that $\rho$ is a function with compact support. Let $\phi \in C_c^\infty(\R)$ with $\int_{\R} \phi(x) \ud x =1$ and let $\phi_\epsilon(x)=\epsilon^{-1} \phi(x/\epsilon)$. Define $\rho_\epsilon(x):=\phi_\epsilon * \rho (x)$. Then, it is easy to see that $\rho_\epsilon \in C_c^\infty$ and $\rho_\epsilon$ is globally Lipschitz with the same Lipschitz constant $\Lip_\rho$ as for $\rho$. Consider 

\begin{equation}\label{E:SDE4}
 \left\{
\begin{array}{ll}
\displaystyle  \ud U_\epsilon(t,i)=\kappa \sum_{j\in K} p_{i,j}\left( U_\epsilon(t, j) - U_\epsilon(t,i) \right) \ud t + \rho_\epsilon (u_\epsilon(t,i)) \ud M_i(t), \quad &i\in K, t>0, 
\\[1em]
\displaystyle      
U_\epsilon (0,i) =u_0(i)\,, & i \in K.\,\\
\end{array}
\right.
\end{equation}
The existence and uniqueness of a strong solution in the space \eqref{E:DiscreteSpace} comes from Theorem \ref{T:ExtUniqSDE1}.

\begin{proposition}\label{P:rho2}
Let $U(t,i)$ and $U_\epsilon(t,i)$ be solutions to \eqref{E:SDE1} and \eqref{E:SDE4}, respectively, with the same initial data $u_0(\cdot)\in\ell^2(K)$ and with $\rho$ being a continuous function with compact support. Then, for any $T> 0$ and $k\ge 2$, it holds that
\begin{equation}\label{E:MomBDrho2}
 \sup_{t\in [0,T]}\E\left(\sup_{i\in K}\left|U(t,i)-U_\epsilon(t,i)\right|^k \right) 
 \le 
 \sup_{t\in [0,T]}\E\left(\Norm{U(t,\cdot)-U_\epsilon(t,\cdot)}_{\ell^k(K)}^k \right)\rightarrow 0,
\end{equation}
as $\epsilon\rightarrow0_+$.
\end{proposition}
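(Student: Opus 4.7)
The plan is to mirror the proof of Proposition \ref{P:rhoBD}, replacing the truncation error $\rho-\rho_N$ by the mollification error $\rho-\rho_\epsilon$. Set $V_\epsilon(t,i):=U(t,i)-U_\epsilon(t,i)$; then $V_\epsilon$ solves an SDE whose noise term splits naturally as
\[
\left(\rho_\epsilon(U(t,i))-\rho_\epsilon(U_\epsilon(t,i))\right)\ud M_i(t) + \left(\rho(U(t,i))-\rho_\epsilon(U(t,i))\right)\ud M_i(t).
\]
The first piece is Lipschitz with the same constant $\Lip_\rho$ as $\rho$, while the second piece is a small ``error'' term. Applying It\^o's formula to $V_\epsilon^k(t,i)$ for an arbitrary even integer $k\ge 2$, summing over $i\in K$, and using Young's inequality together with \eqref{E:Lambda} exactly as in the proof of Proposition \ref{P:rhoBD}, I would obtain, with $W_\epsilon(t):=\sup_{s\in[0,t]}\E(\Norm{V_\epsilon(s,\cdot)}_{\ell^k(K)}^k)$,
\[
W_\epsilon(t)\le C_1\int_0^t W_\epsilon(s)\,\ud s + C_2\int_0^t \sup_{s'\in[0,s]}\E\left(\Norm{\rho(U(s',\cdot))-\rho_\epsilon(U(s',\cdot))}_{\ell^k(K)}^k\right)\ud s,
\]
with $C_1,C_2$ of the same form as in Proposition \ref{P:rhoBD}.

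The key new step is to show that the second integrand tends to zero as $\epsilon\to 0_+$, uniformly in $s\in[0,T]$. The inputs are: (i) $\rho$ is continuous with compact support, so $\eta_\epsilon:=\sup_x|\rho(x)-\rho_\epsilon(x)|\to 0$ (in fact $\eta_\epsilon\le C\epsilon$ by a direct computation from the Lipschitz property of $\rho$); (ii) both $\rho$ and $\rho_\epsilon$ are Lipschitz with constant $\Lip_\rho$ and, after a harmless modification of the mollifier if needed to enforce $\rho_\epsilon(0)=0$, satisfy $|\rho(x)-\rho_\epsilon(x)|\le 2\Lip_\rho|x|$; and (iii) the full range of moment estimates \eqref{E:lkMom}, valid for $\Norm{U(t,\cdot)}_{\ell^p(K)}$ for every $p\ge 2$. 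Splitting $\sum_{i\in K}|\rho(U(s,i))-\rho_\epsilon(U(s,i))|^k$ according to whether $|U(s,i)|\ge \delta$ or $<\delta$, applying the uniform bound $\eta_\epsilon^k$ together with Chebyshev's inequality in $\ell^p$ on the former and the Lipschitz bound on the latter, and finally tuning $\delta$ against $\epsilon$, will produce the desired uniform vanishing. An application of Gronwall's lemma then gives $W_\epsilon(T)\to 0$, which yields the second inequality in \eqref{E:MomBDrho2}; the first inequality follows from the trivial pointwise bound \eqref{E:Sup<Lp}.

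The main obstacle I anticipate is the uniform-in-$s$ convergence of the error. Pointwise-in-$s$ convergence is straightforward by applying dominated convergence twice---first on the counting measure on $K$, then on $(\Omega,\bbP)$, with $\Norm{U(s,\cdot)}_{\ell^k(K)}^k$ as the $L^1(\bbP)$-dominating function---but promoting it to uniformity in $s$ requires a quantitative use of the higher-moment bounds in \eqref{E:lkMom}, rather than merely using them as dominating functions. This is the place where the assumption that $\rho$ has compact support (rather than merely being Lipschitz with $\rho(0)=0$), inherited from Proposition \ref{P:rhoBD}, is essential: it produces the strong uniform bound $\eta_\epsilon$ that can be traded off against a large-$p$ Chebyshev tail estimate on $U(s,\cdot)$.
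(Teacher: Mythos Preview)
Your strategy—mirror Proposition~\ref{P:rhoBD} via It\^o's formula, Young's inequality, and Gronwall, with the mollification error $\rho-\rho_\epsilon$ playing the role of the truncation error—is exactly the route the paper takes. The paper's own proof is extremely brief: it records that $\rho_\epsilon\to\rho$ uniformly (since $\rho\in C_c$) and that $\|\rho_\epsilon\|_\infty\le\|\rho\|_\infty$, then simply says ``follow the same arguments as those in Proposition~\ref{P:rhoBD}.'' Your write-up is more explicit about the $\ell^k$-norm of the error, and your remark that one should modify the mollifier so that $\rho_\epsilon(0)=0$ is important: without it the summand $|\rho(U(s,i))-\rho_\epsilon(U(s,i))|^k$ need not be summable over $i\in K$, so the $\ell^k$ error is not even finite.

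Where you are making your life harder than necessary is in aiming for \emph{uniform}-in-$s$ convergence of $\E\bigl(\Norm{\rho(U(s,\cdot))-\rho_\epsilon(U(s,\cdot))}_{\ell^k(K)}^k\bigr)$. Your splitting/Chebyshev argument handles $k\ge 4$ cleanly but, as you anticipate, does not close for $k=2$: on $\{|U|<\delta\}$ you are left with $\sum_{|U(s,i)|<\delta}|U(s,i)|^2$, which has no uniform-in-$s$ bound tending to zero with $\delta$. The way out is to notice that Gronwall does not require uniformity. From
\[
\E\bigl(\Norm{V_\epsilon(t,\cdot)}_{\ell^k}^k\bigr)\le C_1\int_0^t \E\bigl(\Norm{V_\epsilon(s,\cdot)}_{\ell^k}^k\bigr)\,\ud s + C_2\int_0^t \E\bigl(\Norm{\rho(U(s,\cdot))-\rho_\epsilon(U(s,\cdot))}_{\ell^k}^k\bigr)\,\ud s
\]
one obtains $\sup_{t\in[0,T]}\E\bigl(\Norm{V_\epsilon(t,\cdot)}_{\ell^k}^k\bigr)\le e^{C_1T}\,C_2\int_0^T \E\bigl(\Norm{\rho(U(s,\cdot))-\rho_\epsilon(U(s,\cdot))}_{\ell^k}^k\bigr)\,\ud s$, and for the right-hand side the ``dominated convergence twice'' argument you already identified (dominant $C|U(s,i)|^k$ on $K\times\Omega$, using $\rho_\epsilon(0)=0$; then the constant bound $C_T$ from \eqref{E:lkMom} on $[0,T]$) gives the result for every $k\ge 2$ without any splitting.
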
 
\begin{proof}
Since $\rho$ is a continuous function with compact support, we have that $\Norm{\rho_\epsilon}_{L^\infty(\R)}\le \Norm{\rho}_{L^\infty(\R)} <\infty$. 
On the other hand,  since both $\rho$ and $\rho_\epsilon$ are continuous functions with compact support, $\rho_\epsilon$ converges to $\rho$ uniformly on any compact set, i.e., 
\[
\lim_{\epsilon\to 0} \sup_{0\leq s\leq t} \sup_{i\in K} \left|\rho(U(s,i)) -\rho_\epsilon(U(s, i)) \right|^k=0.
\]
Hence, the bounded convergence theorem implies that 
\[
\sup_{0\leq s\leq t} \E\left(\sup_{i\in K} \left|\rho(U(s,i)) -\rho_\epsilon(U(s, i)) \right|^k\right)\le 
\E\left(\sup_{0\leq s\leq t} \sup_{i\in K} \left|\rho(U(s,i)) -\rho_\epsilon(U(s, i)) \right|^k\right) \rightarrow 0,
\]
as $\epsilon\rightarrow 0_+$.
Therefore, one can follow the same arguments as those in proposition \ref{P:rhoBD} to complete the proof.   
\end{proof}

{\bigskip\bf\noindent Case 2.~}
It remains to show the approximation by a finite-dimensional SDE when $K$ has countably infinite many elements. Let $K_i$ be subsets of $K$ with finite cardinalities such that $K_1 \subset K_2\subset \cdots \uparrow K$. Consider  the following finite system of interacting diffusions:

\begin{equation}\label{E:SDE2}
 \left\{
\begin{array}{ll}
\displaystyle  
\begin{aligned}
 \ud U_m(t,i)=&\kappa \sum_{j\in K_m} p_{i, j} \ U_m(t, j) \, \ud t -\kappa U_m(t,i) \, \ud t\\
& \hspace{6em}+ \rho \left(U_m(t,i))\right) \ud M_i(t),
\end{aligned}
\qquad &i\in K_m,\,\, t>0, 
\\[1em]
\displaystyle      
U_m (0,i) =u_0(i)\,, & i \in K_m, \,
\\[1em]
\displaystyle
U_m(t,i)=u_0(i)\,, & i\in K\setminus K_m,\,\, t\geq 0.
\end{array}
\right.
\end{equation}

The existence and uniqueness of a strong solution to \eqref{E:SDE2} is a standard result. Indeed, one may also follow the proof of Theorem \ref{T:ExtUniqSDE1} to show the existence of a unique strong solution.

\begin{proposition}\label{P:Approx1}
Let $U(t,i)$ and $U_m(t,i)$ be solutions to \eqref{E:SDE1} and \eqref{E:SDE2}, respectively, with the same diffusion coefficient $\rho$ which is assumed to be globally Lipschitz continuous. Then, for any $T> 0$ and $k\ge 2$, it holds that
\begin{equation}\label{E:UmU1}
\sup_{t\in [0,T]}\E\left(\sup_{i\in K}\left|U_m(t,i)-U(t,i)\right|^k\right)\le 
\sup_{t\in [0,T]}\E\left(\left\| U_m(t,\cdot) - U(t,\cdot) \right\|^k_{\ell^k(K)}\right) 
\rightarrow 0,
\end{equation}
as $m\rightarrow+\infty$.
\end{proposition}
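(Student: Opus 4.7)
The proof will closely parallel that of Proposition \ref{P:rhoBD}: set $V_m(t,i) := U_m(t,i) - U(t,i)$, assume without loss of generality that $k$ is even, apply Itô's formula to $V_m^k(t,i)$, sum over $i$, and close a Gronwall estimate for $W_m(t) := \sup_{s\leq t}\E \|V_m(s,\cdot)\|_{\ell^k(K)}^k$. The new feature is the truncation to $K_m$, which produces two kinds of boundary terms that must be controlled.

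First, because $U_m(t,i)\equiv u_0(i)$ for $i\in K\setminus K_m$, we split
\[
\|V_m(t,\cdot)\|_{\ell^k(K)}^k \;=\; \sum_{i\in K_m} V_m^k(t,i) \;+\; \sum_{i\in K\setminus K_m}\bigl|u_0(i)-U(t,i)\bigr|^k.
\]
The second sum is handled outside Itô's formula: it is bounded by $2^{k-1}\bigl(\sum_{i\notin K_m}|u_0(i)|^k + \sum_{i\notin K_m}|U(t,i)|^k\bigr)$, whose expectation tends to $0$ uniformly in $t\in[0,T]$ because $u_0\in\ell^2(K)\subseteq\ell^k(K)$ and, by the moment bound \eqref{E:lkMom} together with dominated convergence, $\sup_{t\leq T}\E\sum_{i\notin K_m}|U(t,i)|^k\to 0$.

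For the first sum, writing $\kappa U_m(t,i)=\kappa U_m(t,i)\sum_{j\in K}p_{i,j}$ and regrouping yields, for $i\in K_m$,
\begin{align*}
 dV_m(t,i) \;=\; & \kappa\sum_{j\in K_m} p_{i,j}\bigl[V_m(t,j)-V_m(t,i)\bigr]\,dt \;-\; \kappa V_m(t,i)\Bigl(\sum_{j\notin K_m} p_{i,j}\Bigr)\,dt \\
 & -\; \kappa\sum_{j\notin K_m} p_{i,j}\,U(t,j)\,dt \;+\; \bigl[\rho(U_m(t,i))-\rho(U(t,i))\bigr]\,dM_i(t).
\end{align*}
Applying Itô's formula, the second drift contributes $-\kappa k V_m^k(t,i)\sum_{j\notin K_m}p_{i,j}\leq 0$ and can be discarded. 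The remaining ``interior'' drift and Itô-correction terms are handled exactly as in Proposition \ref{P:rhoBD} and produce a contribution bounded by $C\int_0^t W_m(s)\,ds$. The new ``boundary source'' term $-\kappa k V_m^{k-1}(t,i)\sum_{j\notin K_m}p_{i,j} U(t,j)$ is split by Young's inequality \eqref{E:Young} into $(k-1)V_m^k(t,i) + \kappa^k\bigl(\sum_{j\notin K_m}p_{i,j}U(t,j)\bigr)^{\!k}$; Jensen's inequality combined with assumption \eqref{E:Lambda} then gives
\[
\sum_{i\in K_m}\Bigl(\sum_{j\notin K_m} p_{i,j} U(t,j)\Bigr)^{\!k} \;\leq\; \sum_{i\in K}\sum_{j\notin K_m} p_{i,j}|U(t,j)|^k \;\leq\; \Lambda\sum_{j\notin K_m}|U(t,j)|^k,
\]
a single tail sum whose expectation again vanishes uniformly on $[0,T]$ by \eqref{E:lkMom}.

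Collecting these estimates produces an inequality of the form $W_m(t)\leq C\int_0^t W_m(s)\,ds + \eta_m(T)$ with $\eta_m(T)\to 0$ as $m\to\infty$, and Gronwall's lemma delivers the second inequality in \eqref{E:UmU1}; the first inequality is the trivial bound \eqref{E:Sup<Lp}. The main obstacle, compared with Proposition \ref{P:rhoBD}, is the bookkeeping of the truncation-boundary contributions: the key observation is the Jensen/Hölder collapse above, which converts the doubly-indexed error $(i\in K_m,\,j\notin K_m)$ into a single tail sum $\sum_{j\notin K_m}|U(t,j)|^k$ whose $\ell^k$-integrability is guaranteed by Theorem \ref{T:ExtUniqSDE1}.
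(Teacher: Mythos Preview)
Your proposal is correct and follows essentially the same approach as the paper: Itô's formula on $V_m^k$, Young's inequality for the cross terms, the Jensen/Hölder collapse of the boundary contribution into a single tail sum $\Lambda\sum_{j\notin K_m}|U(t,j)|^k$, and Gronwall. The only cosmetic difference is that you regroup the drift as $\kappa\sum_{j\in K_m}p_{i,j}[V_m(t,j)-V_m(t,i)]$ and isolate the nonpositive term $-\kappa k V_m^k(t,i)\sum_{j\notin K_m}p_{i,j}$ to discard it, whereas the paper keeps $\kappa\sum_{j\in K_m}p_{i,j}V_m(t,j)-\kappa V_m(t,i)$ together; both routes lead to the same Gronwall inequality. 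One small point worth tightening (in both your write-up and the paper's) is the claim that $\sup_{t\le T}\E\sum_{i\notin K_m}|U(t,i)|^k\to 0$: dominated convergence gives this for each fixed $t$, and the passage to the supremum uses monotonicity in $m$ together with the uniform bound \eqref{E:lkMom}.
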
 

\begin{proof}
Let $T\ge t\ge 0$ and fix $k\ge 2$. Without loss of generality, we assume that $k$ is an even integer. Set $V_m(t,i):=U(t,i)-U_m(t,i)$. Then, $V_m(t,i)$ solves the following SDE
\begin{align*}
\ud V_m(t,i) =& 
\begin{cases}
\displaystyle
\begin{aligned}
&\kappa \sum_{j\in K_m} p_{i,j} V_m(t,j) \ud t  -\kappa V_m(t,i)\ud t \\
&\qquad  + \left(\rho(U(t,i))-\rho\left( U_m(t,i)\right) \right) \ud M_i(t)+ \sum_{j\in K\setminus K_m} p_{i,j}U(t,j)\ud t,
\end{aligned} & \text{if $i\in K_m$,} \\[1em]
\displaystyle \ud U(t,i) = \text{the r.h.s. of the first equation in \eqref{E:SDE1},} & \text{otherwise.}
\end{cases} 
\end{align*}
By It\^o's formula, we see that, for any $i\in K_m$,
\begin{align*}
\ud  V_m^k(t,i)=& k\kappa  \sum_{j\in K_m} p_{i,j}  V_m^{k-1}(t,i)V_m(t,j) \ud t - k\kappa  V_m^k(t,i) \ud t \\
&+k V_m^{k-1}(t,i)\left(\rho(U(t,i))-\rho\left( U_m(t,i)\right) \right) \ud M_i(t)\\
&+ k V_m^{k-1}(t,i) \sum_{j\in K\setminus K_m} p_{i,j}U(t,j)\ud t \\
& + \frac{k(k-1)}{2}V_m^{k-2}(t,i)\gamma(0)\left(\rho(U(t,i))-\rho\left( U_m(t,i)\right) \right)^2  \ud t.
\end{align*}
Notice that 
\[
\Norm{V_m(t,\cdot)}_{\ell^k(K)}^k = 
\Norm{V_m(t,\cdot)}_{\ell^k(K_m)}^k + \Norm{V_m(t,\cdot)}_{\ell^k(K\setminus K_m)}^k.
\] 
It is clear that 
\begin{align*}
\Norm{V_m(t,\cdot)}_{\ell^k(K\setminus K_m)}
&= 
\Norm{U(t,\cdot)-u_0(\cdot)}_{\ell^k(K\setminus K_m)}
 \le \Norm{U(t,\cdot)}_{\ell^k(K\setminus K_m)}
+\Norm{u_0(\cdot)}_{\ell^k(K\setminus K_m)}. 
\end{align*}
Theorem \ref{T:ExtUniqSDE1} says that $U(t, \cdot) \in \ell^2(K)\subseteq \ell^k(K)$ a.s. for all $t\geq 0$, which implies that   
\[
\sum_{j\in K\setminus K_m} U(t, j)^k \to 0 \quad\text{ as $m\to \infty$ a.s.} 
\]
Therefore, thanks to \eqref{E:lkMom}, the monotone convergence theorem implies that 
\begin{equation}\label{E:Uk}
\lim_{m\to\infty} \int_0^t 
\sup_{s'\in [0,s]}\E\left(\sum_{j\in K\setminus K_m} U(s', j)^k \right) \ud s=0.
\end{equation} 
In addition, since $u_0(\cdot)\in \ell^2(K)\subseteq \ell^k(K)$, we can get
\begin{align}\label{E_:VmKKm}
\lim_{m\rightarrow\infty}
\int_0^t \sup_{s'\in [0,s]}
\E \left(\Norm{V_m(s',\cdot)}_{\ell^k(K\setminus K_m)}^k \right)
\ud s =0.
\end{align}

As for $\Norm{V_m(t,\cdot)}_{\ell^k(K_m)}^k$,  
by Young's inequality \eqref{E:Young}, we have that
\begin{align*}
k\sum_{i\in  K_m } \sum_{j\in K_m} p_{i,j} 
\left|V_m^{k-1}(t,i)V_m(t,j) \right| 
&\leq \sum_{i\in K} \sum_{j\in K} p_{i, j} V_m^k(t,j)+ (k-1)\sum_{i\in K} \sum_{j\in K} p_{i,j}
V_m^k(t,i)   \\
&\le (k-1+\Lambda) \Norm{V_m(t,\cdot)}_{\ell^k(K)}^k,
\end{align*}
where we have used the assumption \eqref{E:Lambda} and the fact that $\sum_{j\in K}p_{i,j}=1$.
Similarly, by \eqref{E:Young},
\begin{align*}
\sum_{i\in   K_m } \left|k V_m^{k-1}(t,i) \sum_{j\in K\setminus K_m} p_{i, j} U(t,j)\right|& \leq  (k-1)\left\|  V_m(t,\cdot)  \right\|^k_{\ell^k(K)}+ \sum_{i \in K} \left[ \sum_{j\in K\setminus K_m} p_{i, j} U(t,j)\right]^k,
\end{align*}
where, by H\"older inequality and the fact that $\sum_{j\in K\setminus K_m}p_{i,j}\le 1$, 
\begin{align*}
\sum_{i \in K} \left[ \sum_{j\in K\setminus K_m} p_{i, j} U(t,j)\right]^k &\le \sum_{i \in K} \sum_{j\in K\setminus K_m} p_{i, j} U(t,j)^k\leq \Lambda \sum_{j\in K\setminus K_m} U(t, j)^k.
\end{align*}
Now combine things together and use the fact that $\rho$ is globally Lipschitz to see that
\begin{align*}
\E \left(\left\|  V_m(t,\cdot)  \right\|^k_{\ell^k(K)}\right) =&
\E \left( \left\|  V_m(t,\cdot)  \right\|^k_{\ell^k(K_m)}\right) + \E \left(\left\|  V_m(t,\cdot)  \right\|^k_{\ell^k(K\setminus K_m)}\right)\\
\le&  \left( \kappa (k-1+\Lambda)+\frac{k(k-1)}{2}\gamma(0)\Lip_\rho^2\right) \int_0^t  \E \left(\left\|  V_m(s,\cdot)  \right\|^k_{\ell^k(K)}\right) \ud s \\
& +\Lambda  \int_0^t \E \left( \sum_{j\in K\setminus K_m} U(s, j)^k\right) \ud s+\int_0^t 
\E \left(\Norm{V_m(s,\cdot)}_{\ell^k(K\setminus K_m)}^k \right)
\ud s.
\end{align*}
Thanks to \eqref{E:Uk} and \eqref{E_:VmKKm}, an application of Gronwall's lemma to 
\[
W_m(t):=\sup_{s\in[0,t]}\E\left(\Norm{V_m(t,\cdot)}_{\ell^k(K)}^k\right)
\]
proves the proposition. 
\end{proof}
 
%\begin{remark}
%Let $t>0$ and $i\in K$ be fixed. Then, Theorem \ref{T:Approx1} implies that $u^{(m)}(t,i)$ converges to $u(t,i)$ in probability as $m\to \infty$. Thus, \eqref{E:MomBDm} implies that for any $k\geq 2$
%\begin{equation}
% \lim_{m\to \infty} \E \left| u^{(m)}(t,i) - u(t,i)  \right|^k =0.
%\end{equation}
%\end{remark}

\subsection{Comparison theorems for finite interacting diffusions}\label{SS:Finite}
In this subsection, we will prove Theorems \ref{T:FiniteCc} and \ref{T:FiniteCc2}.  
Before the proof, we first make a remark to comment the difference of our results with those in  Cox, Fleischmann and Greven \cite{CFG96}. 

\begin{remark}\label{R:SDEs}
Here is a detailed comparison of our results --- both Theorem \ref{T:MomComSDE1} and Theorem \ref{T:FiniteCc} --- with Theorem 1 of Cox, Fleischmann and Greven \cite{CFG96}.
Let us  first mention that $\mathbf{F}$ and $\mathbf{F}_0$ in \cite{CFG96} correspond to 
$\mathbb{F}[C_b^{2,v}]$ and $\mathbb{F}[C_{b,\pm}^{2,v}]$, respectively, in our paper.
\begin{enumerate}[(a)]
\item For the infinite dimensional case, Theorem \ref{T:MomComSDE1} is able to cover a bigger function cone, namely, $\mathbb{F}[C_{p,+}^{2,v}]$ and $\mathbb{F}[C_p^{2,v}]$ in contrast with $\mathbb{F}[C_{b,+}^{2,v}]$ and $\mathbb{F}[C_b^{2,v}]$, respectively, in \cite{CFG96};
see \eqref{E:Rel-Fs} for relations of these function spaces. 
In particular, the multiple-time comparison result in Theorem \ref{T:MomComSDE1} works well for the moment functions $\mathbb{F}_M$. However, in order to apply the same comparison results in \cite{CFG96} to the moment functions, one needs to  restrict the moment functions to bounded subinterval $I \subset \R_+$, i.e., one needs to replace $\R_+^K$ in the definition \eqref{E:FM} by $I^K$; see Example 6 of \cite{CFG96}. We can make this extension thanks to our stronger approximation results in Section \ref{SS:Aprox2}, namely, Propositions \ref{P:rhoBD}, \ref{P:rho2} and \ref{P:Approx1}.
\item For the case of finite dimensional SDE with $C_c^2(\R_+)$ diffusion coefficient, Theorem \ref{T:FiniteCc} corresponds to Sections 2.1 -- 2.4 of \cite{CFG96}. 
By stating our results in terms of $\mathbb{F}[C^{2,v}]$, $\mathbb{F}[C_+^{2,v}]$ and $\mathbb{F}[C_-^{2,v}]$, our results are slightly more general, even thought this improvement is not essential because each component of the the diffusion process will live in the compact support of $\rho$. The major difference here (and also for infinite dimensional SDE case) is that in \cite{CFG96}, only the case of independent Brownian motions was studied. So we need to change the infinitesimal generator from 
 \begin{align}\label{E:GlInd}
G_\ell= \kappa \sum_{1\leq i, j \leq d} \left(p_{i,j}-\delta_{i,j}\right) x_j D_i +\frac{1}{2}\sum_{1\leq i \leq d} \rho_\ell^2(x_i) D_i^2,\quad \ell=1,2,
\end{align} 
to \eqref{E:Gl} below. This change won't bring any new difficulties. The original proof in \cite{CFG96} works line by line. 
\end{enumerate}
\end{remark}

Although Theorem \ref{T:FiniteCc} can be proved in the same way as those in Sections 2.1--2.4 of \cite{CFG96} with only minor changes as is explained in part (b) of the above remark, considering that Theorem \ref{T:FiniteCc2} is new, we will streamline the proof of both results altogether. This will also serve as an alternative presentation of the proofs in \cite{CFG96}.

\begin{proof}[Proof of Theorems \ref{T:FiniteCc} and \ref{T:FiniteCc2}]
Let the index set $K$ be $\{1,\cdots, d\}$.
Under both Assumptions \ref{A:SDE} and \ref{A:FiniteCc}, we have a finite dimensional SDE with $\rho \in C_c^2(\R_+)$. Hence, it is well-known that there exists a unique strong solution $U(t,\cdot) \in \R^d$.
For $\ell\in \{1,2\}$, let $U_\ell$ be the unique strong solution either corresponding to $\rho_\ell$ in case of Theorem \ref{T:MomComSDE1} or to $\gamma_\ell$ in case of Theorem \ref{T:MomComSDE2}.
In the following, we will slightly abuse the notation for the expectation. We may put subscript to denote the initial data and where there is no subscript, the initial data is $u_0(\cdot)$.

\bigskip

Now we need to prove the following two statements:
\begin{enumerate}
 \item For any integer $m\ge 1$,  $0<t_1<\cdots <t_m<\infty$, and 
 \[
 \text{either}\quad 
 F_1,\cdots F_m\in\mathbb{F}^{\{1,\cdots,d\}}[C_-^{2,v}]
 \quad\text{or}\quad 
 F_1,\cdots F_m\in\mathbb{F}^{\{1,\cdots,d\}}[C_+^{2,v}],
 \]
 it holds that
\begin{align}\label{E:MomComfin1}
 \E\left[\prod_{\ell=1}^m F_\ell\left( U_1 (t_\ell, \cdot)\right)\right]
 \ge 
 \E\left[\prod_{\ell=1}^m  F_\ell\left( U_2 (t_\ell, \cdot)\right)  \right].
\end{align}  
\item If $F$ is only in $\mathbb{F}^{\{1,\cdots,d\}}[C^{2,v}]$, then for any $t\geq 0$, 
\begin{align}\label{E:MomComfin11}
 \E\left[ F(U_1(t, \cdot))\right] \geq  \E\left[F(U_2(t, \cdot))\right].
 \end{align}
\end{enumerate}

{\noindent\bf Step 1.~}
We start by proving \eqref{E:MomComfin11} for  $F\in\mathbb{F}^{\{1,\cdots,d\}}[C^{2,v}]$, 
which will cover both \eqref{E:MomComfin11} and 
\eqref{E:MomComfin1} when all $t_\ell$ are the same.

For $\ell\in \{1,2\}$, let $G_\ell$ be the infinitesimal generator for $u_\ell(t,\cdot) \in \R^d$, that is, 
\begin{align}\label{E:Gl}
G_\ell= \kappa &\sum_{1\leq i, j \leq d} \left(p_{i,j}-\delta_{i,j}\right) x_j D_i+
\begin{cases}
\displaystyle
\frac{1}{2}\sum_{1\leq i, j \leq d} \rho_\ell(x_i)\rho_\ell(x_j) \gamma(i-j) D_iD_j,& \text{Theorem \ref{T:FiniteCc}},\\[2em]
\displaystyle
\frac{1}{2}\sum_{1\leq i, j \leq d} \rho(x_i)\rho(x_j) \gamma_\ell(i-j) D_iD_j,& \text{Theorem \ref{T:FiniteCc2}}.
\end{cases}
\end{align} 
Let $T_t^{(\ell)}$ be the corresponding semigroup, namely, 
\begin{align}\label{E:TF=EFU}
T_t^{(\ell)} F(x):=\E_{x}\left[ F\left(U_\ell(t,\cdot)\right)\right]\quad \text{for all $x\in \R^d$.}
\end{align}
Then, \eqref{E:MomComfin11} is equivalent to showing 
\begin{equation}\label{E:ComSG}
T_t^{(1)} F(x) \geq T_t^{(2)} F(x) \quad \text{for all $x\in \R^d$}.
\end{equation}
By the integration by parts formula, 
\[ T_t^{(1)} - T_t^{(2)} = \int_0^t T_s^{(1)}\left[ G^{(1)} - G^{(2)} \right] T^{(2)}_{t-s} \,\, \ud s,\]
where 
\begin{equation*}
  G^{(1)} - G^{(2)}=
 \begin{cases}
 \displaystyle
 \frac{1}{2}\sum_{1\leq i, j \leq d} \left[\rho_1(x_i)\rho_1(x_j)-\rho_2(x_i)\rho_2(x_j)\right] \gamma(i-j) D_iD_j,
 & \text{Theorem \ref{T:FiniteCc}},\\[2em] 
 \displaystyle
 \frac{1}{2}\sum_{1\leq i, j \leq d} \rho(x_i)\rho(x_j)\left[\gamma_1(i-j) - \gamma_2(i-j) \right]   D_iD_j,
 & \text{Theorem \ref{T:FiniteCc2}}. 
 \end{cases}
\end{equation*}
It is clear that $T^{(\ell)}_t$ preserves positivity, i.e., $T^{(\ell)}_t g \geq 0$ whenever $g\geq 0$.
It is also known (see, e.g., Theorem 5.6.1 in \cite{F75}   that under our assumption on $\rho_\ell$ or $\rho$,  
\begin{align}\label{E:PreserveC2}
T^{(\ell)}_t F \in C^{2}(\R^d) 
\end{align}
and $T^{(\ell)}_t F$ is continuous in $t$.
Our assumptions on $\rho$'s and $\gamma$'s assure that for all $i,j\in\{1,\cdots,d\}$ and all $x_i,x_j\in\R_+$, 
\[
\begin{cases}
\displaystyle
\rho_1(x_i)\rho_1(x_j)-\rho_2(x_i)\rho_2(x_j)\ge 0,  &  \text{Theorem \ref{T:FiniteCc}},\\[1em]
\rho(x_i)\rho(x_j)\left[\gamma_1(i-j) - \gamma_2(i-j)\right]\ge 0, &\text{Theorem \ref{T:FiniteCc2}}.
\end{cases}
\]
Hence, we only need to show that $D_iD_j T_t^{(2)} F (z)\geq 0$ for all $1\leq i, j\leq d$, $t> 0$ and $z\in\R_+^d$. For simplicity, we define $U(t, \cdot):=U^{(2)}(t, \cdot)$, $\rho:=\rho_2$, $G:=G^{(2)}$ and $T_t:=T^{(2)}_t$,  and show that  
\begin{equation}\label{E:DT}
D_iD_j T_t F(z) \geq 0 \quad \text{for all $1\leq i, j\leq d$,  $t> 0$ and $z\in\R_+^d$}.
\end{equation} 

{\bigskip\bf\noindent Step 2.~}
In this step, we will use {\it Trotter's product formula} (see, e.g., Corollary 1.6.7 of \cite{EK86}) to prove \eqref{E:DT}. 
Let $T^{(\kappa,\rho)}_t$ denote this semigroup of the $d$-dimensional diffusion process in \eqref{E:SDE1} with drift parameter $\kappa$ and diffusion coefficient $\rho$.
Trotter's product formula suggests to study the limit of the semigroup $\left[ T_{t/k}^{(\kappa,0)} T_{t/k}^{(0,\rho)}\right]^k$ as $k\rightarrow\infty$.

We first study the semigroup $T^{(0,\rho)}_t$, i.e., the case where $\kappa=0$. In this case, \eqref{E:SDE1} becomes 
\begin{equation} \label{E:diffSDE}
\begin{cases}
 dU(t, i)=\rho\left(U(t, i) \right) d M_i(t), & t>0,\: i=1,\cdots, d,\\
 U(0,i) = u_0(i), & i=1,\cdots, d.
\end{cases}
\end{equation}
Although $U(t, i)$ and $U(t, j)$ are not independent, they interact only through the random environment $M_i(t)$ when $i\neq j$.  Hence, in \eqref{E:diffSDE} each component $U(t, i)$ of $(U(t, 1), \dots, U(t, d))$ has its own equation.
Following Cox {\it et al} \cite{CFG96}, for $i,j\in \{1,\cdots,d\}$, and $h_1, h_2>0$, denote 
\[
\begin{array}{lll}
    u^2 =z+h_2e_j && u^{12} = z+ h_1e_i +h_2e_j  \\
    u^0 =z && u^{1} = z+ h_1e_i,
\end{array}
\]
where $e_i$ is the $i$th unit vector in $\R^d$  and $z\in\R^d_+$.
To avoid triviality, we assume that $z\in \sprt(\rho)^d$.
When $i\ne j$, $(u^0,u^1,u^2,u^{12})$ forms a rectangle in the $(i,j)$-th directions; when $i=j$, it forms nondecreasing sequence in the $i$-th direction: $u^0\le u^1\wedge u^2\le u^1\vee u^2 \le u^{12}$ (here, the inequality  $u\geq v$ for $u, v \in \R^d$ means that each component $u_i \leq v_i$ for all $1\leq i\leq d$).
Let $U^0, U^1, U^2, U^{12}$ be the solutions to \eqref{E:diffSDE} with the initial condition $u^0, u^1, u^2, u^{12}$, respectively, when $i\ne j$ and with $u^0, u^1\wedge u^2, u^1\vee u^2, u^{12}$, respectively, when $i=j$.
By the classical comparison principle for the one-dimensional SDEs (see e.g., either \cite[Theorem VI.1.1]{IkedaWatanabe89} or \cite[Theorem IX.3.7]{RevuzYor99}), we have that with probability one, for all $t\ge 0$, 
\begin{align}\tag{in case of $i\ne j$}
&\begin{array}{ccc}
 U^2(t) &\le &U^{12}(t)\\
\mathbin{\rotatebox[origin=c]{90}{$\le$}}&& \mathbin{\rotatebox[origin=c]{90}{$\le$}}\\
U^0(t) &\le & U^1(t) 
\end{array}
\\[0.5em]
\tag{in case of $i=j$}
U^0(t) & \le U^1(t)  \le U^2(t)\le U^{12}(t).
\end{align}

Now for $F\in\mathbb{F}[C^{2,v}]$ we have that $D_iD_j F\ge 0$ for all $1\le i,j\le d$. By noticing that
\begin{align}\label{E:CalcDD}
D_iD_j f\ge 0 \quad \Longleftrightarrow \quad
f(u^{12}) - f(u^2)-f(u^1)+f(u^0)\ge 0 \quad \forall h_1, h_2>0,
\end{align}
we see that 
\begin{align}\label{E_:FFFF>0}
\left[ F\left(U^{12}(t)\right) - F\left((U^2(t)\right)\right] -\left[ F\left(U^{1}(t)\right) - F\left((U^0(t)\right)\right] \geq 0.
\end{align}
Notice that the expectation of the left-hand side of \eqref{E_:FFFF>0} is finite because $\rho$ has compact support and
\[
\max_{i=1,\cdots,d} U(t,i) \in \sprt(\rho),\quad a.s.
\]
Hence, we can take expectation on both sides of \eqref{E_:FFFF>0} to see that
\[
\left[T^{(0,\rho)}_t F(u^{12}) - T^{(0,\rho)}_t F(u^{1})
\right]
- \left[ T^{(0,\rho)}_t F(u^{2}) - T^{(0,\rho)}_t F(u^{0})\right] \ge 0
\]
which, in view of \eqref{E:CalcDD}, is nothing but \eqref{E:DT} for $T^{(0,\rho)}_t$. Therefore, we have proved \eqref{E:DT} for the case of $F\in \mathbb{F}[C^{2,v}]$ and no drift ($\kappa=0$).
In other words, $T^{(0,\rho)}_t$ preserves the function cone $\mathbb{F}[C^{2,v}]$.

Next, we study the semigroup $T^{(\kappa,0)}_t$, i.e., the case when $\kappa> 0$ but $\rho\equiv 0$ in \eqref{E:SDE1}.
In this case, the system is deterministic:
\begin{equation} 
\begin{cases}
 dU(t, i)=\kappa \sum_{j=1}^d (p_{i,j}-\delta_{i,j}) U(t,j) \ud t, & t>0,\: i=1,\cdots, d,\\
 U(0,i) = u_0(i), & i=1,\cdots, d.
\end{cases}
\end{equation}
If we view $U(t,\cdot)$ and the initial data $u_0(\cdot)$ as column vectors in $\R^d$ and set $A= (p_{i,j}-\delta_{i,j})_{1\le i,j\le d}$, then we have that $U(t,\cdot) = \exp(\kappa A t)  u_0(\cdot)$.
Hence, for $F\in\mathbb{F}[C^{2,v}]$ and $z\in\R_+^d$ (viewed as a column vector), 
\[
T^{(\kappa,0)}_tF(z) 
= F\left(\exp(\kappa A t) z \right)
\]
and hence,
\[
D_iD_j T^{(\kappa,0)}_t F(z)
=   \sum_{1\leq k, m \leq d}   F_{k,m}\left(\exp(\kappa A t) z \right) 
\left(\exp(\kappa A t)\right)_{k,i}\left(\exp(\kappa A t)\right)_{m,j}\ge 0,
\]
with $F_{k,m}(z) = D_kD_m F(z)$, which proves \eqref{E:DT} for this case. Therefore, $T^{(\kappa,0)}_t$ also preserves the function cone $\mathbb{F}[C^{2,v}]$.

Now we can apply Trotter's product formula with $C^2(\R_+^d)$ as the core to see that 
\[
\lim_{k\rightarrow\infty}
\left[ T_{t/k}^{(\kappa,0)} T_{t/k}^{(0,\rho)}\right]^k F
= T^{(\kappa,\rho)}_t F ,\qquad \forall F\in\mathbb{F}[C^{2,v}].
\]
By \eqref{E:PreserveC2}, the $C^2$-property is preserved by this semigroup $T^{(\kappa,\rho)}_t$. The nonnegativity of \eqref{E:DT} is also preserved through the limit. 
Therefore,  $T^{(\kappa,\rho)}_t$ preserves the function cone $\mathbb{F}[C^{2,v}]$.
This proves \eqref{E:MomComfin11} for $F\in\mathbb{F}[C^{2,v}]$
and \eqref{E:MomComfin1} when all $t_\ell$ are the same.

{\bf\bigskip\noindent Step 3.~~}
Notice that functions in $\mathbb{F}[C^{2,v}]$ is not closed under multiplication; see Example \ref{Ex:NotMult} below for one example. 
In order to work with multiple-time comparison which requires multiplication of these functions, we have to restrict to a smaller cone of convex functions. Since we already show in the previous step that  both function cones $\mathbb{F}[C^{2,v}_+]$ and $\mathbb{F}[C^{2,v}_-]$ are closed under the semigroup $T^{(\kappa,\rho)}_t$, we need only to show the preservation under multiplication and it is easy to see that both $\mathbb{F}[C^{2,v}_+]$ and $\mathbb{F}[C^{2,v}_-]$ are closed under multiplication. Indeed, if $F, G \in \mathbb{F}[C^{2,v}_+]$, then
\begin{align}\label{E_:DD>0}
D_iD_j (F G) = 
G (D_iD_j F)  + F (D_iD_j G) + (D_i F) (D_j G) + (D_j F) (D_i G) \ge 0 
\end{align}
because all terms $F$, $G$, $D_i F$, $D_j F$, $D_i G$, $D_j G$, $D_iD_j F$ and $D_iD_j G$ are nonnegative, the monotonicity is clearly preserved under product. Hence, $FG\in \mathbb{F}[C^{2,v}_+]$. The case for $\mathbb{F}[C^{2,v}_-]$ can be proved in the same way.

As a consequence, we claim that 
for any $F_1,\cdots,F_m\in\mathbb{F}[C^{2,v}_+]$ (resp. $\mathbb{F}[C^{2,v}_-]$) and $0\le t_1<\cdots <t_m$,  the function 
\begin{align}\label{E_:Step3}
z\mapsto \E_z\left[F_1(U(t_1))\cdots F_m(U(t_m)) \right]
\end{align}
belongs to $\mathbb{F}[C^{2,v}_+]$ (resp. $\mathbb{F}[C^{2,v}_-]$).
Indeed, the case $m=1$ has been proved in the previous step. Assume that this is true for $m-1$. Now by the strong Markov property, we see that 
\begin{align*}
\E_z\left[F_1(U(t_1))\cdots F_m(U(t_m)) \right]
&=
\E_z\left[F_1(U(t_1))\E_{U(t_1)}\left[F_2(U_{t_2-t_1})\cdots F_m(U(t_m-t_1)) \right]\right]\\
&= T^{(\kappa,\rho)}_{t_1}\left[
F_1 \: G \right](z)
\end{align*}
where $G(z) = \E_z\left[F_2(U_{t_2-t_1})\cdots F_m(U(t_m-t_1)) \right]$. By induction assumption, $G\in \mathbb{F}[C^{2,v}_+]$ (resp. $\mathbb{F}[C^{2,v}_-]$) Since $\mathbb{F}[C^{2,v}_+]$ (resp. $\mathbb{F}[C^{2,v}_-]$) is closed under multiplication, $F_1 G\in \mathbb{F}[C^{2,v}_+]$ (resp. $\mathbb{F}[C^{2,v}_-]$). This proves the claim in \eqref{E_:Step3}.

{\bf\bigskip\noindent Step 4.~~}
Now we will prove \eqref{E:MomComfin1} with $m\ge 2$. We need only to show one case, say $\mathbb{F}[C_{+}^{2,v}]$. 
The case $m=1$ has been proved in Step 2. Suppose that \eqref{E:MomComfin1} is true for $m-1$. For $m$, by the strong Markov property, for $\ell\in \{1,2\}$, we see that
\[
\E\left[F_1(U_\ell(t_1,\cdot) \cdots F_m(U_\ell(t_m,\cdot)) )\right]
= \E\left[F_1(U_\ell(t_1,\cdot) G_\ell(U_\ell(t_1,\cdot))\right]
\]
where 
\[
G_\ell (z) = \E_z\left[F_2(U_\ell(t_2-t_1,\cdot)\cdots F_m(U_\ell(t_m-t_1,\cdot))) \right].
\]
By the induction assumption, $G_1 (z)\ge G_2(z)$ for any $z\in\R_+^d$. Hence,
\begin{align*}
\E\left[F_1(U_1(t_1,\cdot) \cdots F_m(U_1(t_m,\cdot)) )\right]
=&  \E\left[F_1(U_1(t_1,\cdot) G_1(U_1(t_1,\cdot))\right] \\
\ge &  \E\left[F_1(U_1(t_1,\cdot) G_2(U_1(t_1,\cdot))\right] \\
\ge & \E\left[F_1(U_2(t_1,\cdot) G_2(U_2(t_1,\cdot))\right]\\
=&
\E\left[F_1(U_2(t_1,\cdot) \cdots F_m(U_2(t_m,\cdot)) )\right]
\end{align*}
Here we have used the fact that $F_1 G_2\in\mathbb{F}[C_{+}^{2,v}]$ (see Step 3). This proves \eqref{E:MomComfin1}.
\end{proof}

\subsection{Proof of Theorems \ref{T:MomComSDE1},  \ref{T:MomComSDE2} and Corollary \ref{C:Slepian}}
Now we are ready to prove Theorems \ref{T:MomComSDE1} and \ref{T:MomComSDE2}. \label{SS:4.4}

\begin{proof}[Proof of Theorems \ref{T:MomComSDE1} and \ref{T:MomComSDE2}]
Propositions \ref{P:rhoBD}, \ref{P:rho2} and \ref{P:Approx1} imply that  the infinite dimensional system of diffusions \eqref{E:SDE1} can be approximated by finite dimensional systems of diffusions with $C_c^2(\R_+)$-diffusion coefficient.
Then by passing to the limit in the above approximations, 
the stochastic comparison statements can be extended to those in Theorems \ref{T:MomComSDE1} and \ref{T:MomComSDE2}. 
When we pass to the limit, some care is needed. 
Indeed, by Propositions \ref{P:rhoBD}, \ref{P:rho2} and \ref{P:Approx1}, we can find $U_\epsilon(t,i)$ such that
\begin{align}\label{E:UeU}
\lim_{\epsilon\downarrow 0_+} 
\sup_{t\in[0,T]} \E\left[\Norm{U_\epsilon(t,\cdot)-U(t,\cdot)}_{\ell^p(K)}^p\right] = 0,\quad\forall p\in\bbN,\: T>0,
\end{align}
where $U(t,\cdot)$ is the unique solution to \eqref{E:SDE1} and $U_\epsilon (t,\cdot)$ with $\epsilon$ fixed solves a finite-dimensional SDE with $C_c^2(\R_+)$-diffusion coefficient.

{\bigskip\bf\noindent Case I.~} We first consider the one-time comparison results over $F\in\mathbb{F}[C_p^{2,v}]$.
Because $F\in\mathbb{F}[C_p^{2,v}]$, one can find $m\in\bbN\setminus\{0\}$ and distinct $i_1,\cdots,i_m\in K$ such that,
by the mean-value theorem, we have that
\[
\left|F(U(t,\cdot)) - F(U_\epsilon(t,\cdot))\right|
\le \left|\bigtriangledown F \left(\xi\right)\right| \sum_{\ell=1}^m \left|U(t,i_\ell)-U_\epsilon(t,i_\ell)\right|,\quad 
\xi:=(1-c)U(t,\cdot) + cU_\epsilon(t,\cdot),
\]
where $c\in [0,1]$. 
For any $\beta\ge 1$, by Cauchy-Schwartz inequality, we see that
\begin{align*}
\Norm{F(U(t,\cdot)) - F(U_\epsilon(t,\cdot))}_\beta
\le & \big\|\: 
|\bigtriangledown F(\xi)|\: \big\|_{2\beta} \Norm{\sum_{\ell=1}^m \left|U(t,i_\ell)-U_\epsilon(t,i_\ell)\right| }_{2\beta}.
\end{align*}
By the growth condition \eqref{E:GrowthCp}, there are some constants $C>0$ and $k\in\bbN$ such that
\begin{align*}
 \left|\bigtriangledown F \left(\xi\right)\right|^{2\beta} \le &
%  C_\beta \left(1+ \sum_{\ell=1}^m \left|(1-c)U(t,i_\ell) + cU_\epsilon(t,i_\ell) \right|^{2k\beta}\right)\\
%  \le &
 C_\beta\left(1+ \Norm{(1-c)U(t,\cdot) + cU_\epsilon(t,\cdot)}_{\ell^{2k\beta}(K)}^{2k\beta}\right)\\
 \le& C_{\beta,k} \left(1+\Norm{U(t,\cdot)}_{\ell^{2k\beta}(K)}^{2k\beta}+ \Norm{U_\epsilon(t,\cdot)}_{\ell^{2k\beta}(K)}^{2k\beta}\right).
\end{align*}
Hence,
\[
\big\|\: 
|\bigtriangledown F(\xi)|\: \big\|_{2\beta}
\le C \left(1+ \E\left[\Norm{U(t,\cdot)}_{\ell^{2k\beta}(K)}^{2k\beta}\right]+ \E\left[\Norm{U_\epsilon(t,\cdot)}_{\ell^{2k\beta}(K)}^{2k\beta}\right] \right)^{1/(2\beta)}
\]
Thanks to \eqref{E:UeU}, 
\[
\sup_{ \epsilon\in(0,1)} \big\|\: 
|\bigtriangledown F(\xi)|\: \big\|_{2\beta}<+\infty.
\]
On the other hand, 
\[
\Norm{\sum_{\ell=1}^m\left|U(t,i_\ell)-U_\epsilon(t,i_\ell)\right| }_{2\beta} 
\le C_\beta\E\left[\Norm{U(t,\cdot)-U_\epsilon(t,\cdot)}_{\ell^{2\beta}(K)}^{2\beta}\right]^{1/(2\beta)}
\rightarrow 0,\quad \text{as $\epsilon\downarrow 0_+.$}
\]
Hence, $F(U_\epsilon(t,\cdot))$ converges to $F(U(t,\cdot))$ in $L^{\beta}(\Omega)$ for all $\beta\ge 1$. The comparison results will be carried thought the limit.

{\bigskip\bf\noindent Case II.~} Now we consider the $m$-time comparison results with $m\ge 2$. By telescoping and the H\"older's inequality, for any $\beta\ge 1$,
\begin{align*}
&\Norm{
\prod_{\ell=1}^m F_\ell(U(t_\ell,\cdot))
-
\prod_{\ell=1}^m F_\ell(U_\epsilon(t_\ell,\cdot))
}_\beta\\
&= \Norm{
\sum_{k=1}^m
\left[F_k(U_\epsilon(t_k,\cdot))-F_k(U_\epsilon(t_k,\cdot))\right]
\left(\prod_{\ell=1}^{k-1} F_\ell(U(t_\ell,\cdot))\right)
\left(\prod_{\ell=k+1}^m F_\ell(U_\epsilon(t_\ell,\cdot))\right)
}_\beta\\
& \le 
\sum_{k=1}^m \Norm{F_k(U_\epsilon(t_k,\cdot))-F_k(U_\epsilon(t_k,\cdot))}_{\beta m}
\left(\prod_{\ell=1}^{k-1} \Norm{F_\ell(U(t_\ell,\cdot))}_{\beta m}\right)
\left(\prod_{\ell=k+1}^m  \Norm{F_\ell(U_\epsilon(t_\ell,\cdot))}_{\beta m}\right)
\end{align*}
where we use the convention that product over an empty set gives one. 
If $F_1,\cdots,F_m\in\mathbb{F}[C_{b,-}^{2,v}]$, then
$F_\ell$ are bounded and globally Lipschitz continuous. Hence, the right-hand side of the above inequality goes to zero.  
On the other hand, if $F_1,\cdots,F_m\in\mathbb{F}[C_{p,+}^{2,v}]$, then by Case I we see that 
\[
\sup_{\epsilon
\in (0,1)}\Norm{F_\ell (U_\epsilon(t_\ell,\cdot))}_{\beta m}
\le
\sup_{\epsilon
\in (0,1)} \Norm{F_\ell (U_\epsilon(t_\ell,\cdot))-F_\ell (U(t_\ell,\cdot))}_{\beta m} + \Norm{F_\ell (U(t_\ell,\cdot))}_{\beta m}<\infty.
\]
Hence, 
\[
\Norm{
\prod_{\ell=1}^m F_\ell(U(t_\ell,\cdot))
-
\prod_{\ell=1}^m F_\ell(U_\epsilon(t_\ell,\cdot))
}_\beta\le C \sum_{k=1}^m \Norm{F_k(U_\epsilon(t_k,\cdot))-F_k(U_\epsilon(t_k,\cdot))}_{\beta m},
\]
which goes to zero as another application of Case I. Therefore, $\prod_{\ell=1}^m F_\ell(U_\epsilon(t_\ell,\cdot))$ converges to $\prod_{\ell=1}^m F_\ell(U(t_\ell,\cdot))$ in $L^{\beta}(\Omega)$ for all $\beta\ge 1$.
This completes the proof of both Theorem \ref{T:MomComSDE1} and \ref{T:MomComSDE2}.
\end{proof}

We now prove Corollary \ref{C:Slepian}.

\begin{proof}[Proof of Corollary \ref{C:Slepian}] 
We first consider the case under Assumption \ref{A:FiniteCc} (i.e.,   finite dimensional SDEs and $K:=\{1, \dots, d\}$).  Let 
\[ F(z_1, \dots, z_d) :=\prod_{k=1}^d \mathbf{1}_{(-\infty, a_k]}(z_k) \quad \text{and}\quad F_\epsilon (z_1, \dots, z_d):=\prod_{k=1}^d \phi_{\epsilon, k} (z_k),  \]
where $\phi_{\epsilon, k}(z_k) \in C^2(\R)$ are non-increasing and non-negative functions such that $\phi_{\epsilon, k}(z_k)$ converges to $\mathbf{1}_{(-\infty, a_k]}(z_k)$ as $\epsilon$ goes to 0 for each $z_k \in \R$. It is easy to see that $F_\epsilon$ is uniformly bounded by some constant, in $C^2(\R^d)$ and  $D_iD_j F_\epsilon \geq 0$ for $i\neq j$. On the other hand, the assumption that $\gamma_1(0)=\gamma_2(0)$ enables us to get 
\[ G^{(1)}-G^{(2)} = \frac{1}{2}\sum_{1\leq i\neq  j \leq d} \rho(x_i)\rho(x_j)\left[\gamma_1(i-j) - \gamma_2(i-j) \right]   D_iD_j,\]
where $G^{(i)}$ is the infinitesimal generator of $U^{(i)}$ as in the proof of Theorem \ref{T:FiniteCc}. Hence, following the proof of Theorem  \ref{T:FiniteCc}, we get 
\[ \E F_\epsilon\left(U_1(t, 1), \dots, U_1(t, d) \right) \geq  \E F_\epsilon\left(U_2(t, 1), \dots, U_2(t, d)\right).\] 
Therefore, thanks to the bounded convergence theorem, as $\epsilon$ goes to 0, we get 
\[  \E F (U_1(t, x_1), \dots, U_1(t, x_d) \geq  \E F (U_2(t, x_1), \dots, U_2(t, x_d),\]
which shows \eqref{E:Slepian1} under Assumption \ref{A:FiniteCc}. Under the  assumption in Theorem \ref{T:MomComSDE2} with $\gamma_1(0)=\gamma_2(0)$, the approximation results from Propositions \ref{P:rhoBD}, \ref{P:rho2} and \ref{P:Approx1} complete the proof of  \eqref{E:Slepian1}.

\end{proof}

\section{Some examples and one application}
\label{S:ExApp}

In all examples below, we always work either under the settings of Theorem \ref{T:MomComp} or under those of Theorem \ref{T2:MomComp}, and use $u_\ell(t,x)$, $\ell=1,2$, to denote corresponding solutions to \eqref{E:SHE}.

\begin{example}\label{Ex:NotMult}
For $n\in\bbN\setminus\{0\}$ and $c>0$, let $g_1(x)=(x-c)^{2n}$ and $g_2(x)=x^2$.
It is clear that $g_1 \in C_p^{2,v}(\R_+;\R_+)\setminus C_{p,\pm}^{2,v}(\R_+;\R_+)$ and $g_2\in C_{p,+}^{2,v}(\R_+;\R_+)$.
For any $t>0$ and $x_0\in\R^d$, denote $F_\ell(u(t,\cdot)) := g_\ell(u(t,x_0))$, $\ell=1,2$.
In this case, $F_1, F_2\in \mathbb{F}[C_p^{2,v}]$ but $F_1  F_2 \not\in \mathbb{F}[C_p^{2,v}]$ because 
\[
\frac{1}{2}\frac{\ud^2}{\ud x^2} (x-c)^{2n} x^2=
(x-c)^{2(n-1)}\left[( n+1) (2 n+1) x^2-2c
   (2 n+1) x+c^2\right]
\]
which is negative for some $x>0$ since the quadratic form has two positive solutions by noticing that $\Delta=4c^2n(1+2n)>0$. Nevertheless, since $F_1\in \mathbb{F}[C_p^{2,v}]$, we can make the one-time comparison statement 
\[
\E\left(\left[u_1(t,x_0)-c\right]^{2n}\right)
\ge 
\E\left(\left[u_2(t,x_0)-c\right]^{2n}\right).
\]
This proves \eqref{E:C5}.
\end{example}

\begin{example}(Examples in $\mathbb{F}[C_{b,-}^{2,v}]$)
Let $g_1(x)=\frac{1}{(1+x)^{c}}$ with $c\ge 1$, $g_2(x)=\log\frac{x+a}{x+b}$ with $a>b>0$, and $g_3(x)=e^{-\lambda x}$ with $\lambda>0$. 
It is easy to see that for $\ell=1,2,3$, 
\[
g_i(x)>0, \quad g_i'(x)<0,\quad\text{and}\quad g_i''(x)>0,\quad\forall x\in\R_+.
\]
Hence, we have 
\[
g_1,g_2,g_3 \in C_{b,-}^{2,v}(\R_+;\R_+).
\]
Therefore, one can apply either Theorem \ref{T:MomComp} or Theorem \ref{T2:MomComp} using these functions to obtain the multiple-time comparison result in \eqref{E:CompCoordinate}.
\end{example}

\begin{example}
For any $a,b,d\ge 1$ and $c\ge e$, denote $g_1(x):=x^b[\log(c+ x)]^a$ and $g_2(x)=x^d$. 
We claim that 
\[
g_1,g_2\in C_{p,+}^{2,v}(\R_+;\R_+).
\]
It is trivial for the $g_2$ case. As for the $g_1$ case, it is clear that $g_1(x)$ is nonnegative and strictly increasing for $x\ge 0$. We also claim that $g_1''(x)\ge 0$ for $x\ge 0$. Indeed, for any $x\ge 0$, 
\begin{align}\notag
g_1''(x)\ge 0 & \Longleftrightarrow
(a-1) a x^2+a x ((2 b-1) x+2bc) \log (c+x)+(b-1) b
   (c+x)^2 \log^2(c+x)
   \ge 0\\ \notag
   &\Longleftarrow
   a x ((2 b-1) x+2bc) +(b-1) b
   (c+x)^2 \ge 0 \\
   &\Longleftrightarrow 
   (a (2 b-1)+(b-1) b)x^2 +  2b c( a + b-1 )x+(b-1) b
   c^2 \ge 0,
   \label{E:g''}
\end{align}
where in the second step we have used the fact that $\log(c+x)\ge 1$ and $a,b\ge 1$.
Now we need  the following conditions:
\[
\begin{cases}
 (a (2 b-1)+(b-1) b) \ge 0\\
  b( a + b-1 ) \ge 0\\
  (b-1) b\ge 0
\end{cases}
\]
in order to make \eqref{E:g''} true for all $x\ge 0$. Clearly, these conditions are satisfied for $a,b\ge 1$.
On the other hand,
\[
0\le g_1'(x) = b x^{b-1} \log ^a(c+x)+\frac{a
   x^b \log ^{a-1}(c+x)}{c+x}\le (a+b)x^b.
\]
Hence, we have proved that $g_1\in C_{p,+}^{2,v}(\R_+;\R_+)$.
Therefore, we can apply Theorems \ref{T:MomComp} and \ref{T2:MomComp} to have multiple-time comparison statements using either $g_1$ or $g_2$ or both. This proves \eqref{E:MomComp} and also Case (E-4) in Section \ref{S:Intro}.
\end{example}

\begin{example}
For $x\in\R_+^m$, let $g(x) = |x|^{\alpha}$, where $|x|=\sqrt{x_1^2+\cdots+x_m^2}$ and $\alpha\ge2$. Because

\[
D_i g(x) = \alpha x_i |x|^{\alpha-2}\quad\text{and}
\quad |\bigtriangledown g(x)|= \alpha |x|^{\alpha-1}
\]
and 
\[
D_iD_j g(x) = 
\begin{cases}
\displaystyle
 \alpha |x|^{\alpha-4}\left(|x|^2 +(\alpha-2)x_i^2 \right) & i=j,\\
 \displaystyle
 \alpha(\alpha-2)x_ix_j |x|^{\alpha-4} & i\ne j,
\end{cases}
\]
we see that $g\in C_{p,+}^{2,v}(\R_+^m;\R_+)$.
Therefore, one can apply either Theorem \ref{T:MomComp} or Theorem \ref{T2:MomComp} using these functions to obtain the multiple-time comparison result in \eqref{E:C4}.
\end{example}
 
Finally, let us give one application of approximation results proved in this paper. Here we can give a straightforward proof of the weak sample path comparison principle, which was proved in \cite{CK14Comp,Mueller91} (for one dimensional case) and in \cite{CH18Comparison} for ($d$-dimensional case). 

\begin{theorem}[Weak sample path comparison principle]\label{T:WeakComp}
 Assume that $f$ satisfies Dalang's condition \eqref{E:Dalang} and the diffusion coefficient $\rho$ is globally Lipschitz continuous, which is not necessary to vanish at zero.
Let $u_1$ and $u_2$ be two solutions to \eqref{E:SHE}
with the initial measures $\mu_1$ and $\mu_2$ that satisfy \eqref{E:J0finite}, respectively. 
If $\mu_1 \le \mu_2$, then
\begin{equation}\label{E: W comp point}
\bbP \left(u_1(t,x)\leq u_2(t,x)\right)=1\,, \quad \text{for all}\  t \geq 0 \ \text{and}\  x\in \RR^d\,.
\end{equation} 
\end{theorem}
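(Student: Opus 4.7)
The strategy will be to mimic the chain of reductions from Section \ref{S:Aprox}, approximating SHE by interacting diffusions on the lattice, but applied simultaneously to both $u_1$ and $u_2$ with a common driving noise so that the ordering of initial data is preserved throughout. A pathwise comparison for the approximating SDE system will then transfer to $u_1\le u_2$ via $L^p(\Omega)$-convergence.

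First, for $\ell=1,2$, I will apply the three-step approximation procedure (Lemmas \ref{L:Approx}, \ref{L:ApproxLaplacian}, \ref{L:TwoStep}) to each $\mu_\ell$ using a common driving noise $\dot M$ and the same mollifiers. This yields strong solutions $u_\ell^{\epsilon,\delta}(t,[x]_\delta)$ to the interacting diffusion system \eqref{E:apprx2} such that
\[
\lim_{\delta\to 0^+}\lim_{\epsilon\to 0^+}\Norm{u_\ell^{\epsilon,\delta}(t,[x]_\delta)-u_\ell(t,x)}_p=0
\]
for every $p\geq 2$ (the precise order of limits is as in Section \ref{SS:Step4}). Crucially, every step is linear and monotone in the initial datum---convolution with $\psi_\epsilon$, convolution with the heat kernel $G(\epsilon,\cdot)$, and averaging by the discrete kernel $P^{\epsilon,\delta}_{ij}$---so the hypothesis $\mu_1\leq \mu_2$ transfers to the pointwise inequality $u_0^{(1)}(i\delta)\leq u_0^{(2)}(i\delta)$ for every $i\in\bbZ^d$, and the two approximating systems are driven by the same correlated Brownian motions $\{M_{i\delta}^\epsilon\}$.

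Second, I will prove a pathwise comparison for the interacting diffusion \eqref{E:SDE1}: if two strong solutions $U^{(1)},U^{(2)}$ are driven by the same noise and the same Lipschitz $\rho$ (not requiring $\rho(0)=0$), and if $u_0^{(1)}(i)\leq u_0^{(2)}(i)$ for every $i\in K$, then $U^{(1)}(t,i)\leq U^{(2)}(t,i)$ for every $t\geq 0$ and $i\in K$ almost surely. The key point is that $V(t,i):=U^{(1)}(t,i)-U^{(2)}(t,i)$ satisfies the homogeneous Lipschitz SDE
\[
\ud V(t,i)=\kappa\sum_{j\in K}p_{i,j}\bigl(V(t,j)-V(t,i)\bigr)\ud t+\bigl(\rho(U^{(1)}(t,i))-\rho(U^{(2)}(t,i))\bigr)\ud M_i(t),
\]
with $V(0,\cdot)\leq 0$ and $V(0,\cdot)\in\ell^2(K)$. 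Applying It\^o's formula to a Yamada--Watanabe smoothing $\varphi_n(V(t,i)^+)$ of $x\mapsto (x^+)^2$, summing over $i$ in a truncation $K_m\uparrow K$, and exploiting the dissipativity of the Markov generator via \eqref{E:Lambda} together with the Lipschitz bound $|\rho(U^{(1)})-\rho(U^{(2)})|\leq \Lip_\rho|V|$, one obtains (after passing $n,m\to\infty$) a Gronwall-type inequality for $\E[\Norm{V(t,\cdot)^+}_{\ell^2(K)}^2]$ starting from $0$; hence $V(t,\cdot)^+\equiv 0$ a.s.

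Combining the two ingredients, the pathwise inequality $u_1^{\epsilon,\delta}(t,[x]_\delta)\leq u_2^{\epsilon,\delta}(t,[x]_\delta)$ passes to the $L^p(\Omega)$ limit and yields $(u_1(t,x)-u_2(t,x))^+=0$ almost surely, which is exactly \eqref{E: W comp point}. The main technical hurdle will be the pathwise comparison step on the infinite lattice when $\rho(0)\neq 0$, as $U^{(\ell)}(t,\cdot)$ may fall outside $\ell^2(K)$; this is handled by controlling only the \emph{difference} $V$, whose homogeneous equation preserves $\ell^2$-integrability through the Picard-type estimates of Section \ref{SS:Existence}.
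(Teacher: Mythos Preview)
Your proposal is correct and follows essentially the same route as the paper's sketch: reduce to the lattice system via the approximation chain of Section~\ref{S:Aprox}, invoke a pathwise comparison for the interacting diffusions with ordered initial data, and pass to the $L^p(\Omega)$ limit. The only difference is that the paper outsources the lattice-level comparison to Theorem~1.1 of Gei\ss--Manthey \cite{GM94}, whereas you propose to prove it directly via the Yamada--Watanabe/Gronwall argument on $V^+$; your handling of the $\rho(0)\neq 0$ issue through the difference $V$ is exactly the mechanism encoded in the paper's remark that the equation for $v=u_2-u_1$ has a diffusion coefficient $\tilde\rho$ vanishing at zero.
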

\begin{proof}[Sketch of the proof]
Set $v=u_1-u_2$. Then $v$ satisfies a SHE similar to \eqref{E:SHE} with $\tilde{\rho}$ that satisfies $\tilde{\rho}(0)=0$. It suffices to show that $v(t,x)\ge 0$ a.s. for all $(t,x)$ fixed. 
As is shown in Section \ref{SS:Step4}, one can find
$v_{\epsilon_1,\epsilon_1',\epsilon_2}^\delta(t,[x]_\delta)$ such that 
\[
\lim_{\epsilon_1'\rightarrow 0_+}\lim_{\epsilon_1\rightarrow 0_+}
\lim_{\epsilon_2\rightarrow 0_+} \lim_{\delta\rightarrow 0_+}
\Norm{v(t,x)- v_{\epsilon_1,\epsilon_1',\epsilon_2}^\delta(t,[x]_\delta)}_p = 0,\quad\forall p\ge 2,\: t>0,\: x\in\R^d.
\]
On the other hand, $v_{\epsilon_1,\epsilon_1',\epsilon_2}^\delta(t,[x]_\delta)$ solves the infinite-dimensional SDE \eqref{E:SDE1} with $\rho$ replaced by $\tilde{\rho}$ and with nonnegative and nonvanishing initial data.
By Theorem 1.1 of Gei\ss\  and Manthey \cite{GM94}, we know that $v_{\epsilon_1,\epsilon_1',\epsilon_2}^\delta(t,[x]_\delta)\ge 0$ a.s. Therefore, this nonnegativity property will be passed to $v$ through the limit. 
\end{proof}

\section*{Acknowledgements}
\addcontentsline{toc}{section}{Acknowledgements}
% The authors thank two anonymous referees for their careful reading of the paper and numerous valuable suggestions. 
The first author would thank Roger Tribe for suggesting this problem and sharing his unpublished note \cite{Tribe13} during L.C.'s thesis defense in 2013. It has taken us several years to prepare several tools to solve this problem in this general setting.

\end{document}